\theoremstyle{plain}
\newtheorem{THM}{Theorem}[section]
 \newtheorem{Lemma}[THM]{Lemma}
 \newtheorem{Prop}[THM]{Proposition}
\theoremstyle{definition}
 \newtheorem{DEF}[THM]{Definition}
 \newtheorem{Eg}[THM]{Example}
\DeclarePairedDelimiter\parens{\lparen}{\rparen}
\newcommand{\llaurent}{(\!(}
\newcommand{\rlaurent}{)\!)}
\DeclarePairedDelimiter\laurents{\llaurent}{\rlaurent}
\DeclarePairedDelimiter\bbrackets{\llbracket}{\rrbracket}
\newcommand{\Alg}[1]{#1\text{-}\mathrm{Alg}}
\newcommand{\Aut}[1]{\operatorname{Aut}{#1}}
\newcommand{\chp}[1]{\operatorname{ch}{\parens{#1}}}
\newcommand{\Conv}[1]{\operatorname{Conv}{\parens{#1}}}
\newcommand{\Hom}[2]{\operatorname{Hom}{\parens{#1, #2}}}
\newcommand{\HomA}[3]{\operatorname{Hom}_{#1}{\parens{#2,#3}}}
\newcommand{\id}[1]{\operatorname{id}_{#1}}
\newcommand{\Irr}[1]{\operatorname{Irr}{\parens{#1}}}
\newcommand{\res}[2]{\operatorname{res}_{#1}{\parens{#2}}}
\newcommand{\Sets}{\operatorname{Sets}}
\newcommand{\Spec}[1]{\operatorname{Spec}{#1}}
\newcommand{\tr}[1]{\operatorname{tr}{#1}}
\newcommand{\trres}[2]{\operatorname{tr}{\parens{#1 \mid #2}}}
\newcommand{\valp}[2]{\operatorname{val}_{#1}{\parens{#2}}}
\newcommand{\dimp}[1]{\operatorname{dim}{\parens{#1}}}
\newcommand{\ModA}[1]{\mathrm{Mod}_{#1}}
\newcommand{\BB}{\mathbb{B}}
\newcommand{\CC}{\mathbb{C}}
\newcommand{\GG}{\mathbb{G}}
\newcommand{\HH}{\mathbb{H}}
\newcommand{\RR}{\mathbb{R}}
\newcommand{\ZZ}{\mathbb{Z}}
\newcommand{\Aa}{\mathscr{A}}
\newcommand{\Ff}{\mathscr{F}}
\newcommand{\Gg}{\mathscr{G}}
\newcommand{\Oo}{\mathscr{O}}
\newcommand{\frakg}{\mathfrak{g}}
\newcommand{\frakh}{\mathfrak{h}}
\newcommand{\Gr}{\mathrm{Gr}}
\newcommand{\GrG}[1]{\Gr_{#1}}
\newcommand{\Rep}[2]{\mathrm{Rep}_{#1}\parens{#2}}
\newcommand{\Vect}[1]{\mathrm{Vec}_{#1}}
\DeclareSymbolFont{LMletters}{OML}{lmm}{m}{it}
\DeclareMathSymbol{\conv}{\mathbin}{LMletters}{63}
\title{The twining character formula for reductive groups}
\author{Jackson Hopper}
\begin{document}
 \maketitle
 
 \begin{abstract}
 Let $\widehat{G}$ be a connected reductive group over an algebraically closed field with a pinning-preserving outer automorphism $\sigma$.  Jantzen's twining character formula relates the trace of the action of $\sigma$ on a highest-weight representation $V_{\mu}$ of $\widehat{G}$ to the character of a corresponding highest-weight representation $(V_{\sigma})_{\mu}$ of a related group $\widehat{G^{\sigma, \circ}}$. This paper extends the methods of Hong's geometric proof for the case $\widehat{G}$ is adjoint, to prove that the formula holds for all connected reductive groups, and examines the role of additional hypotheses. In the final section, it is explained how these results can be used to draw conclusions about quasi-split groups over a non-Archimedean local field. This paper thus provides a more general geometric proof of the Jantzen twining character formula and provides some apparently new results of independent interest along the way.
 \end{abstract}
 
 \tableofcontents
 
 \bigskip
 
%
%
 
 Jantzen's twining character formula is a twisted version of the Weyl character formula. Given a pinning-preserving outer automorphism $\sigma$ on a connected, reductive group scheme $\widehat{G}$ over an algebraically closed field, the formula describes the twisted character of $\sigma$ on a highest-weight representation of $\widehat{G}$ in purely combinatorial terms, and can be calculated using the $\sigma$-action on the root datum of $\widehat{G}$. It was first proved by Jantzen, \cite{Ja73}, with alternative proofs provided by \cite{KLP09}, \cite{Ho09}, and \cite{CCH19}.
 
 Most of the above proofs share an assumption that the group $\widehat{G}$ is connected, semisimple, and adjoint, and some impose additional hypotheses. However, the proof of \cite{CCH19} holds for connected, reductive groups, following some cohomology calculations in \cite{Kos61}. In this paper I will make the same assumptions: that $\widehat{G}$ is a connected, and reductive group.
 
 I will follow quite closely the geometric proof of Hong \cite{Ho09}. I will outline the structure, then reproduce the proof in a fairly self-contained way to make clear where the stronger hypotheses might be convenient---and why they are unnecessary. For one thing, Hong's proof applies equally well as written whether $\widehat{G}$ is assumed to be adjoint or simply connected. For the more general case, Proposition \ref{Prop:no-circ} will be useful. Although Proposition \ref{Prop:no-circ} is not necessary to prove Theorem \ref{THM:main}, as shown in \cite{CCH19}, I believe it is interesting in its own right and have not seen it elsewhere in the literature.
 
 \bigskip
 \bigskip
 
 Let $\widehat{G}$ be a connected, reductive group over an algebraically closed field $K$ of characteristic $0$, and fix a root datum of $\widehat{G}$. In particular, fix a maximal torus and Borel $\widehat{T} \subset \widehat{B} \subset \widehat{G}$. Let $G$ be the complex group with dual root datum, and with a corresponding choice of maximal torus and Borel $T \subset B \subset G$. Let $\sigma$ be an automorphism of $\widehat{G}$ preserving the root datum and a pinning, and consider its induced action on $G$, which preserves the dual root datum and a pinning. Let $G^{\sigma}$ be the fixed-point subgroup of $G$, and let $G^{\sigma, \circ}$ be the neutral component of $G^{\sigma}$. Then $G^{\sigma, \circ}$ is a connected, reductive group \cite{St68} (see also \cite{Ha15}), and a closed subgroup of $G$, with maximal torus and Borel $T^{\sigma, \circ} \subset B^{\sigma, \circ} \subset G^{\sigma, \circ}$. The cocharacter lattice $X_*(T^{\sigma, \circ})$ is a subgroup of $X_*(T)$, and other components of the root datum of $G^{\sigma, \circ}$ can also be determined combinatorially. Let $\widehat{G^{\sigma, \circ}}$ be the dual of $G^{\sigma, \circ}$ over $K$.
 
 Note that $\sigma$ acts on the lattice of cocharacters of $T$ (i.e. characters of $\widehat{T}$). If $\mu$ is any $\sigma$-invariant dominant cocharacter of $G$, then we are interested in the action of $\sigma$ on the irreducible highest-weight representation $V_{\mu}$ of $\widehat{G}$, as well as the action of $\sigma$ on weight spaces $V_{\mu}(\lambda)$, where $\lambda$ is a nonzero weight of $V_{\mu}$. The set of such weights is denoted $Wt(\mu)$. Up to a scalar, there is a unique vector space automorphism $\sigma: V_{\mu} \to V_{\mu}$ commuting with the action of $\widehat{G}$ on $V_{\mu}$. We can normalize this automorphism by assuming $\sigma$ fixes the highest-weight line $V_{\mu}(\mu)$, uniquely determining an action of $\sigma$. The irreducible representation of $\widehat{G^{\sigma, \circ}}$ of highest weight $\mu$ is denoted $(V_{\sigma})_{\mu}$. Then we have the following theorem relating $V_{\mu}$ and $(V_{\sigma})_{\mu}$.

 \begin{THM}[Jantzen's twining character formula]
  \label{THM:main} 
  Let $\widehat{G}$, $G$, and $\sigma$ be as above. Let $\mu$ be a $\sigma$-invariant dominant character in $X^*(\widehat{T})$ and let $\lambda \in Wt(\mu)$ be a $\sigma$-invariant weight of $V_{\mu}$. Then $\sigma$ preserves $V_{\mu}(\lambda)$, and we have the following equality:
  \begin{equation}
   \label{eq:main}
   \trres{\sigma}{V_{\mu}(\lambda)} = 
   \dim{((V_{\sigma})_{\mu}(\lambda))}.
  \end{equation}
  
  The Weyl character formula for $\widehat{G^{\sigma, \circ}}$ thus implies a twining formula for the twisted character of $\sigma$:
  \[
   \sum_{\substack{\lambda \in Wt(\mu)\\ \sigma(\lambda) = \lambda}} \trres{\sigma}{V_{\mu}(\lambda)} e^{\lambda} =
   \chp{(V_{\sigma})_{\mu}} =
   \sum_{w \in W^{\sigma}} w \left(\prod_{\alpha \in N'_{\sigma} (\Phi)^+} \frac{1}{1 - e^{-\alpha}} \right) e^{w(\mu)}.
  \]
  Here $N'_{\sigma} (\Phi)$ is a root system explicitly determined by the $\sigma$-action on $\Phi$ and is the root system of the group $\widehat{G^{\sigma, \circ}}$.
 \end{THM}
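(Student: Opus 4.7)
The plan is to follow Hong's geometric strategy via the geometric Satake equivalence, extending it from the adjoint case to the general connected reductive case, with Proposition \ref{Prop:no-circ} allowing us to handle components uniformly. First, the preservation claim $\sigma(V_\mu(\lambda)) = V_\mu(\lambda)$ follows from $\sigma$-invariance of $\lambda$: for $v \in V_\mu(\lambda)$ and $t \in \widehat{T}$, one has $t \cdot \sigma(v) = \sigma(\sigma^{-1}(t) \cdot v) = \lambda(\sigma^{-1}(t)) \sigma(v) = \lambda(t) \sigma(v)$. For the main equality \eqref{eq:main}, I would use geometric Satake to realize $V_\mu$ as the intersection cohomology sheaf of the Schubert variety $\overline{\mathrm{Gr}_G^\mu}$ inside the affine Grassmannian $\mathrm{Gr}_G$, and the Mirkovic--Vilonen theorem to identify $V_\mu(\lambda)$ with the top Borel--Moore homology of $\overline{\mathrm{Gr}_G^\mu} \cap S_\lambda$, where $S_\lambda$ is the semi-infinite orbit indexed by $\lambda$, spanned by the fundamental classes of MV cycles. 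Since $\sigma$ preserves the pinning, its action on $G$ lifts to $\mathrm{Gr}_G$ and preserves both the Schubert stratification and the semi-infinite orbits; after normalizing on the highest weight line, this lift recovers the prescribed $\sigma$-action on $V_\mu$.

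Next, the trace of $\sigma$ on $V_\mu(\lambda)$ equals the number of $\sigma$-fixed MV cycles in $\overline{\mathrm{Gr}_G^\mu} \cap S_\lambda$: any MV cycle permuted nontrivially by $\sigma$ contributes zero to the trace on the top Borel--Moore homology, while each $\sigma$-stable MV cycle contributes $+1$, since $\sigma$ acts by $+1$ on the fundamental class of an irreducible $\sigma$-invariant variety (the only scalar compatible with the normalization and the order of $\sigma$). The crucial step is to identify the set of $\sigma$-fixed MV cycles with the set of MV cycles for $(\widehat{G^{\sigma,\circ}}, \mu, \lambda)$ inside $\mathrm{Gr}_{G^{\sigma,\circ}}$, using the geometric input that the neutral component of the fixed locus $(\mathrm{Gr}_G)^\sigma$ coincides with $\mathrm{Gr}_{G^{\sigma,\circ}}$. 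Applying Mirkovic--Vilonen to $\widehat{G^{\sigma,\circ}}$ then converts this count into $\dim{((V_\sigma)_\mu(\lambda))}$, yielding \eqref{eq:main}. The final character identity follows by summing \eqref{eq:main} against $e^\lambda$ over $\sigma$-invariant weights and applying the Weyl character formula to the representation $(V_\sigma)_\mu$ of the connected reductive group $\widehat{G^{\sigma,\circ}}$; the positive system $N'_\sigma(\Phi)^+$ is identified with the positive roots of $\widehat{G^{\sigma,\circ}}$ determined combinatorially from the $\sigma$-action on $\Phi$, producing the stated Weyl denominator.

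The principal obstacle is making the identification of $\sigma$-fixed MV cycles with MV cycles of $\widehat{G^{\sigma,\circ}}$ rigorous for a general connected reductive $\widehat{G}$, rather than the adjoint or simply connected case treated by Hong. In those restricted cases, convenient connectedness properties of the fixed locus streamline the argument; for general reductive $\widehat{G}$, the affine Grassmannian $\mathrm{Gr}_G$ has multiple components, and one must verify that $(\mathrm{Gr}_G)^\sigma$ and its neutral component $\mathrm{Gr}_{G^{\sigma,\circ}}$ interact correctly with both the Schubert and semi-infinite stratifications. Proposition \ref{Prop:no-circ} is designed precisely to bridge this gap, showing that the combinatorial count of fixed MV cycles is insensitive to passing to the neutral component, so that Hong's bijection extends to the full generality claimed. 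A secondary technical point is to check that the pinning-preserving hypothesis implies $\sigma$-stability of each $S_\lambda$ for $\sigma$-invariant $\lambda$ and that the compatibility of the MV basis with the $\sigma$-action is robust under the weaker hypothesis on $\widehat{G}$.
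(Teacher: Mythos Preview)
Your overall strategy matches the paper's, but there is a substantive gap in the eigenvalue step. You assert that ``after normalizing on the highest weight line, this lift recovers the prescribed $\sigma$-action on $V_\mu$'' and then that each $\sigma$-stable MV cycle contributes $+1$ as ``the only scalar compatible with the normalization and the order of $\sigma$.'' This is precisely where the nontrivial content of Section~\ref{S:Eigenvalues} lies. The geometric operator $\tilde{\sigma}_\mu$ on $\HH^\bullet(\GrG{G}, IC_\mu)$, built from pullback along $\sigma\colon\GrG{G}\to\GrG{G}$, does fix the fundamental classes of $\sigma$-invariant MV cycles; that much is a general fact about top compactly supported cohomology. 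But the operator $\hat{\sigma}_\mu$ appearing in the theorem is defined via the pinning-preserving automorphism $\hat{\sigma}$ of $\widehat{G}$, and Schur's lemma only identifies $\tilde{\sigma}_\mu$ with $\hat{\sigma}_\mu$ once one knows that the Tannakian automorphism $\tilde{\sigma}$ of $\widetilde{G}\cong\widehat{G}$ coincides with $\hat{\sigma}$. For that one must exhibit a pinning of $\widetilde{G}$ preserved by $\tilde{\sigma}$, and the delicate point is that when a simple coroot $\alpha_i^\vee$ is $\sigma$-fixed, $\tilde{\sigma}$ must fix the root subgroup $\widetilde{U}_{\alpha_i^\vee}$ pointwise. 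This is Lemma~\ref{Lemma:Lie}, proved by comparing $d\tilde{\sigma}$ with $\tilde{\sigma}_{\gamma^\vee}$ on the adjoint representation; your parenthetical suggests it follows from finite order and the highest-weight normalization alone, and it does not.

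A second point: you correctly flag the bijection between $\sigma$-invariant MV cycles in $\GrG{G}$ and MV cycles in $\GrG{G^{\sigma,\circ}}$ as the principal obstacle, but Proposition~\ref{Prop:no-circ} only says $\GrG{G^{\sigma}} \cong \GrG{G^{\sigma,\circ}}$; it does not by itself produce the bijection. The paper's route passes through stable AMV cycles, Kamnitzer's $\mathbf{i}$-Lusztig data for a $\sigma$-compatible reduced word of $w_0$ (Section~\ref{S:Lusztig}), and Anderson's moment-polytope criterion (Theorem~\ref{THM:Polytope}) to cut back down from AMV cycles to MV cycles. In particular, it is not automatic that the $\sigma$-fixed locus of a $\sigma$-invariant MV cycle is irreducible of the correct dimension, nor that the map $A\mapsto A^\sigma$ preserves the property of lying inside the Schubert variety $\overline{\GrG{G}^\mu}$; both directions use the polytope machinery. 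Also, a minor correction: by Proposition~\ref{Prop:Orbits}~\ref{Prop:Orbits:i} together with Proposition~\ref{Prop:no-circ}, the \emph{entire} fixed locus $(\GrG{G})^\sigma$, not merely its neutral component, is identified with $\GrG{G^{\sigma,\circ}}$.
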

 
 See Section \ref{S:Haines} for details on how to determine $N'_{\sigma} (\Phi)$ using the root datum of $\widehat{G}$. Notation and proofs there are drawn from \cite{Ha18}.
 
 There are two apparent justifications for the stronger hypotheses taken in previous proofs of Theorem \ref{THM:main}. First, unless $\widehat{G}$ is semisimple and either simply connected or adjoint, $G^{\sigma}$ may not be connected. This turns out to be immaterial, as the affine Grassmannian of $G^{\sigma}$ (and the category of sheaves on it) ``forgets'' any disconnectedness of $G^{\sigma}$, cf. Proposition \ref{Prop:no-circ}.
 
 Second, the root lattice of $\widehat{G}$ is a strict sublattice of $X^*(\widehat{T})$ in the case $\widehat{G}$ is not semisimple and adjoint. In this case the affine Grassmannian of $G$ is disconnected, and the dimensions of Schubert varieties vary in differing components. However, it turns out that, due to the use of Anderson's polytope calculus and normalization to stable AMV cycles, disconnectedness of the affine Grassmannian is also immaterial.
 
 In Section \ref{S:Notation}, I establish conventions used throughout the paper. In Section \ref{S:Hong}, I outline the proof of Theorem \ref{THM:main}, omitting some details, to see how the stronger hypotheses are used in the literature. Sections \ref{S:Orbits} through \ref{S:Eigenvalues} comprise a complete proof of Theorem \ref{THM:main}: Section \ref{S:Orbits} describes the $\sigma$-action on subvarieties of the affine Grassmannian of $G$, relating them to corresponding varieties in the affine Grassmannian of $\GrG{G^{\sigma}}$; Section \ref{S:Lusztig} considers the action of $\sigma$ on $\mathbf{i}$-Lusztig strata, establishing a condition for invariance; Section \ref{S:Gathering} establishes a coweight-preserving bijection between the $\sigma$-invariant MV cycles of $G$ and all MV cycles of $G^{\sigma, \circ}$; and Section \ref{S:Eigenvalues} completes the proof of Theorem \ref{THM:main} by showing that $\sigma$ fixes basis vectors corresponding to $\sigma$-invariant MV cycles, implying that the trace of $\sigma$ is exactly the number of preserved MV cycles. Section \ref{S:Haines} deals explicitly with root data and uses Theorem \ref{THM:main} to prove a Theorem 7.7, which stated in \cite{Ha18} without reference to a fully general proof. Finally, Appendix \ref{app} is a complete list of results from \cite{MV07} used in this paper.
 
 \section{Notation}
  \label{S:Notation}
  Here I will establish some notation and conventions. Throughout, $\widehat{G}$ is a connected, reductive group over an algebraically closed field $K$ of characteristic $0$. I primarily work with its complex dual group $G$. If I write ``character,'' ``coweight,'' or other similar term, without specifying which group I am referring to, I intend to refer to $G$.
   
  Fix a maximal torus and Borel $T \subset B \subset G$. We also have the corresponding maximal unipotent subgroup $U \subset B$. Let the set of simple roots be denoted $\Pi$ and the simple coroots denoted $\Pi^{\vee}$. Let the set of roots be denoted $\Phi$ and the set of coroots denoted $\Phi^{\vee}$. Let the character lattice be denoted $X^*(T)$ and the cocharacter lattice denoted $X_*(T)$.
  
  Fix also a maximal torus and Borel $\widehat{T} \subset \widehat{B} \subset \widehat{G}$, with corresponding maximal unipotent $\widehat{U}$. Then the character lattice for $\widehat{G}$ is exactly $X^*(\widehat{T}) = X_*(T)$, and the cocharacter lattice is $X_*(\widehat{T}) = X^*(T)$; the set of roots of $\widehat{G}$ is $\Phi^{\vee}$, the set of coroots is $\Phi$; the set of simple roots of $\widehat{G}$ is $\Pi^{\vee}$, and the set of simple coroots is $\Pi$.
   
  Let $G^{der}$ be the derived subgroup of $G$, and $G^{sc}$ the universal cover of $G^{der}$. Then $G^{sc}$ and $G^{der}$ have the same set of coroots and simple coroots as $G$, and there is a natural embedding $X_*(T^{sc}) \hookrightarrow X_*(T)$, with $X_*(T^{sc})$ generated by $\Pi^{\vee}$. Given any two cocharacters $\mu, \lambda \in X_*(T)$, we say $\lambda \le \mu$ if and only if $\mu - \lambda \in \ZZ_{\ge 0} \Pi^{\vee}$. Let $\rho$ be the half sum of positive roots of $G$ and $\rho^{\vee}$ be the half sum of positive coroots.
   
  Let $W$ be the Weyl group $N_G(T)/T$. Then $W$ acts on $X^*(T)$ and $X_*(T)$. For each $w \in W$, let $\lambda \le_w \mu$ if and only if $w^{-1}(\lambda) \le w^{-1}(\mu)$. Corresponding to the choice of simple roots $\Pi$, we have a set of simple reflections $S = \{s_{\alpha}\}_{\alpha \in \Pi}$ generating $W$, and $(W, S)$ is a Coxeter system. Then there is a length function $\ell$ on elements of $W$; let $w_0 \in W$ be the longest element.
  
  Fix a pinning of $G$ compatible with $T$ and $B$, i.e. a collection of root homomorphisms $x_{\alpha}: \GG_a \to U$ for each simple root $\alpha \in \Pi$. Each root homomorphism $x_{\alpha}$ also uniquely determines an opposite root homomorphism $y_{\alpha}: \GG_a \to w_0 U w_0^{-1}$. Fix also pinning $\{x_{\alpha^{\vee}}\}_{\alpha^{\vee} \in \Pi^{\vee}}$ of $\widehat{G}$ compatible with $\widehat{T}$ and $\widehat{B}$.
   
  Let $\sigma$ be an automorphism of $G$ preserving the pinning $\{x_{\alpha}\}_{\alpha \in \Pi}$, meaning that $\sigma$ preserves $T$ and $B$, and that $\sigma \circ x_{\alpha} = x_{\sigma(\alpha)}$ for all $\alpha \in \Pi$. Let $G^{\sigma}$ be the fixed point subgroup, and $G^{\sigma, \circ}$ the neutral component of that fixed point subgroup.
  
  Since $\sigma$ acts on the constituents of the root datum of $\widehat{G}$, there is a unique action of $\sigma$ on $\widehat{G}$ preserving its root datum and the pinning $\{x_{\alpha^{\vee}}\}$. Specifically, $\widehat{G}$ is generated by the images of cocharacters generating $X^*(T)$ and by the root and opposite root homomorphisms $x_{\alpha^{\vee}}$ and $y_{\alpha^{\vee}}$ for $\alpha^{\vee} \in \Pi^{\vee}$. It is thus sufficient to define $\sigma$ on these images. Let $R$ be a $K$-algebra, and suppose $g \in \widehat{G}(R)$. If $g = \lambda(t)$ for some $t \in \GG_a(R)$ and $\lambda \in X^*(T)$, let $\sigma(g) = \sigma(\lambda)(t)$. And if $g = x_{\alpha^{\vee}}(u)$ for some $u \in \GG_a(R)$, then let $\sigma(g) = x_{\sigma(\alpha^{\vee})}(u)$; similarly, if $g=y_{\alpha^{\vee}}(u)$, let $\sigma(g) = y_{\sigma(\alpha^{\vee})}(u)$.
   
  For a complex, smooth, linear algebraic group $H$ we have the loop group, positive loop group, negative loop group, and strictly negative loop group functors from $\CC$-algebras to sets given by $LH: R \mapsto H(R\laurents{\varpi})$, $L^+H: R \mapsto H(R\bbrackets{\varpi})$, $L^-H: R \mapsto H(R[\varpi^{-1}]) \subset LH(R)$, and $L^{--}H: R \mapsto \ker{(L^-H(R) \to H(R))}$, respectively. The \'etale sheafification of the quotient functor $LH/L^+H: R \mapsto H(R\laurents{\varpi})/H(R\bbrackets{\varpi})$ is known as $\GrG{H}$, the affine Grassmannian of $H$, and is representable by an ind-finite type (strict) ind-scheme over $\CC$. The ind-scheme $\GrG{H}$ is ind-projective if and only if $H$ is reductive (see, for instance, \cite{BD91} Theorem 4.5.1(iv)). For this reason, it is essential to this proof to assume $G$ is reductive.
  
  For each cocharacter $\nu \in X_*(T)$ we have by definition a homomorphism $\nu: \GG_m \to T \subset G$, as well as a homomorphism $\nu: L\GG_m \to LT$. Let $\varpi^{\nu} \in LT(\CC)$ be the image of $\varpi$ under this homomorphism, and let $\varpi^{\nu} x_0$ be the image of $\varpi$ under the composition 
  \[
   L\GG_m (\CC) \overset{\nu}{\to} 
   LT(\CC) \to
   LG(\CC) \to 
   \GrG{G}(\CC),
  \]
  where $x_0$ is the natural basepoint of $\GrG{G}(\CC)$, corresponding to the trivial coset in $LG(\CC)/L^+G(\CC)$. 
   
  Given a locally closed, reduced sub-ind-scheme $Y \subset \GrG{G}$, let $\overline{Y}$ be the reduced closure. If $G$ is reduced, then $\overline{Y}$ is ind-projective. $\GrG{G}$ has a Cartan stratification by $L^+G$-orbits. Given a cocharacter $\mu$, let $\GrG{G}^{\mu}$ be the $L^+G$-orbit $\GrG{G}^{\mu} = L^+G \cdot \varpi^{\mu}x_0$. I refer to these orbits as Schubert cells, and their closures as Schubert varieties. Schubert cells and Schubert varieties are reduced, finite-type, complex schemes. Typically $\mu$ will be taken dominant, since $\GrG{G}^{\mu} = \GrG{G}^{w(\mu)}$ for all $w \in W$. If $\mu$ is dominant, we have the following closure relations from the Cartan stratification:
  \[
   \overline{\GrG{G}^{\mu}} =
   \coprod_{\substack{\lambda \in X_*(T)^+ \\ \lambda \le \mu}} \GrG{G}^{\lambda}.
  \]
  We also have, for each $w \in W$, an Iwasawa stratification. The strata of the Iwasawa stratification are known as semi-infinite cells. In contrast with Schubert varieties, semi-infinite cells and their closures are not representable by schemes. Given $w$ and a cocharacter $\nu$, let $S_w^{\nu}$ be the orbit $S_w^{\nu} = wLUw^{-1} \cdot \varpi^{\nu}x_0$. Then we have the following closure relations (see eg \cite{MV07} Proposition 3.1(a)):
  \[
   \overline{S_w^{\nu}} =
   \coprod_{\substack{\eta \in X_*(T) \\ \eta \le_w \nu}} S_w^{\eta}
  \]
  
  From a geometric description of the complex points, we have an intersection criterion (see eg \cite{MV07} equation (3.5) in the proof of Theorem 3.2). That geometric description is
  \begin{equation}
   \label{eq:semi-infinite}
   S_w^{\nu}(\CC) =
   \{x \in \GrG{G}(\CC) \mid \lim_{s \to 0} w(\rho^{\vee}) (s) \cdot x = \varpi^{\nu} x_0\},
  \end{equation}
  where $w(\rho^{\vee}): \GG_m(\CC) \to T(\CC)$ is a homomorphism of complex groups. As a consequence of this description, $S_w^{\eta} \cap S_{w'}^{\nu} \ne \varnothing$ only if $\nu \le_w \eta$ and $\eta \le_{w'} \nu$. Indeed, if $p \in (S_w^{\eta} \cap S_{w'}^{\nu}) (\CC)$, then both $\varpi^{\eta} x_0$ and $\varpi^{\nu} x_0$ are in the closure of the $T(\CC)$-orbit of $p$. Since both $S_w^{\eta}$ and $S_{w'}^{\nu}$ are $T$-invariant, that means in particular that $\varpi^{\eta} x_0 \in \overline{S_{w'}^{\nu}}(\CC)$ and $\varpi^{\nu} x_0 \in \overline{S_w^{\eta}} (\CC)$, implying the inequalities.
  
  Given a reduced, irreducible, projective subvariety $X \subset \GrG{G}$, the sheaf $IC_X = j_{!*} (\CC[\dim{X}])$ is the unique perverse sheaf restricting to constant coefficients on the nonsingular locus of $X$. In the case $X = \overline{\GrG{G}^{\mu}}$ for a dominant cocharacter $\mu$, this sheaf is $L^+G$-equivariant and known simply as $IC_{\mu}$. The category of $L^+G$-equivariant perverse sheaves on closed subvarieties of $\GrG{G}$ consists of only direct sums of $IC_{\mu}$ for dominant $\mu$ and is referred to as $P_{L^+G} (\GrG{G})$.
 
 \section{Outline of proof}
  \label{S:Hong}
  Here I will summarize the proof, adapted from \cite{Ho09}, of Theorem \ref{THM:main}. I do this primarily to see that the hypothesis that $G$ is reductive sufficient. Suppose $G$ is a complex, semisimple, simply connected group, and let $\sigma$ be a pinning-preserving automorphism of $G$.
  
  The proof is geometric in nature, relying on the geometry of the affine Grassmannian $\GrG{G}$. Of particular importance is the geometric Satake equivalence, which constructs an explicit and canonical bijection between certain varieties contained in $\GrG{G}$, called MV cycles, and basis vectors of highest-weight representations of $\widehat{G}$. For precise statements of the several theorems from \cite{MV07} I am referring to when I say ``the geometric Satake equivalence,'' see the Appendix \ref{app}. The most important results, stated according to conventions from Section \ref{S:Notation}, are summarized here:
  
  \begin{THM}
    \label{THM:MV}
    Let $\mu$ be a dominant cocharacter, and let $\lambda \in Wt(\mu)$.
    \begin{enumerate}[label=\roman*.]
     \item
      \label{THM:MV:i}
      $S_{w_0}^{\lambda} \cap \overline{\GrG{G}^{\mu}}$ is equidimensional, and $\dimp{S_{w_0}^{\lambda} \cap \overline{\GrG{G}^{\mu}}} = \langle \rho, \mu - \lambda \rangle$
      
     \item
      \label{THM:MV:ii}
      $S_{w_0}^{\lambda} \cap S_e^{\mu}$ is equidimensional, and $\dimp{S_{w_0}^{\lambda} \cap S_e^{\mu}} = \langle \rho, \mu - \lambda \rangle$
     
     \item
      \label{THM:MV:iii}
      $\displaystyle \HH^{\bullet}(\GrG{G}, IC_{\mu}) = \bigoplus_{\lambda \in Wt(\mu)} H_c^{-2\langle \rho, \lambda\rangle} (S_{w_0}^{\lambda}, IC_{\mu}) =
       V_{\mu}
       $
      
     \item
      \label{THM:MV:iv}
      $\displaystyle H_c^{-2\langle \rho, \lambda\rangle}(S_{w_0}^{\lambda}, IC_{\mu}) =
       \bigoplus_{A \in \Irr{S_{w_0}^{\lambda} \cap \overline{\GrG{G}^{\mu}}}} K[A] =
       V_{\mu}(\lambda).$
       
      \item
       \label{THM:MV:v}
       $P_{L^+G} (\GrG{G}, \ZZ)$ is isomorphic as a tensor category to $\Rep{\ZZ}{\widehat{G}}$. 
     \end{enumerate}
   \end{THM}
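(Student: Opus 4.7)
The plan is to derive the statement directly from \cite{MV07}, since it is essentially a consolidation of the major theorems proved there; my task is to unpack the geometric Satake machinery and cite or reprove the pieces in the conventions of Section \ref{S:Notation}. I would organize the work in the order (i) $\to$ (ii) $\to$ (iv) $\to$ (iii) $\to$ (v), since the cohomological statements rely on the dimension estimates, and the Tannakian statement (v) is logically last.

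For (i), I would first establish the upper bound $\dim(S_{w_0}^{\lambda}\cap\overline{\GrG{G}^{\mu}})\le\langle\rho,\mu-\lambda\rangle$ using the hyperbolic-localization/constructibility arguments of \cite{MV07}: the cocharacter $w_0(\rho^{\vee})$ contracts $S_{w_0}^{\lambda}$ onto $\varpi^{\lambda}x_0$, and by the intersection criterion derived from \eqref{eq:semi-infinite}, the possible $T$-fixed points in $S_{w_0}^{\lambda}\cap\overline{\GrG{G}^{\mu}}$ correspond to Weyl-conjugates of coweights $\le\mu$. Equidimensionality and the matching lower bound come from Braverman--Gaitsgory/MV's argument: the opposite semi-infinite cell $S_e^{\mu}$ meets $\GrG{G}^{\mu}$ in a single point, and by a dimension count on the fibers of the Iwasawa stratification one sees the intersection is pure. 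Statement (ii) then follows by noting $S_{w_0}^{\lambda}\cap S_e^{\mu}$ is open and dense in $S_{w_0}^{\lambda}\cap\overline{\GrG{G}^{\mu}}$, since any component with a generic point outside $S_e^{\mu}$ would force the dimension above to exceed $\langle\rho,\mu-\lambda\rangle$.

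Having (i)--(ii), I would define the weight functor $F_{\lambda}(\mathcal{F})=H_c^{-2\langle\rho,\lambda\rangle}(S_{w_0}^{\lambda},\mathcal{F})$ and verify, using the dimension estimates together with the parity-vanishing of $IC_{\mu}$ on semi-infinite slices, that $F_{\lambda}$ is exact on $P_{L^+G}(\GrG{G})$. The Cousin/hyperbolic-localization spectral sequence for the Iwasawa stratification of $\overline{\GrG{G}^{\mu}}$ then degenerates and identifies $\bigoplus_{\lambda}F_{\lambda}$ with total cohomology, giving (iii). For (iv), one observes that $H_c^{-2\langle\rho,\lambda\rangle}(S_{w_0}^{\lambda},IC_{\mu})$ is spanned by fundamental classes of the top-dimensional irreducible components of $S_{w_0}^{\lambda}\cap\overline{\GrG{G}^{\mu}}$, which is exactly $\Irr(S_{w_0}^{\lambda}\cap\overline{\GrG{G}^{\mu}})$---these are the MV cycles.

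The main obstacle is (v), the tensor equivalence. Here I would equip $P_{L^+G}(\GrG{G})$ with the convolution product via the twisted product $\GrG{G}\tilde{\times}\GrG{G}$ and the global/Beilinson--Drinfeld Grassmannian to obtain a commutativity constraint through the fusion product. The Tannakian reconstruction theorem then produces an affine group scheme $\check{G}$ from the fiber functor $\bigoplus_{\lambda}F_{\lambda}$. The genuinely delicate step is identifying $\check{G}$ with $\widehat{G}$: one must extract from the geometry the correct root datum, which in \cite{MV07} is accomplished by reducing to rank-one Levi subgroups using the compatibility of weight functors with constant-term functors along Levi embeddings, and then recognizing the result as the based root datum dual to that of $G$. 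Since this paper uses the equivalence as a black box, I would not reprove (v) but cite Appendix \ref{app}, flagging that the appendix records the precise statements from \cite{MV07} that are being invoked.
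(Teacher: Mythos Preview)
Your plan for (i), (iii), (iv), and (v) matches the paper's: these are direct citations of \cite{MV07} Theorems~3.2, 3.5, 3.6, Proposition~3.10, Corollary~13.2, and Theorem~12.1, and your sketch of the underlying mechanisms (hyperbolic localization, degeneration of the Iwasawa spectral sequence, Tannakian reconstruction) is accurate even if the paper simply cites rather than reproves.

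The gap is in your argument for (ii). You claim that $S_{w_0}^{\lambda}\cap S_e^{\mu}$ is open and dense in $S_{w_0}^{\lambda}\cap\overline{\GrG{G}^{\mu}}$, but neither containment holds: $S_e^{\mu}$ is not contained in $\overline{\GrG{G}^{\mu}}$ (it is infinite-dimensional), so $S_{w_0}^{\lambda}\cap S_e^{\mu}$ can and does have components lying entirely outside $\overline{\GrG{G}^{\mu}}$. Your density argument, even if it were correct, would at best give a lower bound on $\dim(S_{w_0}^{\lambda}\cap S_e^{\mu})$; it says nothing about equidimensionality or an upper bound for components outside the Schubert variety. The paper handles (ii) differently: it invokes the translation action $\nu\cdot S_w^{\eta}=S_w^{\eta+\nu}$, which is an isomorphism of ind-schemes, to replace $(\lambda,\mu)$ by $(\lambda+n\rho^{\vee},\mu+n\rho^{\vee})$ for large $n$, and then applies a lemma of Braverman--Gaitsgory (\cite{BG08}, Proposition~5(iii)) asserting that for sufficiently dominant $\mu$ one has literally $S_{w_0}^{\mu-\nu}\cap S_e^{\mu}=S_{w_0}^{\mu-\nu}\cap\GrG{G}^{\mu}$, reducing (ii) to (i). You should replace your density claim with this translation-plus-stabilization argument.
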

   
   The direct sum in statement \ref{THM:MV:iv} is indexed by irreducible components $A$ of the variety $S_{w_0}^{\lambda} \cap \overline{\GrG{G}^{\mu}}$. These irreducible components are the MV cycles. The heart of the proof of Theorem \ref{THM:main} is to establish a bijective correspondence between those MV cycles in $\GrG{G}$ which are invariant under the action of $\sigma$ and all MV cycles in $\GrG{G^{\sigma}}$.
   
   This is done in three steps. First, the collection of MV cycles is generalized to a larger collection of what are here called AMV cycles (see Section \ref{S:Orbits} for definition), after Anderson's polytope calculus \cite{An03}. Using Kamnitzer's indexing of AMV cycles by $\mathbf{i}$-Lusztig data (see Section \ref{S:Lusztig}), a convenient criterion for $\sigma$-invariance of an AMV cycle is found, as well as a procedure for finding the datum of a corresponding AMV cycle in $\GrG{G^{\sigma}}$. Second, a criterion of \cite{An03} for when an AMV cycle intersects generically with an MV cycle is applied to show that the restriction of this correspondence to MV cycles is also bijective. Finally, the eigenvalues of the $\sigma$-action on the $\widehat{G}$-representation $V_{\mu}$ are examined to ensure that the twisted character of $\sigma$ is exactly as expected, completing the proof of Theorem \ref{THM:main}.
   
   Based on the short description above, it is not obvious how the hypothesis that $G$ is semisimple is used. However, much of the literature explicitly makes this and other assumptions. 
   
   The two complications introduced by relaxing the hypotheses from semisimple and simply connected to reductive come in the form of two different disconnected spaces. First, if $G$ is not either semisimple or adjoint, then the fixed point subgroup $G^{\sigma}$ is not necessarily connected. Using the classification of reductive groups, it is more convenient to work with $G^{\sigma, \circ}$ than $G^{\sigma}$ whenever dealing with root data. However, it is easier, and in my opinion more natural, to relate the geometry of $(\GrG{G})^{\sigma}$ to $\GrG{G^{\sigma}}$ than to $\GrG{G^{\sigma, \circ}}$. Thankfully, the affine Grassmannians of $G^{\sigma}$ and $G^{\sigma, \circ}$ are isomorphic for all reductive $G$ (see Proposition \ref{Prop:no-circ}). So results including Proposition \ref{Prop:Orbits} and its consequences still go through without issue, as they may be applied to $\GrG{G^{\sigma}}$. But the group whose category of representations is isomorphic to $P_{L^+G^{\sigma}}(\GrG{G^{\sigma}})$ is $\widehat{G^{\sigma, \circ}}$, as needed for Theorem \ref{THM:main}.

   The second complication is that $\pi_0(\GrG{G}) = \pi_1(G)$. So if $G$ is not semisimple and simply connected, then $\GrG{G}$ is not connected. However, in the framework of AMV cycles introduced by \cite{An03}, this is not a complication at all. In fact, both Kamnitzer and Hong work  primarily with stable AMV cycles, which are AMV cycles translated by $X_*(T)$ to be contained in $\overline{S_e^0}$, which is itself contained in the neutral component of $\GrG{G}$. This does not rule out possible sticking points in passing between MV cycles, AMV cycles, stable AMV cycles and back to MV cycles, but every result solely relating to stable AMV cycles holds automatically for all reductive groups.
   
   And the result necessary to restrict the bijection on the level of AMV cycles to MV cycles is Theorem \ref{THM:Polytope}, which is proved again without a need for triviality $\pi_1(G)$. More details are presented in Section \ref{S:Orbits}.
   
   In short, considering reductive groups, rather than semisimple groups---much less simply connected groups---is no more complicated for the proof of Theorem \ref{THM:main}. Every potential obstacle is either immaterial or easily deflected. In particular, Proposition \ref{Prop:no-circ} and the preceding lemmas are the only results I had not previously found in the literature.

  \section{Action of $\sigma$ on orbits}
   \label{S:Orbits}
   For the remainder of the paper, suppose $G$ is a connected, reductive, complex group, and $\sigma$ is a pinning-preserving automorphism on $G$.
   
   Unlike in the case $G$ is semisimple and simply connected, we cannot count on $G^{\sigma}$ to be a connected group in general. This leads us to a choice: should we work with $G^{\sigma, \circ}$, to which the classification of connected, reductive groups applies, or should we work directly with $G^{\sigma}$, which has a simpler description relative to $G$? Thanks to the upcoming Proposition \ref{Prop:no-circ}, it is immaterial whether we work with $G^{\sigma}$ or $G^{\sigma,\circ}$. I work primarily with $G^{\sigma}$ for simplicity, and the final result will hold for $G^{\sigma, \circ}$. I will avoid referring to root datum of $G^{\sigma}$ where possible. Note, however, that $\Hom{\GG_m}{T^{\sigma}} = \Hom{\GG_m}{T^{\sigma, \circ}}$, so $X_*(T^{\sigma}) = X_*(T^{\sigma, \circ})$.
   
   First, we will need two lemmas relating loop groups of algebraic groups and their quotients.
   
   \begin{Lemma}
    \label{Lemma:quotient-L-pos}
    Let $G$ be an affine group scheme over a field $k$, and suppose $H \subset G$ is a smooth normal subgroup with affine quotient $G/H$. There is a natural isomorphism of functors $L^+G/L^+H \to L^+(G/H)$, where $L^+G/L^+H$ is the \'etale quotient.
   \end{Lemma}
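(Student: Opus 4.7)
The plan is to construct the natural comparison map and then verify that it is an isomorphism of étale sheaves by checking injectivity already on the level of presheaves and surjectivity after étale sheafification. The essential geometric input is that, since $H$ is smooth, flat, and normal in $G$ with affine quotient, the projection $\pi : G \to G/H$ is an $H$-torsor, étale-locally trivial over $G/H$.

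First I would construct the map. Functoriality of $L^+$ gives, for every $k$-algebra $R$, a homomorphism $L^+G(R) \to L^+(G/H)(R)$ whose kernel contains $L^+H(R)$; sheafifying the source yields a morphism $\phi : L^+G/L^+H \to L^+(G/H)$. For injectivity already on presheaves, suppose $g_1, g_2 \in G(R\bbrackets{\varpi})$ have equal image in $(G/H)(R\bbrackets{\varpi})$; the torsor identification $G \times_k H \xrightarrow{\sim} G \times_{G/H} G$, $(g, h) \mapsto (g, gh)$, produces a unique $h \in H(R\bbrackets{\varpi}) = L^+H(R)$ with $g_2 = g_1 h$, showing $g_1$ and $g_2$ represent the same class in $L^+G/L^+H$.

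Next I would establish surjectivity. Given $\bar{g} \in (G/H)(R\bbrackets{\varpi})$, pull $\pi$ back along $\bar{g}$ to obtain an $H$-torsor $P \to \Spec{R\bbrackets{\varpi}}$; it suffices to produce an étale-local section of $P$. Reducing modulo $\varpi$, the $H$-torsor $P_0 := P \otimes_{R\bbrackets{\varpi}} R$ over $\Spec R$ trivializes on some faithfully flat étale cover $R \to R'$, yielding a section $s_0 : \Spec R' \to P_0 \otimes_R R'$. I would then lift $s_0$ to a section $s$ of $P \otimes_{R\bbrackets{\varpi}} R'\bbrackets{\varpi} \to \Spec{R'\bbrackets{\varpi}}$ using that this map is smooth (since $H$ is) and that $(R'\bbrackets{\varpi}, (\varpi))$ is a complete, hence Henselian, pair, so that smooth morphisms admit lifts against Henselian thickenings; equivalently, I would lift successively modulo $\varpi^n$ by formal smoothness and take the inverse limit. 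Composing $s$ with the projection $P \to G$ produces $g \in L^+G(R')$ whose image in $L^+(G/H)(R')$ agrees with the restriction of $\bar{g}$.

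The main obstacle is this final lifting step: smoothness of $H$ is used essentially both to trivialize $P_0$ étale-locally on $\Spec R$ and to extend $s_0$ across $\Spec{R'\bbrackets{\varpi}}$. Once those are in hand, the injectivity and surjectivity statements combine to give the asserted natural isomorphism of étale sheaves.
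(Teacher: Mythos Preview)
Your proof is correct and follows essentially the same route as the paper's. The paper also checks that the kernel of $L^+G \to L^+(G/H)$ is exactly $L^+H$ (your injectivity on presheaves), and for surjectivity it first lifts the reduction $g_0 \in (G/H)(R)$ to $G(S)$ along an \'etale cover $R \to S$ and then extends to $G(S\bbrackets{\varpi})$ by successive infinitesimal lifts along the jet-group tower using formal smoothness of $G \to G/H$; your torsor phrasing and the Henselian-pair lifting are simply a cleaner packaging of the same two steps.
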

   
   In particular, if $H \subset G$ is a normal subgroup and both groups are reductive, then $L^+G/L^+H$ and $L^+(G/H)$ are canonically isomorphic group schemes.
   
   \begin{proof}
    Let the quotient map be denoted $q_0: G \to G/H$. Note that $L^+G = \varprojlim G^{(n)}$, where $G^{(n)}$ is the $n$th jet group $R \mapsto G(R[\varpi]/(\varpi^{n+1}))$. Similarly, $L^+(G/H) = \varprojlim (G/H)^{(n)}$. Hence the map $q: L^+G \to L^+(G/H)$ corresponds to the inverse system of morphisms
    \[
     \begin{tikzcd}
      \cdots \ar[r] &
      G^{(n)} \ar[r, "i_n"] \ar[d, "q_n"] &
      G^{(n-1)} \ar[r, "i_{n-1}"] \ar[d, "q_{n-1}"] &
      \cdots \ar[r] &
      G^{(1)} \ar[r, "i_1"] \ar[d, "q_1"] &
      G \ar[d, "q_0"]
       \\
      \cdots \ar[r] &
      (G/H)^{(n)} \ar[r, "j_n"] &
      (G/H)^{(n-1)} \ar[r, "j_{n-1}"] &
      \cdots \ar[r] &
      (G/H)^{(1)} \ar[r, "j_1"] &
      G/H
     \end{tikzcd}
    \]
    
    The proof of surjectivity will proceed inductively. For each $n \ge 1$, I will use surjectivity and formal smoothness of $q_{n-1}$ to prove that $q_n$ is surjective.
    
    Suppose $q_{n-1}$ is surjective, and let $g_n \in (G/H)^{(n)} (R)$, for a $k$-algebra $R$, with image $g_{n-1} \in (G/H)^{(n-1)} (R)$. By surjectivity of $q_{n-1}$, there is a lift $\tilde{g}_{n-1} \in G^{(n-1)} (S)$ lying over $g_{n-1}$, where $R \to S$ is an \'etale $k$-algebra homomorphism. Then by formal smoothness of $q_0$, there is a simultaneous lift in $S$-points $\tilde{g}_n \in G^{(n)} (S)$ of both $g_n$ and $\tilde{g}_{n-1}$. Indeed, $g_n$ corresponds to a morphism $\Spec{(R[\varpi]/(\varpi^{n+1}))} \to G/H$ (and by precomposition, to a morphism $\Spec{(S[\varpi]/(\varpi^{n+1}))} \to G/H$), and $\tilde{g}_{n-1}$ corresponds to a morphism $\Spec{(S[\varpi]/(\varpi^n))} \to G$ such that $q_0 \circ \tilde{g}_{n-1}$ is equal to $j_n(g_n)$ as a morphism $\Spec{(S[\varpi]/(\varpi^n))} \to G/H$. Then by the infinitesimal lifting property of formally smooth morphisms, there is a lift $\tilde{g}_n$ in the diagram below.
    
    \[
     \begin{tikzcd}
      \Spec{(S[\varpi]/(\varpi^n))} \ar[r, "\tilde{g}_{n-1}"] \ar[d] &
      G \ar[d, "q_0"]
       \\
      \Spec{(S[\varpi]/(\varpi^{n+1}))}  \ar[ur, dashed, "\tilde{g}_n"] \ar[r, "g_n"] &
      G/H
     \end{tikzcd}
    \]
    
    Now suppose $g \in L^+(G/H)(R)$. For each $n$ there is a corresponding element $g_n \in (G/H)^{(n)}(R)$. In particular, there is an element $g_0 \in (G/H)(R)$ with a lift in $S$-points $\tilde{g}_0 \in G(S)$, for some \'etale $R \to S$. By above, for each $n$ there is also a lift $\tilde{g}_n \in (G/H)^{(n)}(S)$ of $g_n$. These $\tilde{g}_n$ form an inverse system, and thus correspond to an element $\tilde{g} \in L^+G(S)$ lifting $g$. Thus $q$ is surjective. 
    
        Now consider the kernel of $q$. Let $g \in L^+G(R)$ for a $k$-alebra $R$, and suppose $q(g) = e \in L^+(G/H)(R)$. Then $g \in \ker{(q_0)}(R\bbrackets{\varpi}) = H(R\bbrackets{\varpi}) = L^+H(R)$. Similarly, for $g \in L^+H(R) = H(R\bbrackets{\varpi})$, we have $q(g)$ corresponds to the identity in $(G/H)(R\bbrackets{\varpi}) = L^+(G/H)(R)$, and so $g \in \ker{q}(R)$.
   \end{proof}

   \begin{Lemma}
    \label{Lemma:quotient-L}
    Suppose $G$ is an \'etale group scheme over a field $k$. Then there are canonical isomorphisms $L^+G \cong LG \cong G$.
   \end{Lemma}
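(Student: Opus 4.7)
The plan is to factor the canonical chain $G \to L^+G \to LG$, induced by the inclusions $R \hookrightarrow R\bbrackets{\varpi} \hookrightarrow R\laurents{\varpi}$, and show each of the two arrows is an isomorphism of functors separately. I will assume $G$ is of finite type over $k$ (as is the case for all the paper's applications), so that the structure theorem for étale $k$-algebras gives $\Oo(G) = \prod_i L_i$, a finite product of finite separable field extensions $L_i/k$.

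For $G \cong L^+G$, the idea is to exploit the infinitesimal lifting property of formal étaleness, in direct analogy with the inductive lifting argument used in Lemma \ref{Lemma:quotient-L-pos}. Writing $R\bbrackets{\varpi} = \varprojlim_n R[\varpi]/(\varpi^{n+1})$ and using that $G$ is of finite presentation, $G$ commutes with this inverse limit, so
\[
 L^+G(R) = G(R\bbrackets{\varpi}) = \varprojlim_n G(R[\varpi]/(\varpi^{n+1})).
\]
Each surjection $R[\varpi]/(\varpi^{n+1}) \to R$ is a nilpotent thickening, so formal étaleness makes the induced map $G(R[\varpi]/(\varpi^{n+1})) \to G(R)$ a bijection; the inverse system is thus constant at $G(R)$, and the limit collapses.

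The subtler step is $L^+G \cong LG$. A $k$-algebra morphism $\prod_i L_i \to S$ corresponds to an orthogonal decomposition $1 = \sum_i e_i$ of $1 \in S$ into idempotents together with $k$-algebra maps $L_i \to e_i S$ for each $i$. The first subtask is to show that the idempotents of $R\laurents{\varpi}$ coincide with the idempotents of $R\bbrackets{\varpi}$: using $\varpi$-adic completeness of $R\bbrackets{\varpi}$ together with the relation $e(1-e) = 0$, any idempotent of $R\laurents{\varpi}$ must (after clearing $\varpi$-denominators) already lie in $R\bbrackets{\varpi}$. With the idempotent data on both sides matched, the claim reduces to verifying $\HomA{k}{L}{R\bbrackets{\varpi}} = \HomA{k}{L}{R\laurents{\varpi}}$ for each finite separable extension $L/k$. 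Writing $L = k[x]/(f)$ with $f$ separable, this amounts to the assertion that every root of $f$ in $R\laurents{\varpi}$ already lies in $R\bbrackets{\varpi}$.

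The main obstacle is this last reduction, which requires care when $R$ has zero divisors, since a direct $\varpi$-adic valuation argument only applies when $R$ is a domain. I would handle the general case by reducing modulo each prime $\frakp \subset R$: in $(R/\frakp)\laurents{\varpi}$ the ordinary $\varpi$-adic valuation is well-defined, and a root of $f$ with negative valuation would produce a nonzero leading term of $f(x)$, contradicting $f(x) = 0$ (since $f$ is separable and in particular has nonzero leading coefficient). Aggregating these local conclusions across all primes of $R$ yields the desired statement in $R\laurents{\varpi}$ itself. Naturality in $R$ of each of these bijections is immediate from the constructions, giving the required isomorphisms of functors $L^+G \cong LG \cong G$.
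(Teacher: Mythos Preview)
Your proof of $G \cong L^+G$ via the inverse-limit description of $L^+G$ and formal \'etaleness is correct and in fact cleaner than the paper's handling of this half. The paper does not split the two isomorphisms: it argues directly that any $k$-algebra map $\phi: k[G] \to R\laurents{\varpi}$ already lands in $R$, first reducing to the case where $k[G]$ is a single separable field extension, then using that each nonzero $\phi(f)$ is a unit in $R\laurents{\varpi}$ satisfying a separable polynomial over $k$ to force the lowest $\varpi$-degree to be nonnegative, and finally an injectivity observation to descend from $R\bbrackets{\varpi}$ to $R$.

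Your argument for $L^+G \cong LG$, however, has a genuine gap at the ``aggregation'' step. Reducing modulo every prime $\frakp \subset R$ shows only that each negative-degree coefficient $a_n$ of your root $x$ lies in $\bigcap_{\frakp} \frakp$, the nilradical of $R$; it does not show $a_n = 0$. (The same issue underlies your idempotent sketch: from $e(1-e)=0$ one gets that the negative-degree coefficients of $e$ are nilpotent, not that they vanish.) The gap is reparable, and your own first step supplies the missing tool. The finitely many nilpotent elements $a_{n_0},\ldots,a_{-1}$ generate a nilpotent ideal $J \subset R$; modulo $J$ your root lies in $(R/J)\bbrackets{\varpi}$, giving a map $L \to (R/J)\laurents{\varpi}$. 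Formal \'etaleness of $L/k$ says this lifts \emph{uniquely} along the nilpotent surjection $R\laurents{\varpi} \twoheadrightarrow (R/J)\laurents{\varpi}$. One lift is your original $\phi$; another is obtained by first lifting into $R\bbrackets{\varpi}$ along $R\bbrackets{\varpi} \twoheadrightarrow (R/J)\bbrackets{\varpi}$ and then including into $R\laurents{\varpi}$. Uniqueness forces $\phi$ to factor through $R\bbrackets{\varpi}$. Running this argument with the full \'etale algebra $\prod_i L_i$ in place of a single $L$ also lets you bypass the idempotent discussion entirely.
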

   
   \begin{proof}
    Let $R$ be a $k$-algebra, and let $k[G] = \Gamma(G, \Oo_G)$; note $k[G]$ is a direct product of fields, all finite and separable over $k$. I will show $L^+G(R) = LG(R) = G(R)$. The respective $R$-point sets are equal to $\HomA{\Alg{k}}{k[G]}{R\bbrackets{\varpi}}$, $\HomA{\Alg{k}}{k[G]}{R\laurents{\varpi}}$, and $\HomA{\Alg{k}}{k[G]}{R}$. It is sufficient to show that for all $f \in k[G]$ and all homomorphisms $\phi: k[G] \to R\laurents{\varpi}$, we have $\phi(f) \in R$. I first show that $\phi(f) \in R\bbrackets{\varpi}$, and then assume $\phi: k[G] \to R\bbrackets{\varpi}$ to show $\phi(f) \in R$. 
    
    We may assume $k[G]$ is a field: since $f = (f_1, \ldots, f_r)$ and $\phi = (\phi_1, \ldots, \phi_r)$, where $k[G]$ is a product of $r$ fields, it is sufficient to prove $\phi_i(f_i) \in R$ for all $i$. Both $f$ and $\phi(f)$ are invertible and satisfy a separable, irreducible polynomial over $k$. Let $\phi(f) = (a_n \varpi^n)_{n \in \ZZ}$, where $a_n \in R$ for all $n$ and $a_n = 0$ for all sufficiently small $n$, and let $n_0$ be the smallest integer such that $a_{n_0} \ne 0$. Invertibility of $\phi(f)$ implies that $a_{n_0}$ is invertible in $R$, and in particular not nilpotent. Then algebraicity of $\phi(f)$ over $k$ implies that $n_0 \ge 0$. Therefore $\phi(f) \in R\bbrackets{\varpi}$.
    
    Now we may assume $\phi: k[G] \to R\bbrackets{\varpi}$. Since $k[G]$ is a field, $\phi$ must be injective, even after composition. In particular, the composition $k[G] \to R\bbrackets{\varpi} \to R$ injective, where the second map sends $\varpi \mapsto 0$. Therefore $\phi(k[G]) \subset R$, and so $G(R\laurents{\varpi}) = G(R\bbrackets{\varpi}) = G(R)$.
   \end{proof}
   
   \begin{Prop}
    \label{Prop:no-circ}
    Let $G^{\sigma}$ be a possibly disconnected, split, reductive group over a field $k$, and let $G^{\sigma, \circ}$ be the identity component. Then the natural map of functors
    \[
     \begin{tikzcd}
      \GrG{G^{\sigma, \circ}} \ar[r, "\eta"] &
      \GrG{G^{\sigma}}
     \end{tikzcd}
    \]
    is an isomorphism of \'etale sheaves over $k$.
  \end{Prop}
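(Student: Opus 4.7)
The plan is to reduce the claim to the two preceding lemmas by exploiting the short exact sequence
\[
1 \to G^{\sigma, \circ} \to G^{\sigma} \to \pi_0(G^{\sigma}) \to 1,
\]
in which $\pi_0(G^{\sigma}) := G^{\sigma}/G^{\sigma, \circ}$ is a finite étale group scheme over $k$, since $G^{\sigma,\circ}$ is smooth and normal with affine quotient. Applying Lemma \ref{Lemma:quotient-L-pos} to this sequence gives an isomorphism of étale sheaves $L^+G^{\sigma}/L^+G^{\sigma,\circ} \cong L^+\pi_0(G^{\sigma})$, and then Lemma \ref{Lemma:quotient-L} identifies $L^+\pi_0(G^{\sigma}) = L\pi_0(G^{\sigma}) = \pi_0(G^{\sigma})$. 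In particular, every $R$-point of $\pi_0(G^{\sigma})$ lifts, after some étale base change $R \to S$, to a point of $L^+G^{\sigma}(S)$; this is the key étale-local lifting property that drives everything below.

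Next I would check that $\eta$ is étale-locally surjective. Given a class in $\GrG{G^{\sigma}}(R)$ represented by $g \in LG^{\sigma}(R)$, functoriality of $L$ produces an image $\bar g \in L\pi_0(G^{\sigma})(R) = \pi_0(G^{\sigma})(R)$. By the lifting property just established, after an étale cover $R \to S$ there exists $h \in L^+G^{\sigma}(S)$ with the same image as $g$ in $\pi_0(G^{\sigma})(S)$. Then $gh^{-1}$ lies in the kernel of $LG^{\sigma} \to \pi_0(G^{\sigma})$, which is $LG^{\sigma,\circ}$, and it represents the same class as $g$ in $\GrG{G^{\sigma}}(S)$. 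Hence the class of $g$ lies in the image of $\eta$ over $S$.

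For étale-local injectivity, suppose $g_1, g_2 \in LG^{\sigma,\circ}(R)$ have the same image in $\GrG{G^{\sigma}}(R)$. Then after an étale cover $R \to S$ there is $h \in L^+G^{\sigma}(S)$ with $g_2 = g_1 h$, so $h = g_1^{-1} g_2$ lies in $LG^{\sigma,\circ}(S) \cap L^+G^{\sigma}(S)$. The image of $h$ under $L^+G^{\sigma} \to L^+\pi_0(G^{\sigma}) = \pi_0(G^{\sigma})$ agrees with the image of $h$ under $LG^{\sigma} \to L\pi_0(G^{\sigma}) = \pi_0(G^{\sigma})$, and the latter is trivial since $h \in LG^{\sigma,\circ}(S)$. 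Therefore $h$ lies in the kernel of $L^+G^{\sigma} \to L^+\pi_0(G^{\sigma})$, which by Lemma \ref{Lemma:quotient-L-pos} is exactly $L^+G^{\sigma,\circ}$. Hence $g_1$ and $g_2$ define the same class in $\GrG{G^{\sigma,\circ}}(S)$.

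The main conceptual obstacle is ensuring that the short exact sequence survives passage to loop and positive loop functors as a sequence of étale sheaves — precisely what the two preceding lemmas provide. Once the identification $L^+\pi_0(G^{\sigma}) = L\pi_0(G^{\sigma}) = \pi_0(G^{\sigma})$ is available, the remainder is a bookkeeping argument tracing elements through the square of kernels and cokernels; no additional smoothness or representability input is needed beyond what is already granted by $G^{\sigma,\circ}$ being reductive.
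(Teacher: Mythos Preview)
Your proof is correct and follows essentially the same approach as the paper: both arguments reduce to the two preceding lemmas to identify $L^+G^{\sigma}/L^+G^{\sigma,\circ}$ and $LG^{\sigma}/LG^{\sigma,\circ}$ with $\pi_0(G^{\sigma})$, and then trace an element $g \in LG^{\sigma}$ through this identification to find a correcting $h \in L^+G^{\sigma}$ with $gh^{-1} \in LG^{\sigma,\circ}$. Your injectivity argument, which deduces $LG^{\sigma,\circ} \cap L^+G^{\sigma} = L^+G^{\sigma,\circ}$ from the kernel statement in Lemma~\ref{Lemma:quotient-L-pos}, is slightly more systematic than the paper's one-line presheaf check, but the content is the same.
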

   
  \begin{proof}
   Recall the affine Grassmannian is the \'etale sheafification of a presheaf $P\GrG{G^{\sigma}}: \Alg{k} \to \Sets$ defined by $R \mapsto LG^{\sigma}(R)/L^+G^{\sigma}(R)$. In order to prove that $\eta$ is an isomorphism of sheaves, it is sufficient to prove $\eta$ is both injective and surjective (as a map of sheaves).
   
   Injectivity of $\eta$ follows from injectivity of the the presheaf map $\eta^P: P\GrG{G^{\sigma, \circ}} \to P\GrG{G^{\sigma}}$. Let $R$ be an arbitrary $k$-algebra. Then the component map $\eta^P_R$ is injective. Indeed, an element $x \in P\GrG{G^{\sigma, \circ}}(R)$ can be written as a coset $x = gL^+G^{\sigma, \circ}(R)$, where $g \in LG^{\sigma, \circ}(R)$, and $\eta^P_R(x) = gL^+G^{\sigma}(R)$. This definition of $\eta^P_R$ makes sense because $LG^{\sigma, \circ} \subset LG^{\sigma}$ and is well-defined since $L^+G^{\sigma, \circ} \subset L^+G^{\sigma}$. The map $\eta^P_R$ is also injective. Indeed, if $g$ and $g'$ are two elements of $LG^{\sigma, \circ}(R)$ such that $gL^+G^{\sigma}(R) = g'L^+G^{\sigma}(R)$, let $h \in L^+G^{\sigma}(R)$ be any element such that $gh = g'$. Then in fact $h \in L^+G^{\sigma, \circ}(R)$, otherwise $gh \not\in LG^{\sigma, \circ}(R)$.
   
   In order to show that $\eta$ is surjective, I find it convenient to sheafify; the aim is to show that for all $k$-algebras $R$, and for all $x \in \GrG{G^{\sigma}}(R)$, there is some \'etale $k$-algebra morphism $R \to S$ such that $x$, viewed by restriction as a point in $\GrG{G^{\sigma}}(S)$, lifts to a point $\tilde{x} \in \GrG{G^{\sigma, \circ}}(S)$. To do so, consider the following diagram of \'etale sheaves, for which the rows are exact (as sheaves in pointed sets):
   \[
    \begin{tikzcd}
     1 \ar[r] &
     L^+G^{\sigma, \circ} \ar[r] \ar[d] &
     LG^{\sigma, \circ} \ar[r] \ar[d] &
     \GrG{G^{\sigma, \circ}} \ar[r] \ar[d, "\eta"] &
     1
      \\
     1 \ar[r] &
     L^+G^{\sigma} \ar[r] &
     LG^{\sigma} \ar[r] &
     \GrG{G^{\sigma}} \ar[r] &
     1
    \end{tikzcd}
   \]
   
   Let $x \in \GrG{G^{\sigma}}(R)$. By surjectivity of $LG^{\sigma} \to \GrG{G^{\sigma}}$, there is some lift $g \in LG^{\sigma}(S_1)$ of $x$, where $R \to S_1$ is \'etale. Then if we can find some $h \in L^+G^{\sigma}(S_2)$ such that $gh^{-1} \in LG^{\sigma, \circ}(S_2)$ (where $S_1 \to S_2$ is \'etale), then $\eta([gh^{-1}]) = x$. The reason we can find such $h$ (and $S_2$) is that the \'etale quotient functors $L^+G^{\sigma}/L^+G^{\sigma, \circ}$ and $LG^{\sigma}/LG^{\sigma, \circ}$ are isomorphic---in fact, they are isomorphic to the algebraic group $G^{\sigma}/G^{\sigma, \circ}$, so we can take $h \in G^{\sigma}(S_2)$. In particular, if $h' \in LG^{\sigma}(S_1)$ is any point with $g(h')^{-1} \in LG^{\sigma, \circ}(S_1)$, then $[h'] \in (LG^{\sigma}/LG^{\sigma, \circ})(S_1) = (G^{\sigma}/G^{\sigma, \circ})(S_1)$ lifts to $h \in G^{\sigma}(S_2)$ for some $S_2$.
   
   Use the previous two lemmas to see that the sheaves $L^+G^{\sigma}/L^+G^{\sigma, \circ}$ and $LG^{\sigma}/LG^{\sigma, \circ}$ really are isomorphic. The group $G^{\sigma, \circ}$ is reductive and a normal subgroup of the reductive group $G^{\sigma}$, and the quotient $G^{\sigma}/G^{\sigma, \circ}$ is \'etale. Thus by Lemma \ref{Lemma:quotient-L-pos}, $L^+G^{\sigma}/L^+G^{\sigma, \circ} \cong L^+(G^{\sigma}/G^{\sigma, \circ})$, and by Lemma \ref{Lemma:quotient-L}, $G^{\sigma}/G^{\sigma, \circ} \cong L^+(G^{\sigma}/G^{\sigma, \circ})$.
   
   It remains to be seen that $LG^{\sigma}/LG^{\sigma,\circ} \cong L(G^{\sigma}/G^{\sigma, \circ})$. Note that we have a natural map $LG^{\sigma} \to L(G^{\sigma}/G^{\sigma, \circ})$ by applying the loop group functor $L$ to the quotient map $q_0: G^{\sigma} \to G^{\sigma, \circ}$. By Lemma \ref{Lemma:quotient-L}, $L(G^{\sigma}/G^{\sigma, \circ}) \cong G^{\sigma}/G^{\sigma, \circ} \cong L^+(G^{\sigma}/G^{\sigma, \circ})$. Then the map $LG^{\sigma} \to L(G^{\sigma}/G^{\sigma, \circ})$ is surjective, since the map $L^+G^{\sigma} \to L^+(G^{\sigma}/G^{\sigma, \circ})$ factors through it, and is itself surjective by Lemma \ref{Lemma:quotient-L-pos}. And the kernel is $LG^{\sigma, \circ}$, for reasons essentially identical to those in the proof of Lemma \ref{Lemma:quotient-L-pos}: if the quotient map kills $g \in LG^{\sigma}(R)$, then $g$ corresponds to an element of $G^{\sigma}(R\laurents{\varpi})$ also killed by quotient, and thus $g \in G^{\sigma, \circ}(R\laurents{\varpi}) = LG^{\sigma, \circ}(R)$. And if $g \in LG^{\sigma, \circ}(R)$, then the quotient map kills $g$ when  viewed as an $R\laurents{\varpi}$-point of $G^{\sigma, \circ}$.

  \end{proof}
  
  It follows from the same reasoning that the natural map $G^{\sigma, \circ} \hookrightarrow G^{\sigma}$ induces an isomorphism of categories 
  \begin{equation}
   \label{eq:P-LG-GRG}
   P_{L^+G^{\sigma, \circ}} (\GrG{G^{\sigma, \circ}}) \overset{\sim}{\to}
   P_{L^+G^{\sigma}} (\GrG{G^{\sigma}}).
  \end{equation}
  Specifically, recall that the cocharacter lattices $X_*(T^{\sigma})$ and $X_*(T^{\sigma, \circ})$ are isomorophic. And for each cocharacter $\mu \in X_*(T^{\sigma})^+$, the map $\eta$ restricts to an isomorphism $\GrG{G^{\sigma}}^{\mu} \cong \GrG{G^{\sigma, \circ}}^{\mu}$. I do not directly use this fact, but I think it is worth acknowledging.
  
  I will prove equation (\ref{eq:main}) of Theorem \ref{THM:main} holds using $G^{\sigma}$. Then since the two categories in equation (\ref{eq:P-LG-GRG}) are isomorphic, they share a Tannakian dual group, which by Theorem \ref{THM:MV} \ref{THM:MV:v} has root datum dual to the connected reductive group $G^{\sigma, \circ}$, i.e. is isomorphic to the group $\widehat{G^{\sigma, \circ}}$.
   
   Much of the proof flows from an understanding of the relationship between the $\sigma$-action on semi-infinite cells, the $\sigma$-fixed sub-ind-scheme of a $\sigma$-invariant semi-infinite cell, and the corresponding semi-infinite cell of the $\sigma$-fixed point affine Grassmannian.
   
   \begin{Prop}
    \label{Prop:Orbits}
    \begin{enumerate}[label=\roman*.]
     \item
      \label{Prop:Orbits:i}
      There is a natural embedding of ind-schemes $\GrG{G^{\sigma}} \hookrightarrow \GrG{G}$, and $\GrG{G^{\sigma}}$ can be identified with $(\GrG{G})^{\sigma}$.
   
      \item
       \label{Prop:Orbits:ii}
       For $\mu \in X_*(T)^+$, $\sigma(\GrG{G}^{\mu}) = \GrG{G}^{\sigma(\mu)}$
       
      \item
       \label{Prop:Orbits:iii}
       For $\mu \in X_*(T)^{+, \sigma}$, we can identify $(\GrG{G}^{\mu})^{\sigma}=\GrG{G^{\sigma}}^{\mu}$.
       
      \item
       \label{Prop:Orbits:iv}
       For $\nu \in X_*(T)$ and $w \in W$, $\sigma(S_w^{\nu}) = S_{\sigma(w)}^{\sigma(\nu)}$
       
      \item
       \label{Prop:Orbits:v}
       For $\nu \in X_*(T)^{\sigma}$ and $w \in W^{\sigma}$, we can identify $(S_w^{\nu})^{\sigma} = (S_{\sigma})_w^{\nu}$, where $(S_{\sigma})_w^{\nu}$ is the semi-infinite cell $wL(U^{\sigma})w^{-1} \cdot \varpi^{\nu} x_0 \subset \GrG{G^{\sigma}}$.
     \end{enumerate}
   \end{Prop}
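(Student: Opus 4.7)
Parts (ii) and (iv) are direct orbit computations. Since $\sigma$ is a group automorphism of $G$ preserving $B$ (and hence $U$), it preserves $L^+G$ and $LU$ set-wise; it fixes the basepoint $x_0$, which corresponds to the trivial coset, and carries $\varpi^{\mu}$ to $\varpi^{\sigma(\mu)}$. So applying $\sigma$ to the orbit description $\GrG{G}^{\mu} = L^+G \cdot \varpi^{\mu}x_0$ (resp.\ $S_w^{\nu} = wLUw^{-1} \cdot \varpi^{\nu}x_0$) yields $\GrG{G}^{\sigma(\mu)}$ (resp.\ $S_{\sigma(w)}^{\sigma(\nu)}$), giving (ii) and (iv).

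For (i), the closed embedding $G^{\sigma} \hookrightarrow G$ of affine group schemes induces closed embeddings $L^+G^{\sigma} \hookrightarrow L^+G$ and $LG^{\sigma} \hookrightarrow LG$. Moreover, the tautological identity $(LH)^{\sigma}(R) = H(R\laurents{\varpi})^{\sigma} = H^{\sigma}(R\laurents{\varpi}) = L(H^{\sigma})(R)$ identifies these sub-ind-schemes with the $\sigma$-fixed sub-ind-schemes $(L^+G)^{\sigma}$ and $(LG)^{\sigma}$. Sheafifying the quotient yields a map $\eta\colon \GrG{G^{\sigma}} \to (\GrG{G})^{\sigma}$, and injectivity follows from the equality $L^+G \cap LG^{\sigma} = L^+G^{\sigma}$. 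Surjectivity is the key content: given a $\sigma$-fixed $R$-point $x$ of $\GrG{G}$, choose an \'etale lift $g \in LG(S)$; then $\sigma$-invariance of $x$ forces $g^{-1}\sigma(g) \in L^+G(S)$, and to realise $x$ as a point of $\GrG{G^{\sigma}}$ one must produce $h \in L^+G$ (after further \'etale base change) with $g^{-1}\sigma(g) = h\,\sigma(h)^{-1}$, so that $gh$ is a $\sigma$-fixed representative of $x$. This is a Lang--Steinberg assertion for the pro-smooth ind-scheme $L^+G$ under the finite-order automorphism $\sigma$ in characteristic zero, which I would prove by induction along the jet tower used in Lemma \ref{Lemma:quotient-L-pos}.

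Parts (iii) and (v) share a common template. The forward inclusions $\GrG{G^{\sigma}}^{\mu} \subseteq (\GrG{G}^{\mu})^{\sigma}$ and $(S_{\sigma})_w^{\nu} \subseteq (S_w^{\nu})^{\sigma}$ are immediate, since $\sigma$ fixes $\varpi^{\mu}x_0$ and $\varpi^{\nu}x_0$ by hypothesis on $\mu$ and on $(\nu, w)$, and the groups acting in each case are $\sigma$-stable. For the reverse inclusion in (iii), write a $\sigma$-fixed $y \in \GrG{G}^{\mu}$ as $y = g \cdot \varpi^{\mu}x_0$ with $g \in L^+G$; the $\sigma$-invariance of $y$ puts $g^{-1}\sigma(g)$ in the stabilizer $L^+G \cap \varpi^{\mu}L^+G\varpi^{-\mu}$, which is a $\sigma$-stable pro-smooth parahoric-like subgroup, and a Lang--Steinberg argument on this stabilizer produces $h$ with $g^{-1}\sigma(g) = h\,\sigma(h)^{-1}$, so that $gh^{-1} \in L^+G^{\sigma}$ still represents $y$. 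Part (v) is the parallel argument with $wLUw^{-1}$ in place of $L^+G$; it is actually simpler because the relevant stabilizer $wLUw^{-1} \cap \varpi^{\nu}L^+G\varpi^{-\nu}$ is pro-unipotent, for which Lang--Steinberg is automatic in characteristic zero.

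The principal obstacle is precisely the Lang--Steinberg reduction appearing in (i) and (iii): one must trivialise the $1$-cocycle $g^{-1}\sigma(g)$ in a pro-smooth $\sigma$-stable sub-ind-group of $L^+G$. I would attack this by the same inductive strategy that proved Lemma \ref{Lemma:quotient-L-pos}---treat one jet level at a time using classical Lang--Steinberg on each finite-type layer, then use formal smoothness of the transition maps to assemble compatible lifts into an element of $L^+G$ over an \'etale cover of $R$. Everything else in the proposition is formal manipulation of the orbit descriptions, turning on the observation that $\sigma$ preserves the key pieces $L^+G$, $LU$, $x_0$, and intertwines $\varpi^{\mu}$ with $\varpi^{\sigma(\mu)}$.
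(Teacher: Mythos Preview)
Your treatment of (ii) and (iv) matches the paper. Your approach to (v) via Lang--Steinberg on the pro-unipotent stabilizer $wLUw^{-1}\cap\varpi^{\nu}L^+G\varpi^{-\nu}$ is correct, though the paper argues differently: it produces a \emph{complement} $J_G(w,\nu):=wLUw^{-1}\cap\varpi^{\nu}L^{--}G\varpi^{-\nu}$ acting simply transitively on $S_w^{\nu}$, so that $S_w^{\nu}\cong J_G(w,\nu)$ as $\sigma$-spaces based at $\varpi^{\nu}x_0$; then $(S_w^{\nu})^{\sigma}=J_G(w,\nu)^{\sigma}\cdot\varpi^{\nu}x_0=J_{G^{\sigma}}(w,\nu)\cdot\varpi^{\nu}x_0=(S_{\sigma})_w^{\nu}$ is immediate, with no cohomological input at all.

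The real divergence is in (i) and (iii). Your plan is to run Lang--Steinberg directly in $L^+G$ (respectively in the stabilizer $L^+G\cap\varpi^{\mu}L^+G\varpi^{-\mu}$), arguing inductively along the jet tower as in Lemma~\ref{Lemma:quotient-L-pos}. This has a gap: those groups are not pro-unipotent---each has a reductive quotient ($G$, respectively a Levi)---so the $H^1$-vanishing that makes your argument work in (v) is not available at the bottom of the tower. Concretely, you need the specific cocycle $g^{-1}\sigma(g)\in L^+G$ to be a coboundary in $L^+G$, knowing only that it is a coboundary in $LG$; but $g\in LG$ has no image in $G$, so the jet induction cannot get started at level zero, and $H^1(\langle\sigma\rangle,G(\CC))$ need not vanish for general reductive $G$.

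The paper bypasses this entirely: it proves (v) first, and then deduces (i) and (iii) from (v). Since $\GrG{G}=\coprod_{\nu}S_e^{\nu}$ and $\sigma$ permutes these cells, any $\sigma$-fixed point lies in some $S_e^{\nu}$ with $\nu$ $\sigma$-invariant, hence by (v) in $(S_{\sigma})_e^{\nu}\subset\GrG{G^{\sigma}}$; this gives (i). Then (iii) is just $(\GrG{G}^{\mu})^{\sigma}=\GrG{G}^{\mu}\cap(\GrG{G})^{\sigma}$. So the logical order is reversed from your proposal: (v) is the engine, and the reductive-quotient obstruction you would face in (i) never arises.
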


   \begin{proof}
    Statements \ref{Prop:Orbits:ii} and \ref{Prop:Orbits:iv} are immediate.
   
    Note that $\sigma$ acts on $LG$, preserving $L^+G$. Thus we have, a priori, an action of $\sigma$ on $\GrG{G}$ and an injective map of functors $\GrG{G^{\sigma}} \to (\GrG{G})^{\sigma}$.

    Statements \ref{Prop:Orbits:i} and \ref{Prop:Orbits:iii} follow from statement \ref{Prop:Orbits:v}, along with the observation that for a sub-ind-scheme $X \subset \GrG{G}$, $X^{\sigma} = X \cap (\GrG{G})^{\sigma}$.
    
    To see statement \ref{Prop:Orbits:v}, consider the action of $wLUw^{-1}$ on $S_w^{\nu}$: there is a subgroup, $J_G(w, \nu) \subset wLUw^{-1}$, with a simply transitive action on $S_w^{\nu}$. Define $J_G(w, \nu)$ as follows:
    \[
     J_G(w, \nu) :=
     wLUw^{-1} \cap \varpi^{\nu}L^{--}G\varpi^{-\nu}.
    \]
    By construction of $J_G(w, \nu)$, it is clear that for $\sigma$-invariant $w$ and $\nu$, $J_G(w, \nu)^{\sigma} = J_{G^{\sigma}}(w, \nu)$. Therefore the $\sigma$-fixed points of $S_w^{\nu}$ are exactly those in the orbit of the $\sigma$-fixed subgroup $J_{G^{\sigma}}(w, \nu)$.
    
    So let us see that the action of the subgroup $J_G(w, \nu)$ is simply transitive on $S_w^{\nu}$, implying Statement \ref{Prop:Orbits:v}. 
%
It is well known that $LU$ has a decomposition $L^{--}U \cdot L^+U$. Since $L^{--}G \supset L^{--}U$ acts freely on $\GrG{G}$ at the basepoint $x_0$ with stabilizer $L^+G \supset L^+U$, this decomposition implies that $J_G(e, 0) = L^{--}U$ acts simply transitively on $S_e^0$. Similarly, we have $J_G(w, 0)$ acting simply transitively on $S_w^0$ for all $w \in W$.
    
    For more general $J_G(w, \nu)$, consider the decomposition
    \[
     wLUw^{-1} =
     J_G(w, \nu) \cdot (wLUw^{-1} \cap \varpi^{\nu} L^+G \varpi^{-\nu}).
    \]
    It is immediate both that this is a decomposition of $wLUw^{-1}$ (from normality of $wUw^{-1}$ in $wBw^{-1}$), and also that $wLUw^{-1} \cap \varpi^{\nu} L^+G \varpi^{-\nu}$ is the stabilizer of $\varpi^{\nu}x_0$ in $wLUw^{-1}$. Therefore $J_G(w, \nu)$ acts simply transitively on $S_w^{\nu}$, as needed.

   \end{proof}
   
   Note also that closure relations hold as expected, simply by intersection. Specifically, given a $\sigma$-invariant dominant cocharacter $\mu$,
   \[
    \overline{\GrG{G^{\sigma}}^{\mu}} =
    \overline{\GrG{G}^{\mu}} \cap \GrG{G^{\sigma}} =
    (\coprod_{\substack{\lambda \in X_*(T)^+ \\ \lambda \le \mu}} \GrG{G}^{\lambda}) \cap \GrG{G^{\sigma}} =
    \coprod_{\substack{\lambda \in X_*(T)^+ \\ \lambda \le \mu}} (\GrG{G}^{\lambda} \cap \GrG{G^{\sigma}}) =
    \coprod_{\substack{\lambda \in X_*(T)^{+,\sigma} \\ \lambda \le \mu}} \GrG{G^{\sigma}}^{\lambda},
   \]
   and given $\sigma$-invariant $\nu \in X_*(T)$ and $w \in W^{\sigma}$,
   \[
    \overline{(S_{\sigma})_w^{\nu}} =
    \overline{S_w^{\nu}} \cap \GrG{G^{\sigma}} =
    (\coprod_{\substack{\eta \in X_*(T) \\ \eta \le_w \nu}} S_w^{\eta}) \cap \GrG{G^{\sigma}} =
    \coprod_{\substack{\eta \in X_*(T) \\ \eta \le_w \nu}} (S_w^{\eta} \cap \GrG{G^{\sigma}}) =
    \coprod_{\substack{\eta \in X_*(T)^{\sigma} \\ \eta \le_w \nu}} (S_{\sigma})_w^{\eta}.
   \]
   Either of these equalities implies that the cocharacter lattice $X_*(T^{\sigma}) \subset X_*(T)$ inherits the partial order $\le$. This can also be seen combinatorially; see Section \ref{S:Haines} for details.
   
   The primary varieties in consideration in this proof are Mirkovi\'c--Vilonen (MV) cycles and Anderson--Mirkovi\'c--Vilonen (AMV) cycles. MV cycles of coweight $(\lambda, \mu)$ are irreducible components of the intersection $S_{w_0}^{\lambda} \cap \overline{\GrG{G}^{\mu}}$, and according to the geometric Satake correspondence they index a basis for $V_{\mu}(\lambda)$. On the other hand, AMV cycles of coweight $(\lambda, \mu)$ are irreducible components of the variety $\overline{S_{w_0}^{\lambda} \cap S_e^{\mu}}$. Many authors refer to AMV cycles as simply ``MV cycles.'' The following proposition will make clear how closely related they are, and why AMV cycles may be considered a generalization of MV cycles. Note that, for the purposes of the geometric Satake equivalence, it is not important whether we are dealing with an equi-dimensional variety or its closure. Indeed, we can use top-dimensional cohomology with compact support, which in this case depends only on dimension and number of components. However, it is (formally) convenient to require AMV cycles to be projective when defining their moment polytopes.
   
   \begin{Prop}[\cite{An03} Proposition 3]
    \label{Prop:Polytope}
    If $A$ is an irreducible component of $\overline{S_{w_0}^{\lambda} \cap S_e^{\mu}}$ and $A \subset \overline{\GrG{G}^{\mu}}$, then $A$ is the closure of an MV cycle of coweight $(\lambda, \mu)$. If $A'$ is an MV cycle of coweight $(\lambda, \mu)$, then $\overline{A'}$ is an irreducible component of $\overline{S_{w_0}^{\lambda} \cap S_e^{\mu}}$. Thus the closures of MV cycles of coweight $(\lambda, \mu)$ are exactly the AMV cycles of coweight $(\lambda, \mu)$ contained in $\overline{\GrG{G}^{\mu}}$.
  \end{Prop}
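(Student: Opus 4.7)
The plan is to identify a common open dense subset of any candidate MV cycle and its conjectural AMV-cycle closure, and then leverage the equidimensionality assertions of Theorem~\ref{THM:MV}. The preparatory observation is that $S_e^{\mu} \cap \overline{\GrG{G}^{\mu}} = S_e^{\mu} \cap \GrG{G}^{\mu}$: indeed, if some dominant $\lambda' \le \mu$ admits $S_e^{\mu} \cap \GrG{G}^{\lambda'} \ne \varnothing$, then $\mu$ must be a weight of $V_{\lambda'}$, which for dominant $\mu$ forces $\mu \le \lambda'$ and hence $\lambda' = \mu$. Thus $S_{w_0}^{\lambda} \cap S_e^{\mu} \cap \GrG{G}^{\mu}$ is simultaneously open in $S_{w_0}^{\lambda} \cap \overline{\GrG{G}^{\mu}}$ (since $\GrG{G}^{\mu}$ is open in its closure) and in $\overline{S_{w_0}^{\lambda} \cap S_e^{\mu}} \cap \overline{\GrG{G}^{\mu}}$ (since $S_{w_0}^{\lambda} \cap S_e^{\mu}$ is open in its own closure). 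This common open set is the hinge between the two notions of cycle.

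For the forward direction, let $A$ be an AMV cycle of coweight $(\lambda, \mu)$ with $A \subset \overline{\GrG{G}^{\mu}}$, and set $V_A = A \cap (S_{w_0}^{\lambda} \cap S_e^{\mu})$, which is open dense in $A$ by definition of $A$ as an irreducible component of $\overline{S_{w_0}^{\lambda} \cap S_e^{\mu}}$, and which lies in $S_{w_0}^{\lambda} \cap \GrG{G}^{\mu}$ by the preparatory step. Since $V_A$ is irreducible of dimension $\langle \rho, \mu - \lambda \rangle$ (Theorem~\ref{THM:MV}~\ref{THM:MV:ii}), while $S_{w_0}^{\lambda} \cap \overline{\GrG{G}^{\mu}}$ is equidimensional of the same dimension (Theorem~\ref{THM:MV}~\ref{THM:MV:i}), $V_A$ is contained in a unique irreducible component $A'$ of $S_{w_0}^{\lambda} \cap \overline{\GrG{G}^{\mu}}$, which is by definition an MV cycle of coweight $(\lambda, \mu)$; matching dimensions force $A = \overline{V_A} = \overline{A'}$ as closures in $\GrG{G}$.

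For the converse, let $A'$ be an MV cycle; the aim is to show that $A' \cap S_e^{\mu}$ is dense in $A'$, whereupon $\overline{A'} \subset \overline{S_{w_0}^{\lambda} \cap S_e^{\mu}}$ and a final dimension comparison makes $\overline{A'}$ an irreducible component of the latter, trivially contained in $\overline{\GrG{G}^{\mu}}$. This density step is the principal obstacle. A preliminary dimension count first rules out $A' \subset \overline{\GrG{G}^{\lambda'}}$ for any dominant $\lambda' < \mu$, showing $A' \cap \GrG{G}^{\mu}$ is open dense in $A'$. Irreducibility applied to the Iwasawa decomposition $A' \cap \GrG{G}^{\mu} = \coprod_{\nu} \bigl(A' \cap \GrG{G}^{\mu} \cap S_e^{\nu}\bigr)$ then isolates one $\nu$ whose piece is dense in $A'$; non-emptiness forces $\nu$ to be a weight of $V_{\mu}$, while the requirement $\dim(S_{w_0}^{\lambda} \cap S_e^{\nu}) \ge \langle \rho, \mu - \lambda \rangle$, combined with a standard dimension bound for semi-infinite intersections, yields $\langle \rho, \nu \rangle \ge \langle \rho, \mu \rangle$. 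Since $\mu$ is the unique weight of $V_{\mu}$ realizing the maximal pairing with $\rho$, $\nu = \mu$, as required.
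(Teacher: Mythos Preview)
Your proof is correct and uses the same core ingredients as the paper---the equidimensionality statements of Theorem~\ref{THM:MV} parts~\ref{THM:MV:i} and~\ref{THM:MV:ii} together with the Cartan and Iwasawa stratifications---but the converse direction is organized differently. The paper observes directly that $S_e^{\mu} \cap \overline{\GrG{G}^{\mu}}$ has dimension $2\langle\rho,\mu\rangle = \dim\overline{\GrG{G}^{\mu}}$, so is dense, giving $\overline{\GrG{G}^{\mu}} \subset \overline{S_e^{\mu}}$ in one stroke; then one Iwasawa stratification of $\overline{S_e^{\mu}}$ and a single dimension bound finish the job. Your route instead first passes to the open Schubert cell $\GrG{G}^{\mu}$ (ruling out lower $\overline{\GrG{G}^{\lambda'}}$ by dimension), then applies an Iwasawa decomposition inside $\GrG{G}^{\mu}$, and finally isolates $\nu = \mu$ via the maximality of $\langle\rho,\mu\rangle$ among weights of $V_{\mu}$. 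Both are valid; the paper's is shorter, while yours extracts a bit more information along the way (the density of $A' \cap \GrG{G}^{\mu}$ in $A'$). For the forward direction the two arguments are nearly interchangeable: the paper strips off higher $S_{w_0}^{\nu}$-strata from $\overline{S_{w_0}^{\lambda}}$, whereas you use your preparatory observation to land directly in $\GrG{G}^{\mu}$.
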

  \begin{proof}
    This result follows from dimension estimates in Theorem \ref{THM:MV} \ref{THM:MV:i} and \ref{THM:MV:ii}.
    
    First suppose $A$ is an irreducible component of $\overline{S_{w_0}^{\lambda} \cap S_e^{\mu}}$ and that $A \subset \overline{\GrG{G}^{\mu}}$. Of course we have $A \subset \overline{S_{w_0}^{\lambda}} \cap \overline{\GrG{G}^{\mu}}$. Now the Iwasawa stratification implies
    \[
     \overline{S_{w_0}^{\lambda}} \cap \overline{\GrG{G}^{\mu}} =
     \coprod_{\substack{\nu \in X_*(T) \\ \nu \ge \lambda}} (S_{w_0}^{\nu} \cap \overline{\GrG{G}^{\mu}}) =
     (S_{w_0}^{\lambda} \cap \overline{\GrG{G}^{\mu}}) \cup X,
    \]
    where $\dim{X} < \dim{A}$. And so $A' := A \cap (S_{w_0}^{\lambda} \cap \overline{\GrG{G}^{\mu}})$ is dense in $A$. Since $A$ is irreducible, this implies $A'$ is an MV cycle.
    
    Now suppose $A = \overline{A'}$ where $A'$ is an MV cycle of coweight $(\lambda, \mu)$. It is sufficient to see that $A' \subset \overline{S_{w_0}^{\lambda} \cap S_e^{\mu}}$. Note that by Theorem \ref{THM:MV:3.2}(a), 
    \[
     \dimp{S_e^{\mu} \cap \overline{\GrG{G}^{\mu}}} =
     \dimp{\overline{\GrG{G}^{\mu}}} =
     2\langle{\rho, \mu}\rangle,
    \]
    implying that $S_e^{\mu} \cap \overline{\GrG{G}^{\mu}}$ is dense in $\overline{\GrG{G}^{\mu}}$, and in particular, $\overline{\GrG{G}^{\mu}} \subset \overline{S_e^{\mu}}$. So $A' \subset S_{w_0}^{\lambda} \cap \overline{S_e^{\mu}}$. Again using the Iwasawa stratification, we have
    \[
     S_{w_0}^{\lambda} \cap \overline{S_e^{\mu}} =
     \coprod_{\substack{\nu \in X_*(T) \\ \nu \le \mu}} (S_{w_0}^{\lambda} \cap S_e^{\nu}) =
     (S_{w_0}^{\lambda} \cap S_e^{\mu}) \cup Y,
    \]
    where again $\dim{Y} < \dim{A'}$. And so $A' \cap (S_{w_0}^{\lambda} \cap S_e^{\mu})$ is dense in $A'$, and $A' \subset \overline{S_{w_0}^{\lambda} \cap S_e^{\mu}}$.
  \end{proof}
   
   Working with AMV cycles rather than MV cycles is convenient. The primary reason is that they are defined as components of a pair of semi-infinite cells, rather than components of a semi-infinite cell and a Schubert variety. One useful consequence is that $X_*(T)$ acts on the set of AMV cycles by translation, whereas a translation of an MV cycle is no longer necessarily an MV cycle.
   
   Given an AMV cycle $A$ and a cocharacter $\nu \in X_*(T)$, let $\nu \cdot A = \varpi^{\nu}A$. We have $\varpi^{\nu} S^{\lambda}_w = S^{\lambda + \nu}_w$ by normality of $wUw^{-1}$ in $wBw^{-1}$, so if $A$ has coweight $(\lambda, \mu)$, then $\nu \cdot A$ is an AMV cycle and has coweight $(\lambda + \nu, \mu + \nu)$. Given an AMV cycle $A$ of coweight $(\lambda, \mu)$, the $X_*(T)$-orbit of $A$ has one AMV cycle of coweight $(\lambda - \mu, 0)$. This AMV cycle is called the stable AMV cycle representing $A$, and denoted $A_0$. For many purposes, I will work with stable AMV cycles only, equivalent to assuming the second coweight is $0$. In these cases I will use the subscript ${}_0$. This is especially convenient for the consideration of non-simply-connected groups $G$, since stable AMV cycles are contained in the neutral component of the affine Grassmannian.
   
   The following theorem of Anderson is useful for determining which AMV cycles are MV cycles.
   
   \begin{THM}[\cite{An03} Theorem 1 (1)]
    \label{THM:Polytope}
    Let $G$ be a semisimple group over $\CC$. There exists a family of polytopes $\mathcal{MV} = (P_A)_{A \in \BB}$ in $X_*(T)_{\RR}$ with parameter set $\BB$ graded by $\Lambda^-$ (i.e. $\BB = \bigcup_{\nu \in \Lambda^-} \BB_{\nu}$) such that weight multiplicities may be calculated according to the following rule: If $V_{\mu}$ is an irreducible representation of $\widehat{G}$ with highest weight $\mu$, then the multiplicity of the weight $\lambda$ in $V_{\mu}$ equals the number of $A \in \BB_{\lambda-\mu}$ for which $P_A + \mu \subset \mathrm{Conv}(W \cdot \mu)$.
  \end{THM}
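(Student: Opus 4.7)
My plan is to take $\BB$ to be the set of stable AMV cycles, graded by coweight, and $P_A$ to be the moment polytope of $A$. The weight-multiplicity count then follows by chaining Theorem \ref{THM:MV}\ref{THM:MV:iv}, Proposition \ref{Prop:Polytope}, and the translation action of $X_*(T)$ on AMV cycles, with the containment in $\overline{\GrG{G}^\mu}$ translated into a polytope condition via the moment map on $\GrG{G}$.

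Concretely, for $\nu \in -\ZZ_{\ge 0}\Pi^\vee$ let $\BB_\nu$ be the set of stable AMV cycles of coweight $(\nu, 0)$, i.e.\ the irreducible components of $\overline{S_{w_0}^\nu \cap S_e^0}$; for $A \in \BB_\nu$ let $P_A \subset X_*(T)_\RR$ be the convex hull of those $\eta \in X_*(T)$ with $\varpi^\eta x_0 \in A$, a polytope whose extreme points include $\nu$ in the $w_0(\rho^\vee)$-direction. By Theorem \ref{THM:MV}\ref{THM:MV:iv}, $\dim V_\mu(\lambda)$ equals the number of MV cycles of coweight $(\lambda, \mu)$; by Proposition \ref{Prop:Polytope}, this equals the number of AMV cycles of coweight $(\lambda, \mu)$ contained in $\overline{\GrG{G}^\mu}$; and translation by $-\mu$ is a coweight-shift bijection from such AMV cycles to $\BB_{\lambda-\mu}$, sending $A' \mapsto \varpi^{-\mu} A'$. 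The theorem therefore reduces to the assertion that, for $A \in \BB_{\lambda-\mu}$, $\varpi^\mu A \subset \overline{\GrG{G}^\mu}$ if and only if $P_A + \mu \subset \Conv{W \cdot \mu}$.

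One direction is immediate: the $T$-fixed points of $\overline{\GrG{G}^\mu}$ are exactly $\{\varpi^\eta x_0 : \eta \in \Conv{W \cdot \mu} \cap (\mu + \ZZ\Pi^\vee)\}$, so the moment polytope of $\overline{\GrG{G}^\mu}$ equals $\Conv{W \cdot \mu}$, and containment of $\varpi^\mu A$ in $\overline{\GrG{G}^\mu}$ forces $P_A + \mu \subset \Conv{W \cdot \mu}$ by intersecting with the $T$-fixed locus. The main obstacle is the converse. Suppose $P_A + \mu \subset \Conv{W \cdot \mu}$. The irreducible, $T$-stable subvariety $\varpi^\mu A$ lies in some smallest Schubert variety $\overline{\GrG{G}^{\mu'}}$ with $\mu'$ dominant, and one must show $\mu' \le \mu$. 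The approach I would try is to show directly that $\mu' \in P_A + \mu$: the open stratum $\GrG{G}^{\mu'}$ meets $\varpi^\mu A$ in a dense open subset, and flowing a generic point of that intersection by a dominant one-parameter subgroup of $T$ produces the $T$-fixed limit $\varpi^{\mu'} x_0$, which by $T$-invariance and closedness of $\varpi^\mu A$ must lie in $\varpi^\mu A$ itself. Then $\mu' \in \Conv{W \cdot \mu}$ together with dominance of $\mu'$ forces $\mu' \le \mu$, giving $\overline{\GrG{G}^{\mu'}} \subset \overline{\GrG{G}^\mu}$ as needed. The delicate part is verifying that the $T$-limit of a generic point stays inside $\varpi^\mu A$ rather than escaping into a boundary stratum; as a fallback, one may appeal directly to Anderson's argument in \cite{An03}, which uses a Beilinson--Drinfeld-type degeneration of $\GrG{G}$ to transport the polytope condition into the Schubert condition.
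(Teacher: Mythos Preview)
Your overall strategy matches the paper's exactly: take $\BB_\nu$ to be the stable AMV cycles of coweight $(\nu,0)$, take $P_A$ to be the moment polytope, and reduce everything via Proposition~\ref{Prop:Polytope} and translation by $\mu$ to the biconditional ``$\varpi^\mu A \subset \overline{\GrG{G}^\mu}$ if and only if $P_A+\mu \subset \Conv{W\cdot\mu}$.'' The forward direction is handled identically.

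The gap is in your converse. Flowing a generic point $a \in \varpi^\mu A \cap \GrG{G}^{\mu'}$ by a dominant one-parameter subgroup of $T$ does \emph{not} produce $\varpi^{\mu'}x_0$. By the description~(\ref{eq:semi-infinite}), the limit $\lim_{s\to 0}\rho^\vee(s)\cdot a$ is $\varpi^\nu x_0$ for the unique $\nu$ with $a\in S_e^\nu$; since $\varpi^\mu A \cap \GrG{G}^{\mu'}$ is dense in the AMV cycle $\varpi^\mu A$, a generic point of that intersection lies in $S_e^\mu$, and you recover only $\varpi^\mu x_0$---the vertex you already knew. Your stated worry, that the limit might escape $\varpi^\mu A$, is not the issue: $\varpi^\mu A$ is projective and $T$-stable, so the limit certainly stays inside. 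The issue is that it lands at the wrong fixed point. More generally, the $T$-limits of points of $\varpi^\mu A$ under regular one-parameter subgroups are precisely the GGMS vertices $\nu_w$, and nothing forces any of these to lie in $W\cdot\mu'$.

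The paper repairs this with one extra ingredient: the dilation $\GG_m$-action $\varpi\mapsto c\varpi$, under which AMV cycles are also invariant. Lemma~\ref{Lemma:fp} shows that every $(\GG_m\times T)$-orbit $X\subset\GrG{G}$ has a $T$-fixed point $\varpi^\eta x_0\in\overline{X}$ with $X\subset\GrG{G}^\eta$; the point is that one first dilates into the finite-dimensional $G$-orbit $G\cdot\varpi^\eta x_0$ and only then flows by $T$. Applied to each $(\GG_m\times T)$-orbit in $\varpi^\mu A$, this produces for every $a$ an $\eta$ with $\varpi^\eta x_0\in\varpi^\mu A$ and $a\in\GrG{G}^\eta$; the polytope hypothesis then forces $\eta\in Wt(\mu)$, hence $a\in\overline{\GrG{G}^\mu}$. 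No Beilinson--Drinfeld degeneration is needed.
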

  
  Above, $X_*(T)_{\RR} = X_*(T) \otimes_{\ZZ} \RR$, $\BB_{\nu}$ is the set of irreducible components of $\overline{X(\nu, 0)}$, and $\Lambda^-$ is the set of negative coweights in $X_*(T^{sc})$, i.e. the negative coroot semilattice of $G$. And $P_A$ is the moment polytope of the MV cycle $A$, defined as follows:
  
  \begin{DEF}[Moment polytope]
   Suppose $X$ is an irreducible, projective, $T$-invariant subvariety $X \subset \GrG{G}$. Then define the moment polytope of $X$ as
   \[
    P_X :=
    \Conv{\nu \in X_*(T) \mid \varpi^{\nu} x_0 \in X}.
   \]
   
   This definition is inspired by the image of the moment map $\Phi: \GrG{G} \to X_*(T)_{\RR}$ of the action of $T$ on $\GrG{G}$. However, for our purposes, there is no need to define $\Phi$, only the image of $T$-invariant subvarieties.
  \end{DEF}
  
  Note that Schubert varieties, semi-infinite cells, and AMV cycles are all $T$-invariant. It is helpful to note some properties of the moment map and polytopes it produces:
  \begin{Prop}[\cite{An03} Proposition 4 and proof]
   \label{Prop:moment}
   \begin{enumerate}[label=\roman*.]
    \item
     \label{Prop:moment-i}
     The $T$-fixed points of $\GrG{G}$ are the $\varpi^{\nu}x_0$. Those in $\GrG{G}^{\mu}$ are the $\varpi^{W \cdot \mu}x_0$. Those in $\overline{\GrG{G}^{\mu}}$ are the $\varpi^{\nu}x_0$ where 
     \[
      \nu \in \Conv{W \cdot \mu} \cap (\mu + X_*(T^{sc})).
     \]
     The one in $S_w^{\nu}$ is $\varpi^{\nu} x_0$. Those in $\overline{S_w^{\nu}}$ are the $\varpi^{\eta} x_0$ where $\eta \le_w \mu$.
     
    \item
     \label{Prop:moment-ii}
     If $X$ is a one-dimensional $T$-orbit, then $P_{\overline{X}}$ is a line segment in a coroot direction joining two coweights in a common coset modulo $X_*(T^{sc})$.
     
    \item
     \label{Prop:moment-iii}
     If $X$ is any projective, irreducible, $T$-invariant variety, then $P_X$ is the convex hull of the images of its $T$-fixed points.
     
   \end{enumerate}
  \end{Prop}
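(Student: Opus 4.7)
The plan is to establish part \ref{Prop:moment-i} first, then deduce part \ref{Prop:moment-iii} as a formal consequence, with the bulk of the work concentrated in part \ref{Prop:moment-ii}, which requires a careful analysis of affine root subgroups in $LG$.

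For part \ref{Prop:moment-i}, the starting observation is that the $T$-fixed points of $\GrG{G}$ are exactly the $\varpi^{\nu} x_0$ for $\nu \in X_*(T)$. A coset $g L^+G(\CC) \in \GrG{G}(\CC)$ is $T$-fixed iff $g^{-1} T g \subset L^+G$, and the Cartan decomposition $LT(\CC) = \coprod_{\nu \in X_*(T)} \varpi^{\nu} L^+T(\CC)$ lets one choose a representative of the form $g = \varpi^{\nu}$. Specializing to $\GrG{G}^{\mu}$, the Cartan decomposition of $LG$ forces any $T$-fixed $\varpi^{\nu} x_0$ in the cell to have $\nu \in W \cdot \mu$, while conversely $\varpi^{w \mu} x_0 = \dot w \varpi^{\mu} \dot w^{-1} x_0$ lies in $L^+G \cdot \varpi^{\mu} x_0$. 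The closure relation $\overline{\GrG{G}^{\mu}} = \coprod_{\lambda \in X_*(T)^+,\, \lambda \le \mu} \GrG{G}^{\lambda}$ then yields the $T$-fixed set $\bigcup_{\lambda \in X_*(T)^+,\, \lambda \le \mu} W \cdot \lambda$, which by a standard lattice computation equals $\Conv{W \cdot \mu} \cap (\mu + X_*(T^{sc}))$. For the semi-infinite cells, the limit description in equation (\ref{eq:semi-infinite}) says that a $T$-fixed $x \in S_w^{\nu}$ satisfies $x = \lim_{s \to 0} w(\rho^{\v})(s) \cdot x = \varpi^{\nu} x_0$, giving a unique $T$-fixed point; and the Iwasawa closure relation from Section \ref{S:Notation} then supplies the stated description for $\overline{S_w^{\nu}}$.

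For part \ref{Prop:moment-ii}, a one-dimensional $T$-orbit has stabilizer a codimension-one subtorus, so $T$ acts on it via a nonzero character, which will turn out to be a root. To pin this down, I would use the affine root subgroups $U_{\alpha, n} = \{x_{\alpha}(u\varpi^n) : u \in \GG_a\} \subset LG$, indexed by $\alpha \in \Phi$ and $n \in \ZZ$, on which $T$ acts with weight $\alpha$. Any orbit of the form $T \cdot x_{\alpha}(u\varpi^n) \varpi^{\nu} x_0$ with $u \ne 0$ is one-dimensional, and a standard $SL_2$-identity applied in the affine $SL_2$-subgroup attached to $\alpha + n\delta$ shows that the limit as $u \to \infty$ is $\varpi^{\nu + n \alpha^{\v}} x_0$. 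The closure of such an orbit therefore has moment polytope the segment from $\nu$ to $\nu + n\alpha^{\v}$, which is in the coroot direction $\alpha^{\v}$ with endpoints in a common coset modulo $X_*(T^{sc})$ (since $n \alpha^{\v} \in X_*(T^{sc})$). The hard part, and the step I expect to be the main obstacle, is the converse: that every one-dimensional $T$-orbit arises this way. For this I would reduce to a semi-infinite chart $S_w^{\nu}$ containing the orbit and use Proposition \ref{Prop:Orbits} to identify the orbit inside $J_G(w, \nu) \cdot \varpi^{\nu} x_0$; the group $J_G(w, \nu)$ admits a factorization into affine root subgroups, and one-dimensionality of the $T$-orbit forces the point to lie in (a $T$-translate of) a single such subgroup.

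Part \ref{Prop:moment-iii} then follows formally from part \ref{Prop:moment-i}: the $T$-fixed points of a projective, irreducible, $T$-invariant $X \subset \GrG{G}$ are exactly the $\varpi^{\nu} x_0 \in X$, so the convex hull of their images under the cocharacter identification $\varpi^{\nu} x_0 \mapsto \nu$ is, by definition, $P_X$. The expected main obstacle is thus the exhaustiveness claim in part \ref{Prop:moment-ii}; the remaining parts are essentially bookkeeping using the stratifications already established in Section \ref{S:Notation} and Proposition \ref{Prop:Orbits}.
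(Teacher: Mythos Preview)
Your treatment of parts \ref{Prop:moment-i} and \ref{Prop:moment-iii} matches the paper's: both are dispatched as essentially formal, with \ref{Prop:moment-iii} being immediate from the definition of $P_X$ adopted here.

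For part \ref{Prop:moment-ii}, however, you take a genuinely different route. You aim to classify one-dimensional $T$-orbits structurally, via affine root subgroups $U_{\alpha,n}$ inside $J_G(w,\nu)$, and then read off the endpoints by an $SL_2$ computation. The paper instead never classifies such orbits at all: it simply locates, for each $w \in W$, the unique $\nu_w$ with $X \subset S_w^{\nu_w}$ (using the limit description (\ref{eq:semi-infinite})), observes that a one-dimensional orbit $X \cong \GG_m$ has exactly two limit points $\varpi^{\nu} x_0$ and $\varpi^{\eta} x_0$, and then runs a short Weyl-group argument: walking along a reduced word for $w_0$, find adjacent $w$ and $ws_i$ where the comparison between $\nu$ and $\eta$ flips. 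Then $w^{-1}(\nu - \eta)$ is a nonnegative coroot sum while $s_i w^{-1}(\nu - \eta)$ is nonpositive, forcing $w^{-1}(\nu - \eta) \in \ZZ_{>0} \alpha_i^{\vee}$. This sidesteps entirely the exhaustiveness problem you flag as the main obstacle.

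Your approach is workable, but note that the step ``one-dimensionality forces the point to lie in a single affine root subgroup'' is not quite right as stated: the point could lie in $LU_{\alpha} \cdot \varpi^{\nu} x_0$ with a Laurent series argument rather than a monomial $u\varpi^n$, and you would then need to argue that the limit as $\alpha(t) \to \infty$ is still governed by the leading term. This can be done, but the paper's combinatorial argument is both shorter and avoids any such case analysis.
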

  \begin{proof}
   Statement \ref{Prop:moment-i} is well known. Statement \ref{Prop:moment-iii} is an immediate consequence of the definition of the moment polytope used here.
   
   Suppose $X \subset \GrG{G}$ is a one-dimensional $T$-orbit. Let $x$ be a complex point $x \in X(\CC)$. By the Iwasawa stratification, for all $w \in W$ there is a unique $\nu_w$ for which $x \in S_w^{\nu_w}$. For each $w$, the set of $T$-fixed points in $\overline{S_w^{\nu_w}}$ is $\{\varpi^{\eta} x_0 \mid{}$ $\eta \le_w \nu_w\}$. So the set of $T$-fixed points in the intersection of $S_w^{\nu_w}$ for all $w \in W$ is contained in $\Conv{\nu_w \mid w \in W}$. In particular, $P_{\overline{X}} \subset \Conv{\nu_w \mid w \in W}$.
   
   In fact, $P_{\overline{X}} = \Conv{\nu_w \mid w \in W}$. Recall the geometric description of the semi-infinite cells (\ref{eq:semi-infinite}): $x \in S_w^{\nu} (\CC)$ if and only if
   \[
    \lim_{t \to 0} w(\rho^{\vee}) (t) \cdot x = \varpi^{\nu} x_0.
   \]
   The $\varpi^{\nu_w} x_0$ for $w \in W$ are therefore limit points for the torus action on $X$, and contained in $\overline{X}$.
   
   Since $X$ is a one-dimensional quotient of $T$, it must be isomorphic to $\GG_m$, and so $X(\CC)$ has at most two distinct limit points in $\GrG{G}(\CC)$. But if there is only one, call it $\nu$, then $X(\CC) \subset (S_{w_0}^{\nu} \cap S_e^{\nu}) (\CC)$, which consists of the single point $\varpi^{\nu} x_0$, violating the assumption that $X$ is one-dimensional. So let the two distinct cocharacters in the set $\{\nu_w \mid w \in W\}$ be denoted $\nu$ and $\eta$. Suppose $\nu = \nu_e$, so that $\nu \ge \eta$. Then also we have $\eta = \nu_{w_0}$.
   
   Find some $w \in W$ and simple reflection $s_i \in W$ such that $\nu \ge_w \eta$ but $\nu \le_{ws_i} \eta$, and note that then $\ell(w) < \ell(w s_i)$. Such $w$ and $s_i$ can be found by choosing a reduced word $\mathbf{i} = (i_1, i_2, \ldots, i_{\ell(w_0)})$ for $w_0$ and comparing $\nu$ and $\eta$ under the order $\le_{s_{i_1} \cdots s_{i_k}}$ for each $0 \le k \le \ell(w_0)$. Then $w^{-1}(\nu) \le w^{-1}(\eta)$, so $w^{-1}(\nu -\eta)$ is a sum of positive coroots; and $s_i w^{-1}(\nu) \ge s_i w^{-1}(\eta)$, so $s_i w^{-1}(\nu - \eta)$ is a sum of negative coroots. This is only possible when $w^{-1}(\nu - \eta) = n\alpha^{\vee}_i$ for some integer $n > 0$. And so $\nu - \eta$ is a multiple of the positive coroot $w(\alpha_i)$.
  \end{proof}
  
  I will sketch Anderson's proof of Theorem \ref{THM:Polytope} below, to see that it does not depend on the assumption $G$ is semisimple.

   Although $T$-invariance is the primary consideration for the moment map, it is convenient to also consider the ``dilation'' action of $\GG_m$ on $\GrG{G}$ defined by $\varpi \mapsto c \varpi \in R\laurents{\varpi}$ for $c \in \GG_m(R)$, since we can usefully describe a fixed point in the closure of a $(\GG_m \times T)$-orbit. Note also that Schubert varieties, semi-infinite cells, and AMV cycles are dilation-invariant as well as $T$-invariant. The following statement about the fixed points of torus actions is well-known. The proof in the current case is taken from Anderson.
   
   \begin{Lemma}[\cite{An03} Lemma 6]
    \label{Lemma:fp}
    Every $(\GG_m \times T)$-orbit $X \subset \GrG{G}$ has a $T$-fixed point $\varpi^{\eta} x_0 \in \overline{X}$, such that $X \subset \GrG{G}^{\eta}$.
   \end{Lemma}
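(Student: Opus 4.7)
The plan is to combine the $\GG_m$-dilation action with Borel's fixed-point theorem. First, I would observe that each Schubert cell is $(\GG_m \times T)$-invariant: the dilation sends $\varpi^\mu x_0$ to $(c\varpi)^\mu x_0 = \mu(c)\varpi^\mu x_0$, and since $\mu(c) \in T \subset L^+G$ commutes with $\varpi^\mu$ and fixes $x_0$, the basepoint $\varpi^\mu x_0$ is fixed, so $\GrG{G}^\mu = L^+G \cdot \varpi^\mu x_0$ is preserved by both $T$ and dilation. Consequently the orbit $X$ sits inside a single Schubert cell $\GrG{G}^\mu$, and $\overline{X}$ is projective, being contained in the projective Schubert variety $\overline{\GrG{G}^\mu}$.

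Next, I would use the dilation to produce a point of $\overline{X}$ lying in a controlled finite-dimensional subvariety. Writing an arbitrary $x \in X$ as $x = g \cdot \varpi^\mu x_0$ with $g \in L^+G(\CC)$, the dilation acts by $c \cdot x = g(c\varpi)\varpi^\mu x_0$. By projectivity of $\overline{X}$, the limit $\lim_{c \to 0} c \cdot x$ exists there, and an explicit computation (the power series $g(c\varpi)$ tends to its constant term $g_0 := g(0) \in G$) shows this limit equals $g_0 \varpi^\mu x_0 \in G \cdot \varpi^\mu x_0$. The $G$-orbit $G \cdot \varpi^\mu x_0 \subset \GrG{G}^\mu$ is isomorphic to the partial flag variety $G/P_\mu^-$, where $P_\mu^- = G \cap \varpi^\mu L^+G \varpi^{-\mu}$ is the parabolic subgroup of $G$ stabilizing $\varpi^\mu x_0$; in particular, $G \cdot \varpi^\mu x_0$ is a closed, $T$-invariant subvariety of $\GrG{G}$.

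Finally, I would apply Borel's theorem. The intersection $(G \cdot \varpi^\mu x_0) \cap \overline{X}$ is a non-empty (it contains the dilation limit), closed, $T$-invariant subvariety of the projective variety $\overline{X}$, hence itself projective. By Borel's fixed-point theorem, it contains a $T$-fixed point; since the $T$-fixed points of $G/P_\mu^-$ are exactly the finitely many points $\varpi^{w\mu}x_0$ for $w \in W$, this fixed point has the form $\varpi^\eta x_0$ with $\eta = w\mu$. The Weyl-orbit indexing of Schubert cells then gives $\GrG{G}^{\eta} = \GrG{G}^{\mu}$, so $X \subset \GrG{G}^\eta$ as required. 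The main technical obstacle is verifying that the $G$-orbit $G \cdot \varpi^\mu x_0$ really is closed in $\GrG{G}$ and that the dilation limit lands inside it; once this finite-dimensional reduction is secured, the rest of the argument is a routine application of Borel's theorem.
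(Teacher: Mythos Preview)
Your proposal is correct and follows essentially the same route as the paper's proof: locate $X$ inside a Schubert cell $\GrG{G}^{\mu}$, use the dilation limit to land in the closed $G$-orbit $G \cdot \varpi^{\mu} x_0 \subset \overline{X}$, and then extract a $T$-fixed point there. The only cosmetic difference is that the paper phrases the last step as ``take a $T$-fixed point in the closure of the $T$-orbit of the dilation limit'' rather than invoking Borel's theorem on the intersection $(G \cdot \varpi^{\mu} x_0) \cap \overline{X}$, but these are the same argument.
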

   
   Note that in the statement above, $\eta$ is not required to be dominant.
   
  \begin{proof}
   Suppose $X$ is a $(\GG_m \times T)$-orbit in $\GrG{G}$. By the Cartan stratification, $X$ must be contained in some $\GrG{G}^{\mu}$. There is a point $x \in \overline{X}(\CC) \cap (G(\CC) \cdot \varpi^{\mu} x_0)$, found as a limit of the dilation action. Then there is some fixed point $\varpi^{\eta} x_0$ in the closure of the $T(\CC)$-orbit of $x$. Since $T \subset G$, we still have $\varpi^{\eta} x_0 \in G(\CC) \cdot \varpi^{\mu} x_0$; in particular, $\eta$ is in the Weyl orbit $W \cdot \mu$. And since $X$ is both $\GG_m$- and $T$-invariant, both $x$ and $\varpi^{\eta} x_0$ are contained in $\overline{X}(\CC)$.
  \end{proof}
  
  Now we are ready to prove Theorem \ref{THM:Polytope}.
   
  \begin{proof}[Proof of Theorem \ref{THM:Polytope}.]
   First suppose $A$ is an MV cycle of coweight $(\lambda, \mu)$ dominant cocharacter $\mu$ and some $\lambda \in Wt(\mu)$. Then $A \subset \overline{\GrG{G}^{\mu}}$, and so $P_A \subset P_{\overline{\GrG{G}^{\mu}}} = \Conv{W \cdot \mu}$.
   
   Now suppose $A$ is an AMV cycle of coweight $(\lambda, \mu)$, and suppose $P_A \subset \Conv{W \cdot \mu}$. Then for every vertex $\nu$ of $P_A$ and each $w \in W$, we have $\nu \le_{w} w(\mu)$.  Since $A$ is $(\GG_m \times T)$-invariant, it is a union of $(\GG_m \times T)$-orbits. So every complex point $a \in A(\CC)$ is contained in such an orbit $a \in X(\CC) \subset A(\CC)$, and by Lemma \ref{Lemma:fp} there is a cocharacter $\eta$ such that $\varpi^{\eta} x_0 \in \overline{X}(\CC) \subset A(\CC)$ and $X \subset \GrG{G}^{\eta}$. In particular, $a \in \GrG{G}^{\eta}(\CC)$. By the assumption that $P_A \subset \Conv{W \cdot \mu}$, the difference $\mu - w(\eta)$ must be a nonnegative real combination of simple roots for each $w \in W$. And since $\varpi^{\eta} x_0 \in A(\CC)$, and $A$ is irreducible, it follows that all $w(\eta)$ are in the same coset modulo $X_*(T^{\sc})$ as the vertices $\nu$ of $P_A$---so in fact $\eta \in Wt(\mu)$. Therefore $a \in \overline{\GrG{G}^{\mu}}(\CC)$.
  \end{proof}
   
  The following observation summarizes the convenience of moment polytopes in studying the geometry of the affine Grassmannian via semi-infinite cells. It is a direct consequence of Anderson's work.
  
  \begin{Lemma}
   \label{Lemma:Polytope}
    Let $X$ be a $T$-invariant, projective, irreducible subvariety of $\GrG{G}$, and let $\nu \in X_*(T)$. Then $X \cap S_w^{\nu}$ is dense in $X$ if and only if $\nu$ is $\le_w$-maximal among vertices of the moment polytope $P_X$.
   \end{Lemma}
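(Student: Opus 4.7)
The plan is to combine the Iwasawa stratification of $X$ with its irreducibility to identify a unique open dense stratum, and then translate the index of that stratum into the $\le_w$-maximal vertex of $P_X$.

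First, I will apply the Iwasawa stratification, $X = \coprod_{\eta} (X \cap S_w^{\eta})$, and show that $X \cap S_w^{\eta} \ne \varnothing$ iff $\eta \in V := \{\eta \in X_*(T) \mid \varpi^{\eta} x_0 \in X\}$. One direction is trivial since $\varpi^{\eta} x_0 \in S_w^{\eta}$; the other uses the geometric limit description (\ref{eq:semi-infinite}) together with closedness of $X$ to see that $x \in X \cap S_w^{\eta}$ forces $\varpi^{\eta} x_0 = \lim_{s \to 0} w(\rho^{\vee})(s) \cdot x \in X$. Since $X$ is irreducible and projective it is contained in some Schubert variety $\overline{\GrG{G}^{\mu}}$, and Proposition \ref{Prop:moment} \ref{Prop:moment-i} then ensures $V$ is finite, so $P_X = \Conv{V}$ is a polytope with vertex set contained in $V$.

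Next, I will use the closure relation $\overline{S_w^{\eta}} = \coprod_{\eta' \le_w \eta} S_w^{\eta'}$ to conclude that $X \cap S_w^{\eta}$ is open in $X$ precisely when $\eta$ is $\le_w$-maximal in $V$. Two such distinct strata would be disjoint nonempty opens in the irreducible $X$, so there is a unique $\le_w$-maximum $\nu_0 \in V$, and $X \cap S_w^{\nu_0}$ is the unique open dense stratum; hence $X \cap S_w^{\nu}$ is dense iff $\nu = \nu_0$.

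The remaining step, and the one I expect to be the main obstacle, is the identification of $\nu_0$ with the $\le_w$-maximal vertex of $P_X$. Irreducibility of $X$ places all of $V$ in a single coset of the coroot lattice $X_*(T^{sc}) = \ZZ\Pi^{\vee}$; since $w(\Pi^{\vee})$ is a $\ZZ$-basis of that lattice, for $\eta, \eta' \in V$ the relation $\eta \le_w \eta'$ reduces to membership of $\eta' - \eta$ in the real cone $\RR_{\ge 0}\, w(\Pi^{\vee})$. Testing with the linear functional $\langle w\rho, -\rangle$, which is strictly positive on this cone, will exhibit $\nu_0$ as the unique maximizer of a strictly separating functional on $P_X$, hence a vertex that dominates every other vertex; conversely any $\le_w$-maximal vertex is $\le_w \nu_0$ and must equal $\nu_0$. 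The subtlety throughout is the interplay between the integral order $\le_w$ on $X_*(T)$ and the real convex geometry of $P_X$, and the coroot-coset condition forced by irreducibility of $X$ is exactly what reconciles the two.
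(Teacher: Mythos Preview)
Your argument is correct and uses the same core ingredients as the paper's proof: the Iwasawa stratification of $X$, finiteness of the strata (via projectivity), and irreducibility forcing a unique dense stratum. The organization differs slightly---you first isolate the unique $\le_w$-maximum $\nu_0\in V$ and then identify it with the $\le_w$-maximal vertex, whereas the paper argues the two implications separately---but the ideas are the same.

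One point worth noting: your ``remaining step'' (showing that the $\le_w$-maximum of $V$ is actually a \emph{vertex} of $P_X$, via the coroot-coset observation and the linear functional $\langle w\rho,-\rangle$) is precisely the step the paper glosses over. In the paper's backward direction, the assertion ``$\nu$ is $\le_w$-maximal among vertices of $P_X$, so by the Iwasawa stratification $X\subset\overline{S_w^{\nu}}$'' silently assumes that a $\le_w$-maximal vertex dominates every $\eta\in V$, which needs exactly the convexity argument you supply. So your treatment is in fact more complete on this point.
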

   
   \begin{proof}
    First suppose $X \cap S_w^{\nu}$ is dense in $X$. Then $X = X \cap \overline{S_w^{\nu}}$. So if $\eta \not{\le_w} \nu$, then $X \cap S_w^{\eta} = \varnothing$. In particular, $\varpi^{\eta}x_0 \not \in X$. And so $\eta \not \in P_X$, unless $\eta \not \in \nu + X_*(T^{sc})$, in which case $\eta$ cannot be a vertex of $P_X$.
    
    Now suppose $\nu$ is $\le_w$-maximal among vertices of $P_X$. By the Iwasawa stratification, $X \subset \overline{S_w^{\nu}}$. In particular,
    \[
     X =
     \coprod_{\eta \le_w \nu} (X \cap S_w^{\eta})
    \]
    and so
    \[
     X =
     \overline{\coprod_{\eta \le_w \nu} (X \cap S_w^{\eta})} =
     \bigcup_{\eta \le_w \nu} (\overline{X \cap S_w^{\eta}}),
    \]
    with the last equality holding since there are only finitely many semi-infinite cells intersecting $X$. By irreducibility and completeness of $X$, there is thus some $\eta \le_w \nu$ such that $X = \overline{X \cap S_w^{\eta}}$. But the only $\eta \le_w \nu$ with $\varpi^{\nu}x_0 \in S_w^{\eta}$ is $\eta = \nu$. Therefore $X = \overline{X \cap S_w^{\nu}}$.
   \end{proof}

  These results suggest the construction of GGMS strata: small, $T$-invariant subvarieties stratifying $\GrG{G}$, whose closures are sometimes AMV cycles. A GGMS stratum, named for Gelfand, Goresky, MacPherson, and Serganova, is an intersection of a sequence of semi-infinite cells indexed by $W$. A sequence $\nu_{\bullet} = (\nu_w)_{w \in W}$ of cocharacters such that $\nu_w \ge_w \nu_{w'}$ for all $w, w' \in W$ is known as a GGMS datum, and specifies the GGMS stratum
  \[
   A(\nu_{\bullet}) :=
   \bigcap_{w \in W} S_w^{\nu_w}.
  \]
  GGMS data are in bijection with pseudo-Weyl polytopes, or convex polytopes in $X_*(T)_{\RR}$ whose edges are in root directions, and whose vertices are cocharacters in a common coset modulo $X_*(T^{sc})$. Combining the information in Proposition \ref{Prop:moment}, we see that if $A$ is an AMV cycle, then $P_A$ is a pseudo-Weyl polytope, and the vertices of $P_A$ form a GGMS datum $\nu^A_{\bullet}$. By Lemma \ref{Lemma:Polytope}, the GGMS stratum $GGMS(A) := A(\nu^A_{\bullet})$ is dense in $A$, and is the minimal intersection of semi-infinite cells with this property.
   
   As a consequence of their construction as the intersections of semi-infinite cells, the $\sigma$-action on the set of GGMS strata adheres to the following dichotomy:
   
   \begin{Lemma}
    \label{Lemma:GGMS-dichotomy}
    \begin{enumerate}[label=\roman*.]
     \item 
      \label{Lemma:GGMS-dichotomy-i}
      A GGMS stratum $A(\nu_{\bullet})$ is $\sigma$-invariant if and only if the sequence of cocharacters $\nu_{\bullet}$ is $\sigma$-invariant, meaning $\sigma(\nu_w) = \nu_{\sigma(w)}$ for all $w \in W$. 
      
     \item 
      \label{Lemma:GGMS-dichotomy-ii}
      If $A(\nu_{\bullet})$ is not $\sigma$-invariant, then $A(\nu_{\bullet})^{\sigma} = \varnothing$.
    \end{enumerate}
   \end{Lemma}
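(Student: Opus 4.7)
The plan is to reduce the lemma to the $\sigma$-equivariance of semi-infinite cells from Proposition~\ref{Prop:Orbits}~\ref{Prop:Orbits:iv}, combined with the pairwise disjointness of the cells $\{S_w^{\nu}\}_{\nu \in X_*(T)}$ that make up the Iwasawa stratification for each fixed $w$. Concretely, applying $\sigma$ termwise to $A(\nu_{\bullet}) = \bigcap_{w \in W} S_w^{\nu_w}$ and reindexing by $w' = \sigma(w)$ yields
\[
 \sigma(A(\nu_{\bullet})) = \bigcap_{w \in W} S_{\sigma(w)}^{\sigma(\nu_w)} = A(\nu''_{\bullet}), \qquad \nu''_{w} := \sigma(\nu_{\sigma^{-1}(w)}).
\]
The equality $\nu''_{\bullet} = \nu_{\bullet}$ is exactly the condition $\sigma(\nu_w) = \nu_{\sigma(w)}$ for all $w \in W$, so the ``if'' direction of \ref{Lemma:GGMS-dichotomy-i} is immediate.

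For part \ref{Lemma:GGMS-dichotomy-ii}, I would suppose $p \in A(\nu_{\bullet})^{\sigma}$ and derive $\sigma$-invariance of $\nu_{\bullet}$. For each $w \in W$, $p \in S_w^{\nu_w}$, and applying $\sigma$ with $\sigma(p) = p$ places $p$ in $S_{\sigma(w)}^{\sigma(\nu_w)}$ as well. Since $p$ already lies in $S_{\sigma(w)}^{\nu_{\sigma(w)}}$, disjointness of semi-infinite cells indexed by distinct cocharacters forces $\sigma(\nu_w) = \nu_{\sigma(w)}$, which is the contrapositive of \ref{Lemma:GGMS-dichotomy-ii}. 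The same disjointness argument furnishes the ``only if'' of \ref{Lemma:GGMS-dichotomy-i}: if $A(\nu_{\bullet})$ is $\sigma$-invariant and nonempty, any $p \in A(\nu_{\bullet}) = A(\nu''_{\bullet})$ witnesses $p \in S_w^{\nu_w} \cap S_w^{\nu''_w}$ by the displayed formula, so $\nu_{\bullet} = \nu''_{\bullet}$; the empty case is vacuous.

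The content of the lemma is essentially a one-line consequence of Proposition~\ref{Prop:Orbits}~\ref{Prop:Orbits:iv} combined with the Iwasawa stratification, so there is no substantive obstacle. The mildest point to attend to is the vacuous case of an empty GGMS stratum in \ref{Lemma:GGMS-dichotomy-i}, which is handled by noting that emptiness makes $\sigma$-invariance of the stratum tautological and imposes no condition on $\nu_{\bullet}$.
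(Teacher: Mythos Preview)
Your proof is correct and follows essentially the same approach as the paper: both compute $\sigma(A(\nu_{\bullet})) = A(\nu''_{\bullet})$ with $\nu''_w = \sigma(\nu_{\sigma^{-1}(w)})$ via Proposition~\ref{Prop:Orbits}~\ref{Prop:Orbits:iv}, and then deduce both parts from the disjointness of the Iwasawa strata (which the paper packages as ``$\sigma$ permutes GGMS strata''). Your explicit flag of the empty-stratum edge case in part~\ref{Lemma:GGMS-dichotomy-i} is a fair observation that the paper itself leaves implicit.
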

   
   \begin{proof}
    First, note that $\GrG{G}$ is stratified by GGMS strata. Intersecting the Iwasawa stratifications of $\GrG{G}$ for all $w \in W$, we have
    \[
     \GrG{G} =
     \coprod_{\nu_{\bullet}} A(\nu_{\bullet}).
    \]
    where the disjoint union runs over all GGMS data $\nu_{\bullet}$.
    
    Then let $\nu_{\bullet} = (\nu_w)_{w \in W}$. We have
    \[
     \sigma(A(\nu_{\bullet})) =
     \sigma\left(\bigcap_{w \in W} S_w^{\nu_w}\right) =
     \bigcap_{w \in W} S_{\sigma(w)}^{\sigma(\nu_w)} =
     A(\sigma(\nu_{\bullet})),
    \]
    where $\sigma(\nu_{\bullet}) = (\sigma(\nu_{\sigma^{-1}(w)}))_{w \in W}$. Then $\sigma$ permutes GGMS strata, implying \ref{Lemma:GGMS-dichotomy-ii} And \ref{Lemma:GGMS-dichotomy-i} follows from observing that $\sigma(\nu_{\bullet}) = \nu_{\bullet}$ if and only if $\nu_w = \sigma(\nu_{\bullet})_w$ for each $w \in W$.
   \end{proof}
   
  \section{Indexing using $\mathbf{i}$-Lusztig data}
   \label{S:Lusztig}
   Unfortunately, not all GGMS strata are dense in some AMV cycle. In order to work with an indexing set where we can be sure the result is an AMV cycle, we use $\mathbf{i}$-Lusztig data. 
   
   Let $A$ be an AMV cycle with GGMS datum $\nu_{\bullet}$. If we walk along the edges of the moment polytope $P_A$ from $\nu_e$ to $\nu_{w_0}$ such that the vertices are indexed by Weyl elements of strictly increasing lengths, by recording the length of each edge we produce a sequence of $\ell(w_0)$ nonnegative integers $n_{\bullet} \in \ZZ_{\ge 0}^{\ell(w_0)}$ called the $\mathbf{i}$-Lusztig datum of $A$. This $\mathbf{i}$-Lusztig datum uniquely determines the stable AMV cycle $A_0$, where $\mathbf{i}$ is the corresponding reduced word for $w_0$. The $\mathbf{i}$-Lusztig strata are in a useful bijection with stable AMV cycles:
   
   \begin{THM}[\cite{Ka10} Theorem 4.2]
    \label{THM:Lusztig-index}
    Given fixed reduced word $\mathbf{i}$ for $w_0$, the set of $\mathbf{i}$-Lusztig data are in bijective correspondence with stable AMV cycles. 
    \[
     \ZZ_{\ge 0}^{\ell(w_0)} \leftrightarrow 
     \{\textnormal{stable AMV cycles}\}
    \]
   \end{THM}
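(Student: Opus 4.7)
The plan is to establish the bijection in two steps: first define a forward map from stable AMV cycles to $\mathbf{i}$-Lusztig data by reading off edge lengths along a distinguished path through the moment polytope, then produce an inverse by constructing, for each Lusztig datum, a stable AMV cycle realizing it.

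For the forward map, let $A_0$ be a stable AMV cycle with GGMS datum $\nu^A_\bullet$, write $\mathbf{i} = (i_1, \ldots, i_{\ell(w_0)})$, and set $u_k := s_{i_1} \cdots s_{i_k}$. I would first show that the vertices $\nu^A_{u_0}, \nu^A_{u_1}, \ldots, \nu^A_{u_{\ell(w_0)}}$ are consecutively joined by edges of $P_{A_0}$: the partial orders $\le_{u_k}$ and $\le_{u_{k+1}}$ differ only in the relative position of the simple coroot $\alpha_{i_{k+1}}^{\vee}$, which combined with Proposition \ref{Prop:moment} \ref{Prop:moment-ii} and the fact that all vertices lie in a common coset of $X_*(T^{sc})$ forces $\nu^A_{u_{k+1}} - \nu^A_{u_k}$ to be a nonnegative integer multiple of $u_k(\alpha_{i_{k+1}}^{\vee})$. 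Recording these integers defines the forward map $A_0 \mapsto n_{\bullet} \in \ZZ_{\ge 0}^{\ell(w_0)}$.

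For injectivity, I would use that the GGMS stratum $A(\nu^A_\bullet)$ is dense in $A_0$ by Lemma \ref{Lemma:Polytope}, so $A_0$ is determined by its vertex set $\nu^A_\bullet$. The Lusztig datum together with the stability normalization pins down $\nu^A_{u_k}$ for every $k$, and the remaining $\nu^A_w$ with $w \not\in \{u_0,\ldots,u_{\ell(w_0)}\}$ can in principle be recovered via two-dimensional face analysis: every adjacent pair $w, ws_i$ appears along some reduced word for $w_0$, and the two-dimensional faces of the polytope link different reduced words together. For surjectivity, I would construct an explicit candidate for each $n_{\bullet}$ using Lusztig's parameterization of $L^{--}U$: form the product map built from root subgroup coordinates for the positive roots enumerated by $\mathbf{i}$, with the $k$th factor chosen so that its leading $\varpi$-order reflects $n_k$, apply this to $x_0$, and take the closure.

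The main obstacle is showing that this construction actually produces an AMV cycle with the prescribed Lusztig datum: one must verify irreducibility, the dimension estimate of Theorem \ref{THM:MV} \ref{THM:MV:i}, and that the moment polytope has exactly the right edge lengths along $\mathbf{i}$. This is the heart of Kamnitzer's argument and draws on the combinatorics of canonical bases together with a careful analysis of how products of root subgroup elements interact with the limits $w(\rho^{\vee})(s) \cdot x$ defining the semi-infinite cells $S_e^{\mu}$ and $S_{w_0}^{\lambda}$. The off-path vertex reconstruction in the injectivity step encounters the same difficulty from a dual perspective: it is precisely the tropical Pl\"ucker relations of \cite{Ka10} that guarantee the vertices $\nu^A_w$ are forced by the edge data along any single reduced word.
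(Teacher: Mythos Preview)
Your outline is sound and would work, but the paper's argument is organized differently and avoids two of the harder steps you propose. Rather than proving injectivity of the forward map by reconstructing the full GGMS datum $\nu^A_\bullet$ from the edge lengths along $\mathbf{i}$ (which, as you note, requires the tropical Pl\"ucker relations), the paper defines for each $n_\bullet$ the partial intersection
\[
 A_0^{\mathbf{i}}(n_\bullet) := \bigcap_{0 \le k \le \ell(w_0)} S_{w_k^{\mathbf{i}}}^{\nu_k},
\]
shows via Lemma~\ref{Lemma:Polytope} that this is dense in any stable AMV cycle with $\mathbf{i}$-Lusztig datum $n_\bullet$, and then deduces both directions at once from irreducibility of $A_0^{\mathbf{i}}(n_\bullet)$. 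This sidesteps the off-path vertex recovery entirely: the paper never needs to know $\nu^A_w$ for $w$ not on the chosen gallery. For surjectivity, instead of arguing directly that the explicit construction hits every stable AMV cycle, the paper counts: the number of $\mathbf{i}$-Lusztig data of fixed coweight $(\nu,0)$ equals the Kostant partition function of $\nu$, which is also the number of stable AMV cycles of that coweight (since every stable AMV cycle becomes an MV cycle after a sufficiently dominant shift, and the Kostant partition function is the sharp upper bound on weight multiplicities).

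Both approaches ultimately rest on the same hard input, namely Kamnitzer's surjection $\mathbf{y}_{\mathbf{i}}: B(n_\bullet) \to A_0^{\mathbf{i}}(n_\bullet)$ from an irreducible ind-scheme, which gives irreducibility. Your route is closer in spirit to Kamnitzer's own exposition (polytope combinatorics first, geometry second); the paper's route is shorter because the counting argument and the density-in-closure trick replace the face analysis and the explicit verification of edge lengths for the constructed cycle.
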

   
   Note that since Theorem \ref{THM:Lusztig-index} concerns stable AMV cycles, there is no question that it applies to connected, reductive groups, and not just to simply connected, semisimple groups.
   
  \begin{proof}
   From the construction of the $\mathbf{i}$-Lusztig datum of an AMV cycle, it is sufficient to find an inverse mapping of $\mathbf{i}$-Lusztig data to stable AMV cycles. This can be done explicitly.
   
   Note $\Conv{W \cdot \rho^{\vee}}$ is a pseudo-Weyl polytope, and has $\mathbf{i}$-Lusztig datum $(1, \ldots, 1)$ for every reduced word $\mathbf{i}$ for $w_0$; indeed, for any two neighboring vertices $w(\rho^{\vee})$ and $ws_i(\rho^{\vee})$, the difference is a single coroot $w(\rho^{\vee} - s_i(\rho^{\vee})) = \pm w(\alpha^{\vee}_i)$. This polytope is sometimes known as the permutahedron. 
   
   One way to construct a subvariety of $\GrG{G}$ whose GGMS datum has a given $\mathbf{i}$-Lusztig datum $n_{\bullet}$ is to intersect semi-infinite cells correponding to the cocharacters encountered in the path from $\nu_e$ to $\nu_{w_0}$ corresponding to $\mathbf{i}$, the directions of which are determined by $\mathbf{i}$, and which can all be seen as the directions of edges of the permutahedron. It will turn out that this construction produces a collection of irreducible varieties in bijection with the $\mathbf{i}$-Lusztig data. 
  
    Fix $\mathbf{i} = (i_1, \ldots, i_{\ell(w_0)})$, a reduced word for $w_0$. Let $(w^{\mathbf{i}}_k)_{0 \le k \le \ell(w_0)}$ be the sequence of Weyl group elements corresponding to the the initial (as in leftmost, assuming $W$ acts from the left) $k$ letters of $\mathbf{i}$: $w^{\mathbf{i}}_0 = e$, $w^{\mathbf{i}}_k = w^{\mathbf{i}}_{k-1}s_{i_k}$, and $w^{\mathbf{i}}_{\ell(w_0)} = w_0$. 
    
     For $1 \le k \le \ell(w_0)$, let $\beta^{\mathbf{i},\vee}_k$ be the difference $w^{\mathbf{i}}_k(\rho^{\vee}) - w^{\mathbf{i}}_{k-1}(\rho^{\vee})$, which is the direction of an edge of the permutahedron. Then $\beta^{\mathbf{i}, \vee}_k$ is a negative coroot; specifically, $\beta^{\mathbf{i}, \vee}_k = -w^{\mathbf{i}}_{k-1}(\alpha_{i_k}^{\vee})$:
    \begin{align*}
     w^{\mathbf{i}}_k (\rho^{\vee}) - w^{\mathbf{i}}_{k-1} (\rho^{\vee}) &
     = w^{\mathbf{i}}_{k-1} (s_{i_k}(\rho^{\vee}) - \rho^{\vee})
      \\
     &
     = w^{\mathbf{i}}_{k-1} (s_{i_k}(\frac{1}{2} \sum_{\alpha^{\vee} \in \Phi^{\vee, +}} \alpha^{\vee}) - \frac{1}{2} \sum_{\alpha^{\vee} \in \Phi^{\vee, +}} \alpha^{\vee})
      \\
     &
     = w^{\mathbf{i}}_{k-1} (-\alpha_{i_k}^{\vee}),
    \end{align*}
    since $s_{i_k}$ permutes all positive coroots except for $\alpha^{\vee}_{i_k}$, which it transposes with $-\alpha^{\vee}_{i_k}$. Let $\nu_0 = 0$, and for each $k \ge 1$ let $\nu_k = \nu_{k-1} + \beta^{\mathbf{i}, \vee}_k$. Each $\nu_k$ is $\le_{w^{\mathbf{i}}_k}$-maximal in the sequence $(\nu_k)$, since $w^{\mathbf{i}}_k (\rho^{\vee})$ is $\le_{w^{\mathbf{i}}_k}$-maximal in $W \cdot \rho^{\vee}$. Then let
    \[
     A_0^{\mathbf{i}} (n_{\bullet}) := 
     \bigcap_{0 \le k \le \ell(w_0)} S_{w^{\mathbf{i}}_k}^{\nu_k}.
    \]
    Then $\overline{A_0^{\mathbf{i}}(n_{\bullet})}$ is a projective subvariety of $\overline{X(\nu_{\ell(w_0)}, 0)} = \overline{S_{w_0}^{\nu_{\ell{w_0}}} \cap S_e^0}$. Note also that the map
    \[
     \{n_{\bullet} \mid \mathbf{i}\text{-Lusztig data}\} \to
     \{A_0 \mid \text{stable AMV cycles}\}
    \]
    given by $n_{\bullet} \mapsto \overline{A_0^{\mathbf{i}}(n_{\bullet})}$ is injective.
   
    Suppose that $A_0^{\mathbf{i}}(n_{\bullet})$ is irreducible. Then it is not hard to see that $\overline{A_0^{\mathbf{i}}(n_{\bullet})}$ is a stable AMV cycle, proving the theorem. 
    
    Indeed, if $A_0$ is a stable MV cycle with GGMS datum $\nu_{\bullet}$ and $\mathbf{i}$-Lusztig datum $n_{\bullet}$, we have containment
    \[
     A_0(\nu_{\bullet}) \subset 
     A_0^{\mathbf{i}} (n_{\bullet}) \subset
     \overline{X(\nu_{w_0}, 0)}, \qquad
     A_0(\nu_{\bullet}) \subset 
     A_0 \subset
     \overline{X(\nu_{w_0}, 0)}.
    \]
    Then by Lemma \ref{Lemma:Polytope}, $A_0^{\mathbf{i}} (n_{\bullet}) \cap A_0$ is dense in $A_0$. Since $A_0$ is projective, the result follows from irreducibility of $A_0^{\mathbf{i}} (n_{\bullet})$.
    
    And all stable AMV cycles are constructed in this way: the number of stable AMV cycles of a given coweight $(\nu, 0)$ is the Kostant partition function of $\nu$, defined as the number of ways $\nu$ can be written as a combination of negative coroots. The reason for this is that the Kostant partition function is a sharp upper bound on the dimension of the weight space $V_{\mu}(\mu + \nu)$, and by Theorem \ref{THM:Polytope} every stable AMV cycle $A_0$ represents an MV cycle $ n\rho^{\vee} \cdot A_0$ for sufficiently large $n$. And since every negative coroot is the direction of exactly one edge in the paths defined by $\mathbf{i}$, the Kostant partition function of $\nu$ is exactly the number of distinct $\mathbf{i}$-Lusztig data of coweight $(\nu, 0)$.
    
    Irreducibility is somewhat difficult. The heart of the proof of irreducibility is the construction of surjective maps
    \[
     \mathbf{y}_{\mathbf{i}} : B(n_{\bullet}) \to A_0^{\mathbf{i}}(n_{\bullet}),
    \]
    where $B(n_{\bullet})$ is the subfunctor of $L\GG_a^{\ell(w_0)}$ defined on $R$-points as
    \[
     B(n_{\bullet})(R) =
     \{(x_1, \ldots, x_{\ell(w_0)}) \in R\laurents{\varpi}^{\ell(w_0)} \mid \valp{\varpi}{x_k} = n_k \text{ for } 1 \le k \le \ell(w_0)\}.
    \]
    
    Clearly $B(n_{\bullet})$ is an irreducible ind-scheme, so the existence of a surjective map of sheaves $\mathbf{y}_{\mathbf{i}}$ implies that $A_0^{\mathbf{i}}(n_{\bullet})$ is irreducible as well. For details and construction of $\mathbf{y}_{\mathbf{i}}$, see \cite{Ka10} Theorem 4.5. This construction is explicit, but somewhat complicated. Note that it does not depend in any way on simply-connectedness or semisimplicity of $G$.
   \end{proof}
   
   It is also the case that, fixing a reduced word $\mathbf{i}$, the variety $X(\nu, 0)$ is stratified by $\mathbf{i}$-Lusztig strata:
   \begin{equation}
    \label{eq:Lusztig-strat}
    X(\nu, 0) = 
    S_{w_0}^{\nu} \cap S_e^0 =
    \coprod_{\substack{n_{\bullet} \in \ZZ_{\ge 0}^{\ell(w_0)} \\ \nu_{\ell(w_0)} = \nu}} A_0^{\mathbf{i}} (n_{\bullet}).
   \end{equation}
   
   \begin{DEF}[$\sigma$-compatibility]
    A reduced word $\mathbf{i}$ for the longest element $w_0 \in W$ is said to be $\sigma$-compatible if there is a (uniquely determined) reduced word $\mathbf{i}_{\sigma}$ for the longest element $w_{0, \sigma} \in W^{\sigma}$ such that $\mathbf{i}$ is an expansion of $\mathbf{i}_{\sigma}$.
    
    The group $W^{\sigma}$ is a Coxeter group whose simple reflections are the longest elements in the subgroup generated by the simple reflections in a single $\sigma$-orbit (see Proposition \ref{Prop:Weyl}). If $\mathbf{i}_{\sigma}$ is a sequence of orbits $(\eta_1, \ldots, \eta_{\ell(w_{0, \sigma})})$, we say $\mathbf{i}$ is an expansion of $\mathbf{i}_{\sigma}$ if it can be partitioned into consecutive subsequences of letters corresponding to the orbits of $\mathbf{i}_{\sigma}$. If this is the case, then each such consecutive subsequence will be the longest word in the Coxeter subgroup generated by the simple reflections in the corresponding orbit.
   \end{DEF}
   
   \begin{Eg}
    \label{Eg:compatible}
    To illustrate the concept of $\sigma$-compatibility, consider the involution on the standard pinning of $SL_5$, with fixed-point subgroup $(SL_5)^{\sigma} \cong SO(5)$. On the set of simple roots, $\sigma$ acts by $(14)(23)$, with two orbits: $\eta_1 = \{1, 4\}$ and $\eta_2 = \{2, 3\}$. Then 
    \begin{align*}
     W = \langle s_1, s_2, s_3, s_4 & 
     {} \mid m_{13} = m_{14} = m_{24} = 2, \ m_{12} = m_{23} = m_{34} = 3 \rangle \\
     W^{\sigma} = \langle s_{\eta_1}, s_{\eta_2} &
     {} \mid m_{\eta_1 \eta_2} = 4 \rangle,
    \end{align*}
    so one (of two) possible reduced word for $w_{0, \sigma}$ is $\mathbf{i}_{\sigma} = (\eta_1, \eta_2, \eta_1, \eta_2)$. Since $m_{14} = 2$, the longest words on the letters in $\eta_1$ are $(1,4)$ and $(4,1)$. Since $m_{23} = 3$, the longest words on the letters in $\eta_2$ are $(2, 3, 2)$ and $(3, 2, 3)$. There are $16$ reduced words for $w_0$ expanding $\mathbf{i}_{\sigma}$, including
    \[
     \mathbf{i} = (1, 4, 2, 3, 2, 1, 4, 2, 3, 2).
    \]
   \end{Eg}
   
   \begin{Prop}
    \label{Prop:sigma-Lusztig}
    If $A_0^{\mathbf{i}} (n_{\bullet})$ is an $\mathbf{i}$-Lusztig stratum, then $\sigma(A_0^{\mathbf{i}} (n_{\bullet})) = A_0^{\sigma(\mathbf{i})} (n_{\bullet})$.
   \end{Prop}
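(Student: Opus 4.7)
The plan is to apply Proposition \ref{Prop:Orbits}(iv) termwise to the defining intersection
\[
A_0^{\mathbf{i}}(n_{\bullet}) = \bigcap_{0 \le k \le \ell(w_0)} S_{w^{\mathbf{i}}_k}^{\nu_k}
\]
and then recognize the result as the analogous intersection built from the reduced word $\sigma(\mathbf{i})$ with the same Lusztig datum $n_{\bullet}$. Applying $\sigma$ termwise via Proposition \ref{Prop:Orbits}(iv) yields
\[
\sigma(A_0^{\mathbf{i}}(n_{\bullet})) = \bigcap_{0 \le k \le \ell(w_0)} S_{\sigma(w^{\mathbf{i}}_k)}^{\sigma(\nu_k)},
\]
so the proposition reduces to checking that $\sigma(w^{\mathbf{i}}_k)$ and $\sigma(\nu_k)$ coincide with the $k$th Weyl element and $k$th vertex associated to $\sigma(\mathbf{i})$.

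The Weyl element side is a short induction on $k$: since $\sigma$ is pinning-preserving, $\sigma(s_i) = s_{\sigma(i)}$ for every simple reflection, whence
\[
\sigma(w^{\mathbf{i}}_k) = \sigma(w^{\mathbf{i}}_{k-1})\, s_{\sigma(i_k)} = w^{\sigma(\mathbf{i})}_{k-1}\, s_{\sigma(i_k)} = w^{\sigma(\mathbf{i})}_k.
\]
For the cocharacters, the key computation is that edge directions transform $\sigma$-equivariantly:
\[
\sigma(\beta_k^{\mathbf{i},\vee}) = -\sigma(w^{\mathbf{i}}_{k-1})(\sigma(\alpha_{i_k}^{\vee})) = -w^{\sigma(\mathbf{i})}_{k-1}(\alpha_{\sigma(i_k)}^{\vee}) = \beta_k^{\sigma(\mathbf{i}),\vee},
\]
using $\sigma(\alpha_i^{\vee}) = \alpha_{\sigma(i)}^{\vee}$ (another consequence of pinning-preservation) together with the $W$-equivariance of $\sigma$ on $X_*(T)$. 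Since $\sigma$ is $\ZZ$-linear on $X_*(T)$ and $\sigma(\nu_0) = \sigma(0) = 0$, a second induction using the recursive definition $\nu_k = \nu_{k-1} + \beta_k^{\mathbf{i},\vee}$ (with the appropriate multiplicities from $n_{\bullet}$) propagates this to all $\nu_k$. The Lusztig datum $n_{\bullet}$ itself is unchanged in passing from $\mathbf{i}$ to $\sigma(\mathbf{i})$: its entries record intrinsic edge lengths of the moment polytope, and $\sigma$ acts on that polytope by an affine isomorphism merely permuting the vertices.

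I do not expect a serious obstacle. The statement is a formal compatibility between two parallel constructions, and every ingredient (simple reflections, simple coroots, Weyl elements, vertex sequences) is transformed by $\sigma$ in exactly the way required. The only real work is tracking the two inductions on $k$ in tandem, together with the already-established $\sigma$-equivariance of the Iwasawa stratification from Proposition \ref{Prop:Orbits}(iv).
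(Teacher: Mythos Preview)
Your proposal is correct and follows essentially the same approach as the paper: apply the $\sigma$-equivariance of semi-infinite cells (Proposition~\ref{Prop:Orbits}\ref{Prop:Orbits:iv}) termwise to the defining intersection, then verify $\sigma(w^{\mathbf{i}}_k)=w^{\sigma(\mathbf{i})}_k$ and $\sigma(\nu_k)=\nu^{\sigma(\mathbf{i})}_k$ by induction on $k$. The only cosmetic difference is that the paper computes $\sigma(\beta^{\mathbf{i},\vee}_k)$ via the formula $\beta^{\mathbf{i},\vee}_k = w^{\mathbf{i}}_k(\rho^{\vee}) - w^{\mathbf{i}}_{k-1}(\rho^{\vee})$ together with $\sigma(\rho^{\vee})=\rho^{\vee}$, whereas you use the equivalent formula $\beta^{\mathbf{i},\vee}_k = -w^{\mathbf{i}}_{k-1}(\alpha_{i_k}^{\vee})$; both lead to the same conclusion.
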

    
   \begin{proof}
    Let $\mathbf{i}$ be a reduced word for $w_0$, and $n_{\bullet}$ an $\mathbf{i}$-Lusztig datum. Consider
    \[
     \sigma(A_0^{\mathbf{i}} (n_{\bullet})) =
     \sigma \left( \bigcap_{0 \le k \le \ell(w_0)} S_{w^{\mathbf{i}}_k}^{\nu_k} \right) =
     \bigcap_{0 \le k \le \ell(w_0)} S_{\sigma(w^{\mathbf{i}}_k)}^{\sigma(\nu_k)}.
    \]
    Now $\sigma(w^{\mathbf{i}}_k) = w^{\sigma(\mathbf{i})}_k$, and each $\sigma(\nu_k) = \sigma(\nu_{k-1}) + n_k \sigma(\beta^{\mathbf{i}, \vee}_k)$. We have 
    \[
     \sigma(\beta^{\mathbf{i}, \vee}_k) = 
     \sigma(w^{\mathbf{i}}_k(\rho^{\vee}) - w^{\mathbf{i}}_{k-1}(\rho^{\vee})) =
     w^{\sigma(\mathbf{i})}_k(\rho^{\vee}) - w^{\sigma(\mathbf{i})}_{k-1}(\rho^{\vee}) =
     \beta^{\sigma(\mathbf{i}), \vee}_k.
    \]
    By induction the $\sigma(\mathbf{i})$-Lusztig datum for $\sigma(A_0^{\mathbf{i}}(n_{\bullet}))$ is thus also $n_{\bullet}$.
   \end{proof}
    
    Suppose $\mathbf{i}$ and $\mathbf{i}'$ are distinct reduced words for $w_0$ that both expand a common reduced word $\mathbf{i}_{\sigma}$ for $w_{0, \sigma}$. Let $n_{\bullet}$ be an $\mathbf{i}$-Lusztig datum. Then there is an explicit procedure for producing the $\mathbf{i}'$-Lusztig datum $n'_{\bullet}$ for $A_0^{\mathbf{i}} (n_{\bullet})$.
    
    \begin{Lemma}[\cite{Ka10} Proposition 5.2]
     \label{Lemma:Lusztig-transforms}
     Let $\mathbf{i}$ and $\mathbf{i}'$ be two reduced words for $w_0$ related by a braid move corresponding to a pair of simple roots either disconnected or connected by an edge of weight $1$. Specifically, let $i_k$ and $i_{k+1}$ be the indices of a pair of simple roots, and let $m_{i_k, i_{k+1}} \le 3$ be the order of $s_{i_k} s_{i_{k+1}}$ in $W$. Then suppose
     \begin{align*}
      \mathbf{i} &
      = (i_1, \ldots, i_{k-1}; i_k, i_{k+1}, i_k, \ldots; i_{k+m_{i_k,i_{k+1}}+1}, \ldots, i_{\ell(w_0)}), \\
      \mathbf{i}' &
      = (i_1, \ldots, i_{k-1}; i_{k+1}, i_k, i_{k+1}, \ldots; i_{k+m_{i_k,i_{k+1}}+1}, \ldots, i_{\ell(w_0)}).
     \end{align*}
     Then define a function $R_{\mathbf{i}}^{\mathbf{i}'} : \ZZ_{\ge 0}^{\ell(w_0)} \to \ZZ_{\ge 0}^{\ell(w_0)}$ as follows:
     \begin{enumerate}
      \item
       For $k' < k$ or $k' > k + m_{i_k, i_{k+1}}$, let $R_{\mathbf{i}}^{\mathbf{i}'} (n_{\bullet})_{k'} = n_{k'}$
       
       \item
        Suppose $m_{i_k, i_{k+1}} = 2$. Then let $R_{\mathbf{i}}^{\mathbf{i}'} (n_{\bullet})_k = n_{k+1}$ and let $R(n_{\bullet})_{k+1} = n_k$.
        
       \item
        Suppose $m_{i_k, i_{k+1}} = 3$, and let $p = \min{\{n_k, n_{k+2}\}}$. Then let $R_{\mathbf{i}}^{\mathbf{i}'} (n_{\bullet})_k = n_{k+1} + n_{k+2} - p$, $R_{\mathbf{i}}^{\mathbf{i}'} (n_{\bullet})_{k+1} = p$, and $R_{\mathbf{i}}^{\mathbf{i}'} (n_{\bullet})_{k+2} = n_k + n_{k+1} - p$.
     \end{enumerate}
     
     If $n_{\bullet}$ is the $\mathbf{i}$-Lusztig datum of a stable AMV cycle $A_0$, then $R_{\mathbf{i}}^{\mathbf{i}'} (n_{\bullet})$ is the $\mathbf{i}'$-Lusztig datum of $A_0$.
    \end{Lemma}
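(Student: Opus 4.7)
The plan is to reduce the statement to a local computation in the rank-two sub-root-system generated by $\alpha^{\vee}_{i_k}$ and $\alpha^{\vee}_{i_{k+1}}$, and then handle the two cases $m_{i_k, i_{k+1}} = 2$ and $m_{i_k, i_{k+1}} = 3$ separately using the geometry of the moment polytope of the stable AMV cycle $A_0$ associated to $n_{\bullet}$.

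First, I would observe that the reduced words $\mathbf{i}$ and $\mathbf{i}'$ agree in every letter outside positions $k, \ldots, k+m_{i_k,i_{k+1}}-1$, so the associated Weyl group sequences $w^{\mathbf{i}}_{k'}$, $w^{\mathbf{i}'}_{k'}$ coincide for $k' < k$ and $k' \geq k+m_{i_k,i_{k+1}}$. Consequently the two paths along the edges of $P_{A_0}$ share their initial segment from $\nu_0 = 0$ to $\nu_{k-1} = \nu'_{k-1}$ and their terminal segment from $\nu_{k+m_{i_k,i_{k+1}}-1} = \nu'_{k+m_{i_k,i_{k+1}}-1}$ to $\nu_{\ell(w_0)} = \nu'_{\ell(w_0)}$, with $R_{\mathbf{i}}^{\mathbf{i}'}(n_{\bullet})_{k'} = n_{k'}$ automatic outside the local window. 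The problem thus reduces to comparing the two sub-paths joining the common endpoints $\nu_{k-1}$ and $\nu_{k+m_{i_k,i_{k+1}}-1}$, whose edges all lie in directions $w^{\mathbf{i}}_{k-1}(\alpha^{\vee})$ for $\alpha^{\vee}$ a coroot of the rank-two subsystem spanned by $\alpha^{\vee}_{i_k}, \alpha^{\vee}_{i_{k+1}}$. By applying $w^{\mathbf{i}}_{k-1}$ and translating, the analysis takes place in a fixed rank-two coroot plane and depends only on the local sub-polytope of $P_{A_0}$ cut out by these edges.

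If $m_{i_k,i_{k+1}} = 2$, then $\alpha^{\vee}_{i_k}$ and $\alpha^{\vee}_{i_{k+1}}$ span orthogonal axes of an $A_1 \times A_1$ subsystem, and the two sub-paths from $\nu_{k-1}$ to $\nu_{k+1}$ traverse the two pairs of opposite sides of a single coordinate rectangle (of edge lengths $n_k$ and $n_{k+1}$) in that plane. Reversing the order in which the two orthogonal edge-types are traversed therefore swaps the Lusztig entries: $R_{\mathbf{i}}^{\mathbf{i}'}(n_{\bullet})_k = n_{k+1}$ and $R_{\mathbf{i}}^{\mathbf{i}'}(n_{\bullet})_{k+1} = n_k$, exactly as stated.

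The case $m_{i_k,i_{k+1}} = 3$ is the main obstacle: here $\alpha^{\vee}_{i_k}$ and $\alpha^{\vee}_{i_{k+1}}$ generate an $A_2$ subsystem, and the relevant local region of $P_{A_0}$ is a (possibly degenerate) hexagon whose six edge directions are the coroots of this $A_2$. The two monotone paths from $\nu_{k-1}$ to $\nu_{k+2}$ that respect the orderings $\le_{w^{\mathbf{i}}_{k'}}$, respectively $\le_{w^{\mathbf{i}'}_{k'}}$, are precisely the two ``extreme'' boundary arcs of this hexagon. I would read off $R_{\mathbf{i}}^{\mathbf{i}'}(n_{\bullet})$ by writing down the coordinates of the intermediate vertices on the opposite arc in terms of $(n_k, n_{k+1}, n_{k+2})$, using that the hexagon is determined by three linear constraints (one per pair of parallel edges) and closes up. A short case distinction according to which of $n_k, n_{k+2}$ is smaller — equivalently, whether the hexagon is ``short'' on the $\alpha^{\vee}_{i_k}$ side or on the $\alpha^{\vee}_{i_{k+1}}$ side — yields the tropical formula with $p = \min\{n_k, n_{k+2}\}$. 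Alternatively, one can replace this direct polytope computation by reducing to the $\mathrm{SL}_3$ affine Grassmannian via the rank-two Levi subgroup attached to $\{i_k, i_{k+1}\}$ and invoking the explicit parametrization of stable AMV cycles in rank two given in \cite{Ka10}, Proposition 5.2; in either presentation the essential content is the tropicalization of the $A_2$ braid relation for Lusztig's parametrization of the canonical basis.
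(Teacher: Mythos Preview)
The paper does not prove this lemma; it simply cites \cite{Ka10} Proposition 5.2 and uses the result. So there is no ``paper's proof'' to compare against, only the question of whether your sketch stands on its own.

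Your reduction to a local rank-two computation and the $m_{i_k,i_{k+1}}=2$ case are fine. The gap is in the $m_{i_k,i_{k+1}}=3$ case. You assert that the hexagon is ``determined by three linear constraints (one per pair of parallel edges) and closes up,'' but closure of a hexagon with edges in the three $A_2$ coroot directions imposes only \emph{two} linear relations among the six edge lengths: writing the two paths as
\[
 -n_k\alpha^{\vee}_{i_k} - n_{k+1}(\alpha^{\vee}_{i_k}+\alpha^{\vee}_{i_{k+1}}) - n_{k+2}\alpha^{\vee}_{i_{k+1}}
 \quad\text{and}\quad
 -n'_k\alpha^{\vee}_{i_{k+1}} - n'_{k+1}(\alpha^{\vee}_{i_k}+\alpha^{\vee}_{i_{k+1}}) - n'_{k+2}\alpha^{\vee}_{i_k},
\]
equating endpoints gives $n_k+n_{k+1}=n'_{k+1}+n'_{k+2}$ and $n_{k+1}+n_{k+2}=n'_k+n'_{k+1}$, leaving a one-parameter family of convex hexagons with the prescribed ``$\mathbf{i}$-side.'' Nothing in convexity or closure singles out $n'_{k+1}=\min\{n_k,n_{k+2}\}$. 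That identity is precisely the \emph{tropical Pl\"ucker relation}, and the fact that the $2$-faces of the moment polytope of an AMV cycle satisfy it is the substantive content of Kamnitzer's theorem---it is not a formal consequence of $P_{A_0}$ being a pseudo-Weyl polytope. Your ``short case distinction'' presupposes exactly what has to be proved.

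Your proposed alternative---reducing to the rank-two Levi and invoking \cite{Ka10} Proposition 5.2---is circular, since that is the statement under discussion. A genuine proof must either establish the tropical Pl\"ucker relation for MV polytopes directly (as Kamnitzer does, via an explicit analysis of the geometry in $\GrG{\mathrm{SL}_3}$ or via the BZ-datum characterization), or import it as a black box with an honest citation.
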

    
    Since any two simple roots in a $\sigma$-orbit are either disconnected or connected by an edge of weight $1$ in the Dynkin diagram, we are only interested in the cases above. However, \cite{Ka10} proves the proposition in all cases. Since all reduced words for $w_0$ are related by sequences of braid moves, Kamnitzer's full proposition implies the existence of a function $R_{\mathbf{i}}^{\mathbf{i}'}$ for all pairs of reduced words $(\mathbf{i}, \mathbf{i}')$. As a consequence of the cases above, we can define $R_{\mathbf{i}}^{\mathbf{i}'}$ whenever $\mathbf{i}$ and $\mathbf{i}'$ expand a common word $\mathbf{i}_{\sigma}$. Thus we have justification for the following definition.
    \begin{DEF}[$\sigma$-invariant $\mathbf{i}$-Lusztig datum]
     Fix a reduced word $\mathbf{i}$ for $w_0$ which expands a reduced word $\mathbf{i}_{\sigma}$ for $w_{0, \sigma}$. Then an $\mathbf{i}$-Lusztig datum $n_{\bullet}$ is $\sigma$-invariant if it is constant on each $\sigma$-orbit. If $n_{\bullet}$ is a $\sigma$-invariant $\mathbf{i}$-Lusztig datum, then let $\bar{n}_{\bullet}$ be the corresponding $\mathbf{i}_{\sigma}$-Lusztig datum.
    \end{DEF}
    
    Consider Example \ref{Eg:compatible}. In this case, an $\mathbf{i}$-Lusztig datum $n_{\bullet}$ is $\sigma$-invariant if and only if $n_1 = n_2$, $n_3 = n_4 = n_5$, $n_6 = n_7$, and $n_8 = n_9 = n_{10}$. If this is the case, then $\bar{n}_{\bullet} = (n_1, n_3, n_6, n_8)$. 
    
    Note that $\mathbf{i}$ and $\sigma(\mathbf{i})$ are two different reduced words for $w_0$ expanding $\mathbf{i}_{\sigma}$, and $R_{\mathbf{i}}^{\sigma(\mathbf{i})} (n_{\bullet}) = n_{\bullet}$ if and only if $n_{\bullet}$ is $\sigma$-invariant. Thus, given a $\sigma$-compatible reduced word $\mathbf{i}$ for $w_0$, an $\mathbf{i}$-Lusztig datum $n_{\bullet}$ is $\sigma$-invariant if and only if the stable MV cycle $\overline{A_0^{\mathbf{i}} (n_{\bullet})}$ is $\sigma$-invariant.
    
    \begin{Lemma}
     \label{Lemma:Lusztig-fp}
     Let $\mathbf{i}$ be a $\sigma$-compatible reduced word for $w_0$, and let $n_{\bullet}$ be a $\sigma$-invariant $\mathbf{i}$-Lusztig datum. Then we can identify the fixed-point subvariety
     \[
      (A_0^{\mathbf{i}} (n_{\bullet}))^{\sigma} = A_0^{\mathbf{i}_{\sigma}} (\bar{n}_{\bullet}).
     \]
    \end{Lemma}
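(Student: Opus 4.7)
The plan is to prove the identification by double inclusion within $\mathrm{Gr}_{G^\sigma}$, leveraging the block structure of $\mathbf{i}$. Since $\mathbf{i}$ expands $\mathbf{i}_\sigma$, the positions of $\mathbf{i}$ decompose into $M = \ell(w_{0,\sigma})$ consecutive blocks with endpoints $0 = k_0 < k_1 < \cdots < k_M = \ell(w_0)$. At each block boundary, $w^{\mathbf{i}}_{k_j} = w^{\mathbf{i}_\sigma}_j \in W^\sigma$, and the $\sigma$-invariance of $n_\bullet$ forces $\nu_{k_j} = \bar\nu_j \in X_*(T^\sigma)$, since within each block the edge sum $\sum n_k \beta^{\mathbf{i},\vee}_k$ becomes $\sigma$-invariant.

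For the inclusion $(A_0^{\mathbf{i}}(n_\bullet))^\sigma \subseteq A_0^{\mathbf{i}_\sigma}(\bar n_\bullet)$, a $\sigma$-fixed point $x \in A_0^{\mathbf{i}}(n_\bullet)$ lies in $S_{w^{\mathbf{i}_\sigma}_j}^{\bar\nu_j}$ for each block boundary $j$. Since the pair $(w^{\mathbf{i}_\sigma}_j, \bar\nu_j)$ is $\sigma$-invariant, Proposition \ref{Prop:Orbits}\ref{Prop:Orbits:v} identifies $(S_{w^{\mathbf{i}_\sigma}_j}^{\bar\nu_j})^\sigma$ with $(S_\sigma)_{w^{\mathbf{i}_\sigma}_j}^{\bar\nu_j}$, so $x$ lies in the intersection $\bigcap_j (S_\sigma)_{w^{\mathbf{i}_\sigma}_j}^{\bar\nu_j} = A_0^{\mathbf{i}_\sigma}(\bar n_\bullet)$.

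The reverse inclusion $A_0^{\mathbf{i}_\sigma}(\bar n_\bullet) \subseteq (A_0^{\mathbf{i}}(n_\bullet))^\sigma$ is the substantive step: since $A_0^{\mathbf{i}_\sigma}(\bar n_\bullet) \subseteq \mathrm{Gr}_{G^\sigma}$ is automatically $\sigma$-fixed, it remains to show that every $y \in A_0^{\mathbf{i}_\sigma}(\bar n_\bullet)$ lies in $S_{w^{\mathbf{i}}_k}^{\nu_k}$ for each intermediate index $k_{j-1} < k < k_j$, where neither $w^{\mathbf{i}}_k$ nor $\nu_k$ is $\sigma$-invariant. My plan is to reduce the verification block-by-block to a Levi calculation: within a single block, the intermediate Weyl elements factor as $w^{\mathbf{i}}_k = w^{\mathbf{i}_\sigma}_{j-1} \cdot v$ with $v \in W_{\eta_j}$, so the intermediate cell conditions pull back to cell conditions inside a translate of the Levi sub-Grassmannian of $L_{\eta_j}$, the Levi of $G$ corresponding to the orbit $\eta_j$. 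Using pinning-preservation of $\sigma$ and the explicit surjective parametrization $\mathbf{y}_\mathbf{i} : B(n_\bullet) \twoheadrightarrow A_0^{\mathbf{i}}(n_\bullet)$ from the proof of Theorem \ref{THM:Lusztig-index}, one checks that $\sigma$ intertwines $\mathbf{y}_\mathbf{i}$ with $\mathbf{y}_{\sigma(\mathbf{i})}$ via the coordinate permutation of $B(n_\bullet)$ induced by $\sigma$; on the $\sigma$-fixed parameter locus, the block-averaging identification $B(n_\bullet)^\sigma \cong B^\sigma(\bar n_\bullet)$ factors $\mathbf{y}_\mathbf{i}|_{B(n_\bullet)^\sigma}$ through $\mathbf{y}_{\mathbf{i}_\sigma}$, and surjectivity gives the inclusion.

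The main obstacle is the explicit verification of this factorization within each block. Since $L_{\eta_j}$ has semisimple rank at most three (the possible types being $A_1^r$ for disconnected orbits or $A_2$ for an orbit with a single edge of the Dynkin diagram), and $L_{\eta_j}^\sigma$ has semisimple rank one, the calculation is a small, finite case analysis using the Chevalley commutator relations among the root homomorphisms $x_{\alpha^\vee}$ within a single $\sigma$-orbit. The pinning-preservation $\sigma \circ x_\alpha = x_{\sigma(\alpha)}$ ensures that the coordinate identification is clean, and the $\sigma$-invariance of $n_\bullet$ guarantees that no obstruction appears when passing from $\mathbf{i}$ to $\mathbf{i}_\sigma$, since $R_\mathbf{i}^{\sigma(\mathbf{i})}$ acts as the identity on $\sigma$-invariant data by the characterization given before the statement of this lemma.
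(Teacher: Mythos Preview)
Your plan is correct and covers both inclusions, though the division of labor is reversed compared with the paper.

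For the inclusion $A_0^{\mathbf{i}_\sigma}(\bar n_\bullet) \subseteq (A_0^{\mathbf{i}}(n_\bullet))^\sigma$, you and the paper take the same route: the commutative square relating the Kamnitzer parametrizations $\mathbf{y}_{\mathbf{i}}$ and $\mathbf{y}_{\mathbf{i}_\sigma}$, together with surjectivity of $\mathbf{y}_{\mathbf{i}_\sigma}$. The paper simply asserts commutativity as ``clear from the explicit definition'' in \cite{Ka10}; your proposed Levi-by-Levi verification is one way to spell this out, but it is not logically required beyond what the paper already invokes.

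For the reverse inclusion $(A_0^{\mathbf{i}}(n_\bullet))^\sigma \subseteq A_0^{\mathbf{i}_\sigma}(\bar n_\bullet)$, your argument is genuinely different and more direct. You observe that at each block boundary $k_j$ the pair $(w^{\mathbf{i}}_{k_j}, \nu_{k_j}) = (w^{\mathbf{i}_\sigma}_j, \bar\nu_j)$ is $\sigma$-invariant---this uses exactly the block-wise telescoping identity that appears as equation~(\ref{eq:betas-better}) in the proof of Lemma~\ref{Lemma:coweight}---so by Proposition~\ref{Prop:Orbits}\ref{Prop:Orbits:v} a $\sigma$-fixed point of $A_0^{\mathbf{i}}(n_\bullet)$ lands in each $(S_\sigma)_{w^{\mathbf{i}_\sigma}_j}^{\bar\nu_j}$, hence in their intersection $A_0^{\mathbf{i}_\sigma}(\bar n_\bullet)$. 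The paper instead argues globally: having established the first inclusion for \emph{every} $\sigma$-invariant $n_\bullet$, it notes that both $\coprod_{n_\bullet} (A_0^{\mathbf{i}}(n_\bullet))^\sigma$ and $\coprod_{\bar n_\bullet} A_0^{\mathbf{i}_\sigma}(\bar n_\bullet)$ stratify the same variety $(X(\nu_{\ell(w_0)},0))^\sigma$, so disjointness of strata forces each inclusion to be an equality. Your approach treats one datum at a time and avoids matching the two stratifications; the paper's is shorter once the first inclusion is in hand, but less self-contained.

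One small caution: the phrase ``$B(n_\bullet)^\sigma \cong B_\sigma(\bar n_\bullet)$'' presumes a specific $\sigma$-action on the parameter space, which is more than is needed. What the argument actually uses is only a map $\mathrm{diag}: B_\sigma(\bar n_\bullet) \to B(n_\bullet)$ making the square with $\mathbf{y}_{\mathbf{i}}$ and $\mathbf{y}_{\mathbf{i}_\sigma}$ commute; you do not need to identify $B_\sigma(\bar n_\bullet)$ with a fixed-point locus.
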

    
    \begin{proof}
     Recall the surjective map
     \[
      \mathbf{y}_{\mathbf{i}}: B(n_{\bullet}) \to A_0^{\mathbf{i}} (n_{\bullet})
     \]
     from the proof of Theorem \ref{THM:Lusztig-index}, which is defined explicitly in the proof of \cite{Ka10} Theorem 4.5. Consider also the map
     \[
      \mathbf{y}_{\mathbf{i}_{\sigma}} : B_{\sigma}(\bar{n}_{\bullet}) \to A_0^{\mathbf{i}_{\sigma}} (\bar{n}_{\bullet}),
     \]
     defined analogously for $G^{\sigma}$. It is clear from the explicit definition of $\mathbf{y}_{\mathbf{i}}$ that in the square
     \[
      \begin{tikzcd}
       B_{\sigma} (\bar{n}_{\bullet}) \ar[r, "\mathrm{diag}"] \ar[d, "\mathbf{y}_{\mathbf{i}_{\sigma}}" left] &
       B (n_{\bullet}) \ar[d, "\mathbf{y}_{\mathbf{i}}"]
        \\
       A_0^{\mathbf{i}_{\sigma}} (\bar{n}_{\bullet}) \ar[r,  "\iota"] &
       A_0^{\mathbf{i}} (n_{\bullet})
      \end{tikzcd}
     \]
     the map $\iota$ is well-defined and injective, and the square commutes, for $\sigma$-invariant $n_{\bullet}$. In particular, $A_0^{\mathbf{i}_{\sigma}} (\bar{n}_{\bullet}) \subset (A_0^{\mathbf{i}} (n_{\bullet}))^{\sigma} = A_0^{\mathbf{i}} (n_{\bullet}) \cap \GrG{G^{\sigma}}$.
     
     The reverse inclusion follows from the $\mathbf{i}_{\sigma}$- and $\mathbf{i}$-Lusztig stratifications from equation (\ref{eq:Lusztig-strat}) of the varieties 
     \[
      (X(\nu_{\ell(w_0)}, 0))^{\sigma} = 
      (S_{w_0}^{\nu_{\ell(w_0)}} \cap S_e^0) \cap \GrG{G^{\sigma}} =
      (S_{\sigma})_{w_{0, \sigma}}^{\nu_{\ell(w_0)}} \cap (S_{\sigma})_e^0
     \]
     and 
     \[
      X(\nu_{\ell(w_0)}, 0) =
      S_{w_0}^{\nu_{\ell(w_0)}} \cap S_e^0.
     \]
     Note also the semi-infinite cells $S_{w_0}^{\nu}$ and $S_e^0$ are $\sigma$-invariant, since $\sigma$-invariance of the $\mathbf{i}$-Lusztig datum $n_{\bullet}$ implies $\nu_{\ell(w_0)}$ is a $\sigma$-invariant cocharacter.
    \end{proof}
    
    \begin{Lemma}
    \label{Lemma:coweight}
    Let $n_{\bullet}$ be a $\sigma$-invariant $\mathbf{i}$-Lusztig datum, where $\mathbf{i}$ expands $\mathbf{i}_{\sigma}$. Suppose $A_0^{\mathbf{i}}(n_{\bullet})$ is of coweight $(\nu, 0)$. Then $A_0^{\mathbf{i}_{\sigma}} (\bar{n}_{\bullet})$ is also of coweight $(\nu, 0)$.
   \end{Lemma}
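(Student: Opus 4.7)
The plan is to deduce the coweight equality geometrically, by combining Lemma \ref{Lemma:Lusztig-fp} with the dictionary between $\sigma$-fixed semi-infinite cells and semi-infinite cells of $\GrG{G^{\sigma}}$ given by Proposition \ref{Prop:Orbits}. The observation to exploit is that the statement ``$A_0^{\mathbf{i}}(n_{\bullet})$ has coweight $(\nu, 0)$'' unpacks by definition to the containment $A_0^{\mathbf{i}}(n_{\bullet}) \subset S_{w_0}^{\nu} \cap S_e^0$, and Lemma \ref{Lemma:Lusztig-fp} will let one transfer this containment to $A_0^{\mathbf{i}_{\sigma}}(\bar{n}_{\bullet})$ inside $\GrG{G^{\sigma}}$.

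Concretely, I would proceed as follows. First, combine the hypothesis with Lemma \ref{Lemma:Lusztig-fp} to obtain $A_0^{\mathbf{i}_{\sigma}}(\bar{n}_{\bullet}) = A_0^{\mathbf{i}}(n_{\bullet}) \cap \GrG{G^{\sigma}} \subset (S_{w_0}^{\nu} \cap S_e^0) \cap \GrG{G^{\sigma}}$. Second, note that $w_0 \in W^{\sigma}$ because $\sigma$ preserves the set of positive roots, and that $\nu$ itself is $\sigma$-invariant because $\sigma$-invariance of $n_{\bullet}$ forces $A_0^{\mathbf{i}}(n_{\bullet})$ to be $\sigma$-invariant, and its unique coweight must therefore be fixed by $\sigma$. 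Applying Proposition \ref{Prop:Orbits}(v) to the pairs $(w_0, \nu)$ and $(e, 0)$ then rewrites the intersection as $(S_{\sigma})_{w_0}^{\nu} \cap (S_{\sigma})_e^0$, so $A_0^{\mathbf{i}_{\sigma}}(\bar{n}_{\bullet}) \subset (S_{\sigma})_{w_0}^{\nu} \cap (S_{\sigma})_e^0$.

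The remaining point, and the main subtlety, is that the coweight of a stable AMV cycle in $\GrG{G^{\sigma}}$ is defined using the longest element $w_{0, \sigma}$ of $W^{\sigma}$, not $w_0$ of $W$. The hard part will be to identify these two, i.e., to show $w_0 = w_{0, \sigma}$ as elements of $W^{\sigma}$. The argument I have in mind is this: because the pinning of $G^{\sigma, \circ}$ is induced from that of $G$, the dominant cone for $G^{\sigma, \circ}$ inside $X_*(T^{\sigma, \circ}) = X_*(T^{\sigma})$ coincides with $X_*(T^{\sigma}) \cap X_*(T)^+$; but $w_0 \in W^{\sigma}$ preserves $X_*(T^{\sigma})$ and sends $X_*(T)^+$ to $-X_*(T)^+$, so it sends the $G^{\sigma, \circ}$-dominant cone to its negative, which by the standard characterization forces $w_0 = w_{0, \sigma}$. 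With this identification the containment rereads as $A_0^{\mathbf{i}_{\sigma}}(\bar{n}_{\bullet}) \subset (S_{\sigma})_{w_{0, \sigma}}^{\nu} \cap (S_{\sigma})_e^0$, which is exactly the statement that $A_0^{\mathbf{i}_{\sigma}}(\bar{n}_{\bullet})$ has coweight $(\nu, 0)$ in $G^{\sigma, \circ}$.
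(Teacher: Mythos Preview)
Your proof is correct and takes a genuinely different route from the paper. The paper argues combinatorially: it writes $\nu = \sum_k n_k \beta^{\mathbf{i},\vee}_k$ and the corresponding sum for $G^{\sigma}$, groups the terms of the first sum according to the blocks of $\mathbf{i}$ over $\mathbf{i}_{\sigma}$, and checks block-by-block (splitting into the $A_2$ and $A_1\times\cdots\times A_1$ cases) the identity $\beta^{\mathbf{i}_{\sigma},\vee}_{k'} = \sum_{k\in\pi^{-1}(k')} \beta^{\mathbf{i},\vee}_k$, ultimately relying on $\rho^{\vee}=\rho^{\vee}_{\sigma}$ (Proposition~\ref{Prop:rho}). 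Your argument instead observes that the desired coweight statement is essentially already contained in the proof of Lemma~\ref{Lemma:Lusztig-fp}: once you know $A_0^{\mathbf{i}_{\sigma}}(\bar n_{\bullet}) = (A_0^{\mathbf{i}}(n_{\bullet}))^{\sigma}$, the containment $A_0^{\mathbf{i}}(n_{\bullet})\subset S_{w_0}^{\nu}\cap S_e^0$ passes to $\sigma$-fixed points via Proposition~\ref{Prop:Orbits}(v) and the identification $w_0=w_{0,\sigma}$, and disjointness of semi-infinite cells pins down the coweight. Your route is shorter and avoids the case analysis; the paper's route has the advantage of yielding the sharper blockwise identity on the $\beta^{\vee}$'s, which makes the telescoping structure explicit and does not invoke Lemma~\ref{Lemma:Lusztig-fp}. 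Both arguments need $w_0=w_{0,\sigma}$, which you justify via the dominant-cone characterization; note this is also immediate from the hypothesis that $\mathbf{i}$ expands $\mathbf{i}_{\sigma}$, since multiplying the letters of $\mathbf{i}$ in blocks gives exactly the simple reflections of $W^{\sigma}$ in the order prescribed by $\mathbf{i}_{\sigma}$.
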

   
   \begin{proof}
    Let $\pi: \{1, \ldots, \ell(w_0)\} \to \{1, \ldots ,\ell(w_{0, \sigma})\}$ be the surjection of indices of the words. It is sufficient to prove that, for each $1 \le k' \le \ell(w_{0, \sigma})$, we have
    \begin{equation}
     \label{eq:betas}
     \bar{n}_{k'} \beta^{\mathbf{i}_{\sigma}, \vee}_{k'} =
     \sum_{k \in \pi^{-1}(k')} n_k \beta^{\mathbf{i}, \vee}_k.
    \end{equation}
    Then since $\bar{n}_{k'} = n_k$ for all $k \in \pi^{-1}(k')$, equation (\ref{eq:betas}) is equivalent to
    \begin{equation}
     \label{eq:betas-better}
     \beta^{\mathbf{i}_{\sigma}, \vee}_{k'} = 
     \sum_{k \in \pi^{-1} (k')} \beta^{\mathbf{i}, \vee}_k
    \end{equation}
    for all $k'$. For each $k'$ there are two possibilities. Either $\pi^{-1} (k') = \{k, k+1, k+2\}$, where $\alpha^{\vee}_{i_k} = \alpha^{\vee}_{i_{k+2}}$ and $\alpha^{\vee}_{i_{k+1}}$ are two distinct simple coroots connected by an edge of weight $1$ in the Dynkin diagram of $G$; or all simple coroots corresponding to $k \in \pi^{-1} (k')$ are pairwise disconnected. In the first case we can say the orbit is of type $A_2$, and in the second we can say it is of type $A_1 \times \cdots \times A_1$.
    
    First suppose $k'$ corresponds to an orbit of type $A_2$. Recall 
    \begin{align*}
     \beta^{\mathbf{i}, \vee}_{k + j} &
     = w^{\mathbf{i}}_{k + j} (\rho^{\vee}) - w^{\mathbf{i}}_{k + j -1} (\rho^{\vee})
     = w^{\mathbf{i}}_{k + j - 1} (s_{i_{k + j}}(\rho^{\vee}) - \rho^{\vee}) \ \text{ (for $j = 0, 1, 2$), \ and}
      \\
     \beta^{\mathbf{i}_{\sigma}, \vee}_{k'} &
     = w^{\mathbf{i}_{\sigma}}_{k'}(\rho^{\vee}_{\sigma}) - w^{\mathbf{i}_{\sigma}}_{k' - 1} (\rho^{\vee}_{\sigma})
     = w^{\mathbf{i}_{\sigma}}_{k' - 1} (s_{i_{k'}} (\rho^{\vee}_{\sigma}) - \rho^{\vee}_{\sigma}).
    \end{align*}
    Then the right hand side of (\ref{eq:betas-better}) is
    \begin{align}
     \nonumber
     \beta^{\mathbf{i}, \vee}_k + \beta^{\mathbf{i}, \vee}_{k+1} + \beta^{\mathbf{i}, \vee}_{k + 2} &
     = w^{\mathbf{i}}_{k+2} (\rho^{\vee}) - w^{\mathbf{i}}_{k - 1} (\rho^{\vee}) 
      \\
     \nonumber
     &
     = w^{\mathbf{i}}_{k-1} (s_{i_k} s_{i_{k+1}} s_{i_{k+2}} (\rho^{\vee}) - \rho^{\vee})
      \\
     \label{eq:rho}
     &
     = w^{\mathbf{i}_{\sigma}}_{k' - 1} (s_{i_{k'}} (\rho^{\vee}_{\sigma}) - \rho^{\vee}_{\sigma}).
    \end{align}
    The equality of line (\ref{eq:rho}) follows from Propositions \ref{Prop:Weyl} and \ref{Prop:rho}: $s_{i_{k'}} = s_{i_k} s_{i_{k+1}} s_{i_{k+2}}$ is the longest element of the Coxeter subgroup generated by letters in the orbit $\eta$, and so by induction $w^{\mathbf{i}}_{k - 1} = w^{\mathbf{i}_{\sigma}}_{k' - 1}$.
    
    The case $k'$ corresponds to an orbit of type $A_1 \times \cdots \times A_1$ is similar, except the orbit can be of any order,
    and each index in the orbit appears only once. Then in this case $s_{i_{k'}} = s_{i_k} s_{i_{k+1}} \cdots s_{i_{k + |\eta| - 1}}$, and so equation (\ref{eq:betas-better}) holds.
   \end{proof}
   
  \section{The bijection for AMV cycles}
   \label{S:Gathering}
   
   \begin{THM}
    \label{THM:bijection}
    Let $(\lambda, \mu)$ be a coweight with $\mu$ dominant, $\lambda \le \mu$, and both fixed by $\sigma$. Then taking $\sigma$-fixed points induces a bijection between $\sigma$-invariant MV cycles in $\GrG{G}$ of coweight and $(\lambda, \mu)$ and MV cycles in $\GrG{G^{\sigma}}$ of coweight $(\lambda, \mu)$.
   \end{THM}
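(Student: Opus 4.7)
The plan is to assemble the bijection in three stages: first a bijection of stable AMV cycles via the $\mathbf{i}$-Lusztig parametrization; then translation by $\mu$ to a bijection of AMV cycles of coweight $(\lambda, \mu)$; finally restriction to MV cycles via Anderson's polytope criterion.

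To begin, I fix a $\sigma$-compatible reduced word $\mathbf{i}$ for $w_0$ expanding a reduced word $\mathbf{i}_{\sigma}$ for $w_{0, \sigma}$.  By the discussion preceding Lemma \ref{Lemma:Lusztig-fp}, an $\mathbf{i}$-Lusztig datum $n_{\bullet}$ is $\sigma$-invariant if and only if the corresponding stable AMV cycle is $\sigma$-invariant, and the assignment $n_{\bullet} \mapsto \bar{n}_{\bullet}$ bijects $\sigma$-invariant $\mathbf{i}$-Lusztig data with all $\mathbf{i}_{\sigma}$-Lusztig data.  Combining Theorem \ref{THM:Lusztig-index} applied to $G$ and to $G^{\sigma}$ with Lemmas \ref{Lemma:Lusztig-fp} and \ref{Lemma:coweight}, the fixed-point assignment $A_0 \mapsto A_0^{\sigma}$ becomes a coweight-preserving bijection between $\sigma$-invariant stable AMV cycles in $\GrG{G}$ and stable AMV cycles in $\GrG{G^{\sigma}}$.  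Translating by the $\sigma$-invariant cocharacter $\mu$, an operation that commutes with taking $\sigma$-fixed points, promotes this to a bijection $A \mapsto A^{\sigma}$ between $\sigma$-invariant AMV cycles of coweight $(\lambda, \mu)$ in $\GrG{G}$ and AMV cycles of coweight $(\lambda, \mu)$ in $\GrG{G^{\sigma}}$.

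To restrict this bijection to MV cycles I invoke Proposition \ref{Prop:Polytope}: an AMV cycle of coweight $(\lambda, \mu)$ is (the closure of) an MV cycle if and only if it is contained in the Schubert variety $\overline{\GrG{G}^{\mu}}$, and analogously for $G^{\sigma}$.  The forward implication is immediate: if $A \subset \overline{\GrG{G}^{\mu}}$ then $A^{\sigma} = A \cap \GrG{G^{\sigma}} \subset \overline{\GrG{G}^{\mu}} \cap \GrG{G^{\sigma}} = \overline{\GrG{G^{\sigma}}^{\mu}}$ by the closure identity recorded after Proposition \ref{Prop:Orbits}.

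The reverse implication is the main obstacle: $A^{\sigma} \subset \overline{\GrG{G^{\sigma}}^{\mu}}$ must force $A \subset \overline{\GrG{G}^{\mu}}$.  Using Theorem \ref{THM:Polytope}, this reduces to the combinatorial claim that $P_{A^{\sigma}} + \mu \subset \Conv{W^{\sigma} \cdot \mu}$ implies $P_A + \mu \subset \Conv{W \cdot \mu}$, where $P_A$ is a $\sigma$-invariant pseudo-Weyl polytope whose GGMS data $(\nu_w)_{w \in W}$ satisfy $\sigma(\nu_w) = \nu_{\sigma(w)}$.  The approach is first to establish the polytope identity $\Conv{W \cdot \mu} \cap X_*(T^{\sigma})_{\RR} = \Conv{W^{\sigma} \cdot \mu}$ for $\mu$ dominant and $\sigma$-invariant, which reduces the containment on the $\sigma$-invariant vertices $\{\nu_w : w \in W^{\sigma}\}$ of $P_A$ to the corresponding statement for $P_{A^{\sigma}}$, and then to express the remaining vertices $\nu_u$ (those indexed by $u \notin W^{\sigma}$) in $\mathbf{i}$-Lusztig coordinates via iteration of Lemma \ref{Lemma:Lusztig-transforms}, using the edge-direction compatibility \eqref{eq:betas-better} from Lemma \ref{Lemma:coweight} to verify that the remaining containment conditions are $\sigma$-consequences of those already checked.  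This final combinatorial verification is where I expect most of the technical labor to lie, though it is elementary once the Lusztig-data framework is in place.
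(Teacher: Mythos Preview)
Your first two paragraphs---assembling the AMV-cycle bijection via $\mathbf{i}$-Lusztig data and translation by $\mu$, then checking the forward implication $A \subset \overline{\GrG{G}^{\mu}} \Rightarrow A^{\sigma} \subset \overline{\GrG{G^{\sigma}}^{\mu}}$---match the paper's proof exactly.

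For the reverse implication the paper takes a much shorter geometric route that bypasses the combinatorial verification you anticipate. Rather than proving the polytope implication $P_{A^{\sigma}} \subset \Conv{W^{\sigma} \cdot \mu} \Rightarrow P_A \subset \Conv{W \cdot \mu}$ vertex by vertex, the paper argues the contrapositive directly on the variety: if $A \not\subset \overline{\GrG{G}^{\mu}}$, then in fact $GGMS(A) \cap \overline{\GrG{G}^{\mu}} = \varnothing$, because a single point in that intersection would lie in $S_w^{\nu_w} \cap \overline{S_w^{w(\mu)}}$ for every $w \in W$, forcing $\nu_w \le_w w(\mu)$ for all $w$ and hence (by Theorem~\ref{THM:Polytope}) $A \subset \overline{\GrG{G}^{\mu}}$. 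Since $GGMS(A)$ is $\sigma$-stable and dense in $A$, its $\sigma$-fixed locus meets $A^{\sigma}$; but that fixed locus is already disjoint from $\overline{\GrG{G^{\sigma}}^{\mu}} \subset \overline{\GrG{G}^{\mu}}$, so $A^{\sigma} \not\subset \overline{\GrG{G^{\sigma}}^{\mu}}$. No polytope-slice identity, no per-vertex analysis, no iterated braid moves are needed.

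Your proposed route is not visibly wrong, but the ``final combinatorial verification'' you flag is a real obstacle rather than routine bookkeeping. Not every $u \in W$ appears as a prefix of a $\sigma$-compatible reduced word for $w_0$, so controlling the vertices $\nu_u$ for $u \notin W^{\sigma}$ via Lemma~\ref{Lemma:Lusztig-transforms} and equation~\eqref{eq:betas-better} would force you to change the reduced word and track the Lusztig datum through braid moves that leave the $\sigma$-orbit blocks---exactly the situation Lemma~\ref{Lemma:Lusztig-transforms} as stated does not cover. The paper's GGMS-stratum argument is designed precisely to avoid this: it uses the full $W$-indexed intersection $\bigcap_{w \in W} S_w^{\nu_w}$ at once, so the inequality $\nu_w \le_w w(\mu)$ is obtained simultaneously for all $w$ from the existence of a single point, rather than deduced inductively from the $W^{\sigma}$-indexed inequalities.
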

   
   \begin{proof}
    First  I will establish the bijection between AMV cycles. The action of $-\mu$ on the set of AMV cycles gives a bijection between AMV cycles of coweight $(\lambda, \mu)$ and stable AMV cycles of coweight $(\lambda - \mu, 0)$.
    
    Fix a $\sigma$-compatible reduced word $\mathbf{i}$ for $w_0$ expanding the reduced word $\mathbf{i}_{\sigma}$ for $w_{0, \sigma}$. Theorem \ref{THM:Lusztig-index} gives a bijection between $\mathbf{i}$-Lusztig data and all stable AMV cycles, and restricts to a bijection between those of coweight $(\lambda - \mu, 0)$. And by Lemma \ref{Lemma:Lusztig-fp}, there is a bijection between $\sigma$-invariant $\mathbf{i}$-Lusztig strata of coweight $(\lambda - \mu, 0)$ and $\mathbf{i}_{\sigma}$-Lusztig strata of coweight $(\lambda - \mu, 0)$, given by taking fixed points.
    
    Then by composing the corresponding bijection between $\mathbf{i}_{\sigma}$-Lusztig strata of coweight $(\lambda - \mu, 0)$ and AMV cycles of coweight $(\lambda, \mu)$, we have the desired bijection on the level of AMV cycles.
   
    It remains to see that the bijection on AMV cycles restricts to one on MV cycles. Recall Proposition \ref{Prop:Polytope}: MV cycles of coweight $(\lambda, \mu)$ correspond bijectively to those AMV cycles of coweight $(\lambda, \mu)$ contained in $\overline{\GrG{G}^{\mu}}$. Clearly, if $A$ is an AMV cycle and $A^{\sigma} \not\subset \overline{\GrG{G^{\sigma}}^{\mu}}$, then $A \not \subset \overline{\GrG{G}^{\mu}}$. However, the converse is more subtle.
   
    Let $A$ be a $\sigma$-invariant AMV cycle as in the hypotheses, and suppose $A \not \subset \overline{\GrG{G}^{\mu}}$; i.e. $A$ is not an MV cycle. Let $(\nu_w)_{w \in W}$ be the GGMS datum of $A$, the $W$-indexed sequence of vertices in the moment polytope $P_A$. Then the GGMS stratum of $A$ is
    \[
     GGMS(A) :=
     \bigcap_{w \in W} S_w^{\nu_w}.
    \]
    Note that $\overline{\GrG{G}^{\mu}}$ is an AMV cycle of coweight $(-\mu, \mu)$, and thus the closure of its GGMS stratum:
    \[
     \overline{\GrG{G}^{\mu}} = 
     \overline{\bigcap_{w \in W} S_w^{w(\mu)}}
    \]
    Now if $A \not\subset \overline{\GrG{G}^{\mu}}$, then its GGMS stratum does not intersect with $\overline{\GrG{G}^{\mu}}$ at all. Indeed, suppose there exists a complex point
    \[
     p \in 
     \left(GGMS(A) \cap \overline{\GrG{G}^{\mu}}\right) (\CC) =
     \left(\bigcap_{w \in W} S_w^{\nu_w}(\CC)\right) \cap \left(\overline{\bigcap_{w \in W} S_w^{w(\mu)}}(\CC)\right) \subset
     \bigcap_{w \in W} \left(S_w^{\nu_w}(\CC) \cap \overline{S_w^{w(\mu)}}(\CC)\right).
    \]
    Then for each $w \in W$ the closure relations for semi-infinite cells imply $\nu_w \le_w w(\mu)$, so $P_A \subset \mathrm{Conv}(W \cdot \mu)$. By Theorem \ref{THM:Polytope}, this implies $A$ is in fact an MV cycle, contrary to assumption.
   
    By Lemma \ref{Lemma:Polytope}, $GGMS(A)$ is dense in $A$. So if $(A \cap \overline{\GrG{G}^{\mu}})^{\sigma}$ were dense in $A^{\sigma}$, there would have to be some complex point
    \[
     p \in
     \left(GGMS(A)^{\sigma} \cap \overline{\GrG{G^{\sigma}}^{\mu}}\right) (\CC) \subset
     \left(GGMS(A)\cap \overline{\GrG{G}^{\mu}}\right) (\CC),
    \]
    but the last intersection is empty.
   
    As a result, the map
    \[
     \begin{tikzcd}
      \left\{\begin{array}{c} A \subset \GrG{G} \end{array} \middle| \begin{array}{c} \text{$\sigma$-invariant MV cycles}\\ \text{of coweight $(\lambda, \mu)$}\end{array}\right\} \ar[r] &
      \left\{\begin{array}{c} A \subset \GrG{G^{\sigma}} \end{array} \middle| \begin{array}{c} \text{MV cycles of} \\  \text{coweight $(\lambda, \mu)$} \end{array} \right\}
       \\
      A \ar[r, mapsto] &
      \overline{A^{\sigma}} \cap (S_{\sigma})_{w_{0,\sigma}}^{\lambda}
     \end{tikzcd}
    \]
    is a bijection.
   \end{proof}
  
  \section{Eigenvalues}
   \label{S:Eigenvalues}
   
     I would like to prove that equation (\ref{eq:main}) holds for all $\sigma$-invariant $(\lambda, \mu)$. It is known that the number of $\sigma$-invariant MV cycles of coweight $(\lambda, \mu)$ in $\GrG{G}$ is the same as the number of MV cycles of coweight $(\lambda, \mu)$ in $\GrG{G^{\sigma}}$, and that the bijection is given by taking the $\sigma$-fixed points. From a naive understanding of the geometric Satake equivalence, it is thus clear that there \textit{is} a linear map $\sigma': V_{\mu}(\lambda) \to V_{\mu}(\lambda)$ such that equation (\ref{eq:main}) holds, replacing $\sigma$ with $\sigma'$. Indeed, one may simply choose a basis $\{e_A\}$ for $V_{\mu}(\lambda)$ indexed by MV cycles $A$, and let $\sigma'$ be any map such that $\sigma'(e_A)$ is a scalar multiple if $e_{\sigma(A)}$ for all $A$ and $\sigma'(e_A) = e_A$ for $\sigma$-invariant $A$. However, it is not clear at this point that the map constructed from the action of $\sigma$ on $\widehat{G}$ shares these properties. We need to take a more careful look at the geometric Satake equivalence to see that there is indeed a basis $\{e_A\}$ for which $\sigma$ satisfies these properties.
  
  But first, we need to more carefully define the operator we are considering, as well as construct an alternative operator for comparison. In the end, we will need to make some (limited) choice to identify the operators on $V_{\mu}(\lambda)$, so from here I will start decorating them to keep them distinct. 
  
  On the one hand, we have constructed $\widehat{G}$ as the group dual to $G$ using the root datum, and given it an arbitrary pinning (although there is a canonical choice one could make, to me it is arbitrary since I will make an identification that is not canonical in any case). This pinning uniquely determines an automorphism of $\widehat{G}$, which I will now call $\hat{\sigma}$, preserving $\widehat{B}$ and $\widehat{T}$, and compatible with the root homomorphisms $x_{\alpha^{\vee}}$. It is now straightforward to construct an action of $\hat{\sigma}$ on irreducible highest-weight representations $V_{\mu}$ of $\widehat{G}$ when $\mu$ is $\hat{\sigma}$-invariant. Since $\hat{\sigma}^*$ is a tensor auto-equivalence on $\Rep{\CC}{\widehat{G}}$, we have $\hat{\sigma}^*V_{\mu}$ is an irreducible highest-weight representation. Furthermore, given a vector $v \in V_{\mu}$ of weight $\lambda$ and $t \in \widehat{T}$, we have $\hat{\sigma}(t) \cdot v = \sigma^{-1}(\lambda)(t)v$, so as an element of $\hat{\sigma}^*V_{\mu}$, $v$ has weight $\sigma^{-1}(\lambda)$. In particular, the highest weight of $\hat{\sigma}^*V_{\mu}$ is $\sigma^{-1}(\mu)$. In the case $\mu$ is $\sigma$-invariant, we thus have $\hat{\sigma}^*V_{\mu} \cong V_{\mu}$. By Schur's lemma, there is up to scalar a unique isomorphism of representations $\hat{\sigma}^*V_{\mu} \to V_{\mu}$. However, since $\hat{\sigma}^*V_{\mu}$ and $V_{\mu}$ share underlying vector spaces, we may canonically choose the isomorphism which fixes the highest-weight line $V_{\mu}(\mu)$ pointwise. This automorphism on the underlying vector space is the action of $\sigma$ on $V_{\mu}$, and will be known as $\hat{\sigma}_{\mu}: V_{\mu} \to V_{\mu}$. It is difficult, from this construction, to directly deduce precise eigenvalues of $\hat{\sigma}_{\mu}$ for vectors outside of the highest-weight line $V_{\mu}(\mu)$ (and other extreme-weight lines $V_{\mu}(w(\mu))$).
  
  On the other hand, we have a construction of the dual group of $G$ using the Tannakian formalism: $\widetilde{G}$ is the group of fiber functor automorphisms of $P_{L^+G}(\GrG{G})$. I will construct a linear isomorphism $\tilde{\sigma}_{\mu}: \HH^{\bullet}(\GrG{G}, IC_{\mu}) \to \HH^{\bullet}(\GrG{G}, IC_{\mu})$ for $\sigma$-invariant cocharacters $\mu$ of $G$, and identify $\widetilde{G} \cong \widehat{G}$ in such a way that (using uniqueness from Schur's lemma) the automorphisms $\hat{\sigma}_{\mu}$ and $\tilde{\sigma}_{\mu}$ must be equal. However, the identification of $\widetilde{G}$ with $\widehat{G}$ is non-canonical, and in particular is sensitive to the pinning on $G$. In order to identify $\widetilde{G}$ with $\widehat{G}$, I will construct a pinning on $\widetilde{G}$ that is preserved by an automorphism $\tilde{\sigma}$ of $\widetilde{G}$. 
  
  The advantage of considering these "tilde" constructions is that, as an induced map on cohomology groups, it will be much more straightforward to prove that $\tilde{\sigma}_{\mu}$ fixes all basis vectors corresponding to $\sigma$-invariant MV cycles, and thus satisfies equation (\ref{eq:main}).
  
  I will henceforth refer to the functor $\HH^{\bullet}(\GrG{G}, -)$ as $F$, as in ``fiber.'' Similarly, for cocharacters $\mu$ and $\lambda$, let $F_{\lambda} IC_{\mu}$ be the group $H_c^{-2\langle{\rho, \mu - \lambda}\rangle}(S_{w_0}^{\lambda} \cap \GrG{G}^{\mu}, \CC)$.
  
  The map $\tilde{\sigma}_{\mu}$ will be constructed from an identification $IC_{\mu} \cong \sigma^! IC_{\mu}$. (Note that, since $\sigma$ is an \'etale morphism of varieties, $\sigma^*\Aa$ is canonically isomorphic to $\sigma^!\Aa$ for all perverse sheaves $\Aa$ on $\GrG{G}$. Similarly, since $\sigma$ is proper, $\sigma_!\Aa$ is canonically isomorphic to $\sigma_*\Aa$.) Suppose we have an isomorphism $\phi_{\mu}: IC_{\mu} \to \sigma^! IC_{\mu}$. Then, using the counit $\epsilon$ of the adjunction $(\sigma_!, \sigma^!)$ we have an isomorphism
  \[
   \begin{tikzcd}
    \sigma_! IC_{\mu} \ar[r, "\sigma_! \phi_{\mu}"] &
    \sigma_! \sigma^! IC_{\mu} \ar[r, "\epsilon"] &
    IC_{\mu}
   \end{tikzcd}
  \]
  Since $\GrG{G}$ is an ind-proper ind-scheme, $F$ is canonically isomorphic to a direct sum of compactly supported cohomology functors, implying $F \, IC_{\mu} \cong F \sigma_! IC_{\mu}$. Thus, we can compose arrows in the following (commutative) diagram
  \[
   \begin{tikzcd}
    F \, IC_{\mu} \ar[r, "F \phi_{\mu}"] \ar[d, "can"] &
    F \sigma^! IC_{\mu} \ar[d, "can"] &
     \\
    F \sigma_! IC_{\mu} \ar[r, "F \sigma_! \phi_{\mu}"] &
    F \sigma_! \sigma^! IC_{\mu} \ar[r, "F \epsilon"]&
    F \, IC_{\mu}
   \end{tikzcd}
  \]
  to produce a map $\tilde{\sigma}_{\mu} := F \epsilon \circ can \circ F \phi_{\mu}$, given an isomorphism $\phi_{\mu}$.
  
  Similarly to the construction of $\hat{\sigma}_{\mu}$, we can construct $\phi_{\mu}$ canonically. For all $x$ in the smooth locus $\GrG{G}^{\mu}$ of $\overline{\GrG{G}^{\mu}}$, the stalks of both $IC_{\mu}$ and $\sigma^! IC_{\mu}$ at $x$ are one-dimensional and concentrated in degree $-2\langle{\rho, \mu}\rangle$. Furthermore, the basepoint $\varpi^{\mu} x_0$ is preserved by $\sigma$ if $\mu$ is. So let $\phi_{\mu}$ be the unique isomorphism $IC_{\mu} \to \sigma^! IC_{\mu}$ that restricts to the identity on the stalk at $\varpi^{\mu} x_0$.
  
  \begin{Lemma}
  Suppose $\mu$ is a $\sigma$-invariant dominant cocharacter. As constructed above, the map $\tilde{\sigma}_{\mu}$ is a direct sum of natural maps on top-dimensional cohomology groups with compact support, with constant coefficients, induced by the morphisms of varieties $\sigma: S_{w_0}^{\lambda} \cap \GrG{G}^{\mu} \to S_{w_0}^{\sigma(\lambda)} \cap \GrG{G}^{\mu}$. As a result, for all irreducible components $A \subset S_{w_0}^{\lambda} \cap \GrG{G}^{\mu}$, the fundamental class 
  \[
   [A] \in H_c^{2\langle{\rho, \mu - \lambda}\rangle}(S_{w_0}^{\lambda} \cap \GrG{G}^{\mu}, \CC)
  \]
  satisfies $\tilde{\sigma}_{\mu}([A]) \in \CC[\sigma(A)]$. And in particular, if $\sigma(A) = A$, then $\tilde{\sigma}_{\mu}([A]) = [A]$ exactly.
  \end{Lemma}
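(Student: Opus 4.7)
The plan is to combine the decomposition of the fiber functor from Theorem \ref{THM:MV}(\ref{THM:MV:iii}) and (\ref{THM:MV:iv}) with the observation that the normalization of $\phi_\mu$ forces it to act as the identity on the constant sheaf over the smooth locus $\GrG{G}^\mu$. Once that is established, the adjunction counit entering the definition of $\tilde{\sigma}_\mu$ reduces to the ordinary proper-pushforward isomorphism, and so $\tilde{\sigma}_\mu$ factors through the geometric morphism of varieties $\sigma \colon S_{w_0}^\lambda \cap \GrG{G}^\mu \to S_{w_0}^{\sigma(\lambda)} \cap \GrG{G}^\mu$, under which fundamental classes go to fundamental classes.

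First I would decompose $F\,IC_\mu = \bigoplus_\lambda F_\lambda IC_\mu$, which is natural in $IC_\mu$. Because $\sigma(S_{w_0}^\lambda) = S_{w_0}^{\sigma(\lambda)}$ (using that $w_0$ is $\sigma$-invariant, since $\sigma$ preserves $B$) and $\sigma(\GrG{G}^\mu) = \GrG{G}^\mu$, the morphism $\sigma \colon \GrG{G} \to \GrG{G}$ restricts, on each piece of the MV stratification, to the stated isomorphism between $S_{w_0}^\lambda \cap \GrG{G}^\mu$ and $S_{w_0}^{\sigma(\lambda)} \cap \GrG{G}^\mu$, and therefore induces a natural map between the corresponding summands $F_\lambda IC_\mu$ and $F_{\sigma(\lambda)} IC_\mu$. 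Call the assembled endomorphism of $F\,IC_\mu$ the \emph{geometric} $\sigma$-action; the task is to show $\tilde{\sigma}_\mu$ equals it.

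Next I would restrict attention to the smooth, open locus. On $\GrG{G}^\mu$ the sheaf $IC_\mu$ is $\CC[2\langle\rho,\mu\rangle]$, and since $\sigma$ is an automorphism of $\GrG{G}$ (in particular \'etale) we have canonically $\sigma^! IC_\mu|_{\GrG{G}^\mu} = \CC[2\langle\rho,\mu\rangle]$. The normalization of $\phi_\mu$ at the stalk over $\varpi^\mu x_0$ therefore pins down $\phi_\mu|_{\GrG{G}^\mu}$ to be the identity, not merely a scalar (here one uses that $\GrG{G}^\mu$ is connected, so the identity is the unique isomorphism of the constant sheaf extending any prescribed value at one stalk). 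The open-closed exact sequence now tells us that the cohomology $F_\lambda IC_\mu$ lives in the top degree of compactly supported cohomology of the smooth open subset $(S_{w_0}^\lambda \cap \GrG{G}^\mu)^{\mathrm{sm}}$ with constant coefficients, and that the inclusion of the smooth locus induces an isomorphism on these top-dimensional classes, since the complement has strictly smaller dimension.

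Finally I would unravel $\tilde{\sigma}_\mu = F\epsilon \circ \mathrm{can} \circ F\phi_\mu$. By functoriality of the $(\sigma_!, \sigma^!)$-adjunction the counit $\epsilon \colon \sigma_! \sigma^! \to \mathrm{id}$, after applying $F$, is identified on compactly supported cohomology of an orbit stratum with the map induced by the isomorphism of varieties $\sigma$ on constant coefficients; combined with the first two steps this identifies $\tilde{\sigma}_\mu$ summand by summand with the geometric $\sigma$-action. Therefore $\tilde{\sigma}_\mu([A]) = [\sigma(A)]$ for every irreducible component $A$, and in particular $\tilde{\sigma}_\mu([A]) = [A]$ when $\sigma(A) = A$. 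The main obstacle will be the careful diagram chase in this last step: one must check that the abstract adjunction counit really does unwind into the ordinary pullback-pushforward on compactly supported top cohomology, and that the shifts and duality twists align, which is where the hypothesis $\sigma(\mu) = \mu$ (so that $\sigma$ preserves the smooth basepoint $\varpi^\mu x_0$ used to normalize $\phi_\mu$) is essential.
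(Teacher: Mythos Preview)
Your proposal is correct and follows essentially the same approach as the paper: both decompose $F\,IC_\mu$ via the geometric Satake isomorphism $GSE_\mu$, then identify $\tilde{\sigma}_\mu$ summand by summand with the trace map on compactly supported cohomology induced by the morphism of varieties $\sigma$, using the normalization of $\phi_\mu$ at $\varpi^\mu x_0$ to pin down the scalar. The paper frames the induced map as the trace map $\mathrm{Tr}^i_\sigma$ of \cite[0GJY]{SP22}, while you spell out the same construction directly; your observation that connectedness of $\GrG{G}^\mu$ forces $\phi_\mu|_{\GrG{G}^\mu}$ to be the identity (not merely a scalar) in fact yields the slightly stronger conclusion $\tilde{\sigma}_\mu([A]) = [\sigma(A)]$ for every component $A$, whereas the paper only asserts $\tilde{\sigma}_\mu([A]) \in \CC[\sigma(A)]$ in general.
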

  
  \begin{proof}
   Recall by the geometric Satake correspondence, specifically \ref{THM:MV} \ref{THM:MV:iii} and \ref{THM:MV:iv}, that there is a natural, canonical isomorphism
   \[
    GSE_{\mu}:
    F \, IC_{\mu} \overset{\sim}{\to}
    \bigoplus_{\lambda \in X_*(T)} H_c^{-2\langle{\rho, \lambda}\rangle}(S_{w_0}^{\lambda}, IC_{\mu}) \overset{\sim}{\to}
    \bigoplus_{\lambda \in Wt(\mu)} H_c^{2\langle{\rho, \mu - \lambda}\rangle} (S_{w_0}^{\lambda} \cap \GrG{G}^{\mu}, \CC).
   \]
   So the goal will be to show that a natural map on cohomology groups induced by the morphism $\sigma: S_{w_0}^{\lambda} \cap \GrG{G}^{\mu} \to S_{w_0}^{\sigma(\lambda)} \cap \GrG{G}^{\mu}$ commutes with $GSE_{\mu}$ and fixes pointwise the cohomology groups corresponding to $\sigma$-invariant MV cycles.
   
   Cohomology with compact support is a covariant functor. That is, given a map of sheaves $\phi: \Ff \to \Gg$ on a variety $Y$, we have for each $i$ a map on cohomology with compact support
   \[
    H_c^i (\phi): 
    H_c^i (Y, \Ff) \to
    H_c^ i(Y, \Gg)
   \]
   Given a morphism of varieties $f: X \to Y$, we have naturally induced such a map on sheaves $\epsilon: f_!f^!\CC_Y \to \CC_Y$, where $\epsilon$ is the counit of the adjunction $(f_!, f^!)$. Furthermore, $H_c^i(Y, f_!f^! \CC_Y)$ is canonically isomorphic to $H_c^i(X, f^! \CC_Y)$. If $f$ is an isomorphism of varieties, this is further isomorphic to $H_c^i(X, \CC_X)$ as $f^! \CC_Y \cong \CC_X$. Thus, after choosing an appropriate isomorphism $\CC_X \to f^! \CC_Y$, we have a map
   \[
    \mathrm{Tr}^i_f: H_c^i(X, \CC) \to H_c^i(Y, \CC).
   \]
   This map is constructed in \cite[0GJY]{SP22} and referred to as the trace map of $f$.
   
   Let $i = 2\langle{\rho, \mu - \lambda}\rangle$, where $\mu$ is a $\sigma$-invariant cocharacter and $\lambda \in Wt(\mu)$. Then the cohomology group $H_c^i(S_{w_0}^{\lambda} \cap \GrG{G}^{\mu}, \CC)$ is spanned by fundamental classes $[A]$, where $A$ is an MV cycle of coweight $(\lambda, \mu)$, and $H_c^i(S_{w_0}^{\sigma(\lambda)} \cap \GrG{G}^{\mu}, \CC)$ is spanned by fundamental classes $[\sigma(A)]$. Looking on stalks, we see that $\mathrm{Tr}^i_{\sigma} ([A])$ is a scalar multiple of $[\sigma(A)]$. Supposing $\lambda$ is $\sigma$-invariant and choosing the normalization $\CC \to \sigma^!\CC$ corresponding to that for $\tilde{\sigma}_{\mu}$, which is identity on stalks preserved by $\sigma$, we get that in the case $\sigma(A) = A$, $\mathrm{Tr}^i_{\sigma}([A]) = [A]$ exactly.
   
   By the similarity in construction of the two horizontal maps, the following diagram commutes:
   \[
    \begin{tikzcd}
     F \, IC_{\mu} \ar[r, "\tilde{\sigma}_{\mu}"] \ar[d, "GSE_{\mu}"] &
     F \, IC_{\mu} \ar[d, "GSE_{\mu}"] &
      \\
     \displaystyle \bigoplus_{\lambda \in Wt(\mu)} H_c^{2\langle{\rho, \mu - \lambda}\rangle}(S_{w_0}^{\lambda} \cap \GrG{G}, \CC) \ar[r, "\mathrm{Tr}^i_{\sigma}"] &
     \displaystyle \bigoplus_{\lambda \in Wt(\mu)} H_c^{2\langle{\rho, \mu - \lambda}\rangle}(S_{w_0}^{\sigma(\lambda)} \cap \GrG{G}, \CC)
    \end{tikzcd}
   \]
   Therefore $\tilde{\sigma}_{\mu}$ acts on fundamental classes as expected.
  \end{proof}
  
  Note that $\tilde{\sigma}_{\mu}$ thus satisfies (\ref{eq:main}).

  In order to identify the actions $\tilde{\sigma}$ and $\hat{\sigma}$, we extend the construction of $\tilde{\sigma}_{\mu}$ to cases where $\mu$ is not $\sigma$-invariant, constructing an automorphism $\tilde{\sigma}: \widetilde{G} \to \widetilde{G}$. Given such an automorphism $\tilde{\sigma}$, we can verify that the linear map $\tilde{\sigma}^*V_{\mu} \to V_{\mu}$ on induced on $\sigma$-invariant highest-weight representations of $\widetilde{G}$ by the automorphism $\tilde{\sigma}$ is equal to the linear operator $\tilde{\sigma}_{\mu}$ defined above. Then, in order to prove that $\hat{\sigma}_{\mu}$ satisfies equation (\ref{eq:main}), it will be sufficient to verify that $\widetilde{G}$ may be identified with $\widehat{G}$ in such a way that the automorphisms $\tilde{\sigma}$ and $\hat{\sigma}$ commute with the identification $\widetilde{G} \to \widehat{G}$.
  
  Consider the following proposition, an immediate consequence of definitions in the first chapter of \cite{DM89}:
  
  \begin{Prop}
   \label{Prop:auto}
   Let $\widehat{G}$ be a reductive group, and let $F: \Rep{\CC}{\widehat{G}} \to \Vect{\CC}$ be the natural fiber functor. Given a tensor auto-equivalence $T: (\Rep{\CC}{\widehat{G}}, \otimes) \to (\Rep{\CC}{\widehat{G}}, \otimes)$ and an isomorphism of fiber functors $\phi: FT \to F$, there is a corresponding automorphism $\tau$ of $\widehat{G}$, given by $\tau(g) = \phi \circ g \circ \phi^{-1}$. In the other direction, an automorphism $\tau$ can be used to construct such a pair, using the functor $\tau^*(\rho, V) = (\rho \circ \tau, V)$, and the isomorphism $F\tau^* \to F$ which takes identity on objects in $\Vect{\CC}$.
   \[
    \begin{array}{ccc}
     \Aut{\parens{\widehat{G}}} &
     \begin{array}{c} \longrightarrow \\ \longleftarrow \end{array}  &
     \left\{\begin{array}{c}\text{tensor auto-equivalences with} \\ \text{an isomorphism of fiber functors} \\ T: (\Rep{\CC}{\widehat{G}}, \otimes) \to (\Rep{\CC}{\widehat{G}}, \otimes)  \\ \phi: FT \to F \end{array}\right\}
  	  \\
     \tau &
     \longmapsto &
     (\tau^*, \id{})
      \\
     (g \mapsto \phi \circ g \circ \phi^{-1}) &
     \longmapsfrom &
     (T, \phi)
    \end{array}
   \]
  \end{Prop}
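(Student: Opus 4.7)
The plan is to apply the Tannakian reconstruction theorem from \cite{DM89}, which identifies $\widehat{G}$ with the group $\mathrm{Aut}^{\otimes}(F)$ of tensor natural automorphisms of the fiber functor $F \colon \Rep{\CC}{\widehat{G}} \to \Vect{\CC}$. Under this identification, each $g \in \widehat{G}$ corresponds to a family of linear automorphisms $(g_V \colon V \to V)_V$, indexed by objects of $\Rep{\CC}{\widehat{G}}$, natural in $V$ and compatible with tensor products. The proposition is then a matter of unpacking this identification together with some routine checks.

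First, I would verify the forward direction $\tau \mapsto (\tau^*, \id{})$. The pullback $\tau^*(\rho, V) := (\rho \circ \tau, V)$ is a tensor auto-equivalence because the tensor product in $\Rep{\CC}{\widehat{G}}$ is defined via the diagonal $\Delta \colon \widehat{G} \to \widehat{G} \times \widehat{G}$ and $\Delta \circ \tau = (\tau \times \tau) \circ \Delta$; its quasi-inverse is $(\tau^{-1})^*$. Since $\tau^*$ does not modify the underlying vector space, we have $F\tau^* = F$ strictly, so $\id{} \colon F\tau^* \to F$ is a valid isomorphism of fiber functors.

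For the backward direction, given $(T, \phi)$, I would define $\tau(g)$ to be the tensor natural automorphism of $F$ with components $\tau(g)_V := \phi_V \circ g_{T(V)} \circ \phi_V^{-1}$. Because $g$ is a tensor-compatible natural automorphism and $\phi$ is a tensor natural isomorphism, the composite $\tau(g)$ is again tensor-compatible and natural, so by Tannakian reconstruction it corresponds to an element of $\widehat{G}$. Short diagram chases show that $\tau \colon \widehat{G} \to \widehat{G}$ is a group homomorphism, and inverting $T$ (up to natural tensor isomorphism) and $\phi$ yields an inverse for $\tau$, confirming that it is an automorphism.

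Finally, I would check that the two constructions are mutually inverse, up to the natural notion of equivalence of pairs $(T, \phi)$. Starting from $\tau$, applying the backward construction to $(\tau^*, \id{})$ recovers $\tau$ because, for $V = (V, \rho)$, one has $g_{\tau^*(V)} = \rho(\tau(g)) = \tau(g)_V$, so the recipe $g \mapsto \phi \circ g \circ \phi^{-1}$ with $\phi = \id{}$ gives back the original $\tau$. Conversely, starting from $(T, \phi)$ and producing $(\tau^*, \id{})$, the family $(\phi_V^{-1})_V$ furnishes a natural tensor isomorphism $\tau^* \to T$ intertwining the fiber functor isomorphisms. The main obstacle here is not computational but interpretive: one must parse the formula $g \mapsto \phi \circ g \circ \phi^{-1}$ as componentwise conjugation of the family $(g_V)_V$ along $\phi$, after which everything reduces to the Tannakian formalism of \cite{DM89}.
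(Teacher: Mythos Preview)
Your proposal is correct and aligns with the paper's approach: the paper does not give a proof at all, simply stating that the proposition is ``an immediate consequence of definitions in the first chapter of \cite{DM89},'' and your argument is precisely the unpacking of those Tannakian definitions. You have supplied the details the paper leaves implicit, including the observation (which the paper also notes immediately afterward) that one must impose an equivalence relation on pairs $(T,\phi)$ to obtain a genuine bijection.
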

  
  We could impose an equivalence relation on the right hand side above and tautologically the pairs $(T, \phi)$ modulo equivalence would be in bijection with automorphisms. All we need is a sufficient condition for construction of an automorphism $\tilde{\sigma}: \widetilde{G} \to \widetilde{G}$.
  
  We already have a tensor auto-equivalence of functors $\sigma^!: P_{L^+G}(\GrG{G}) \to P_{L^+G}(\GrG{G})$, so we want an isomorphism of fiber functors $\phi: F \sigma^! \to F$. As before, we can use the counit $\epsilon: \sigma_! \sigma^! IC_{\mu} \to IC_{\mu}$.
  \[
   \begin{tikzcd}
    F \sigma^! IC_{\mu} \ar[r, "can"] &
    F \sigma_!\sigma^! IC_{\mu} \ar[r, "F \epsilon"] &
    F \, IC_{\mu}
   \end{tikzcd}
  \]
  So we let $\phi = F \epsilon \circ can$; then $\tilde{\sigma}_{\mu} (g) = \phi \circ g \circ \phi^{-1}$, as needed.
  
  Finally, we need to compare $\tilde{\sigma}$ with $\hat{\sigma}$. Consider that, for $\sigma$-invariant $\mu$, $\tilde{\sigma}$ satisfies a commutative diagram
  \begin{equation}
    \label{eq:g-square}
    \begin{tikzcd}
     F \, IC_{\mu} \ar[r, "\tilde{\sigma}_{\mu}"] \ar[d, "g" left] &
     F \, IC_{\mu} \ar[d, "\tilde{\sigma}(g)"] &&
     F \sigma^! IC_{\mu} \ar[ll, "F \epsilon \circ can" above] \ar[d, "g"]
      \\
     F \, IC_{\mu} \ar[r, "\tilde{\sigma}_{\mu}"] &
     F \, IC_{\mu} &&
     F \sigma^! IC_{\mu} \ar[ll, "F \epsilon \circ can" above]
    \end{tikzcd}
   \end{equation}
   In fact $\tilde{\sigma}_{\mu}$ factors through $F \sigma^! IC_{\mu}$, implying the linear operator on $F \, IC_{\mu}$ induced by the automorphism $\tilde{\sigma}$ is exactly $\tilde{\sigma}_{\mu}$.
  
  Now we turn our attention to the question of identifying $\widetilde{G}$ and $\widehat{G}$. They are by GSE \ref{THM:MV} \ref{THM:MV:v} abstractly isomorphic. Since a pinning-preserving automorphism is determined uniquely by its action on the Dynkin diagram and the pinning it preserves, it is sufficient to prove that there is \textit{some} pinning preserved by $\tilde{\sigma}$; any choice of such a preserved pinning will imply $\tilde{\sigma} = \hat{\sigma}$ after identification $\widetilde{G} \cong \widehat{G}$. Note that the pinning preserved by $\tilde{\sigma}$ depends on the isomorphism $\sigma: \GrG{G} \to \GrG{G}$, which ultimately depends on the pinning on $G$. It is for this reason that the identification $\widetilde{G} \cong \widehat{G}$ is not canonical.
   
   To begin with, consider that $\widetilde{G}$ has a natural choice of maximal torus and Borel $\widetilde{T} \subset \widetilde{B} \subset \widetilde{G}$, implying that identification with $\widehat{G}$ may only vary by conjugation by $\widetilde{T}$. Indeed, let $\widetilde{T} \subset \widetilde{G}$ consist of those fiber functor automorphisms that preserve weight spaces of all representations, and $\widetilde{B} \subset \widetilde{G}$ consist of those fiber functor automorphisms that preserve the positive cone of weight spaces. In particular, $\widetilde{B}$ is generated by $\widetilde{T}$ and $\widetilde{U}_{\alpha^{\vee}}$ for $\alpha^{\vee} \in \Phi^{\vee, +}$, where we have $\widetilde{U}_{\alpha^{\vee}}$ defined by the property
   \begin{equation}
    \label{eq:Ualpha}
    g(F_{\lambda} IC_{\mu}) \subset
    F_{\lambda + \alpha^{\vee}} IC_{\mu}.
   \end{equation}

   Since the action of $\tilde{\sigma}$ on $V_{\mu}$ permutes weight spaces according to the action of $\sigma$ on $X_*(T)$, the group automorphism $\tilde{\sigma}$ preserves $\widetilde{T}$ and $\widetilde{B}$. And $\widetilde{T}$ can be identified with $\widehat{T}$, such that $X_*(\widetilde{T}) = X_*(\widehat{T})$. In particular, $\tilde{\sigma}|_{\widetilde{T}} = \hat{\sigma}|_{\widehat{T}}$. Similarly, $\tilde{\sigma}$ permutes root subgroups according to the action of $\sigma$ on $\Phi^{\vee}$. Indeed, equations (\ref{eq:g-square}) and (\ref{eq:Ualpha}) imply that $g \in \widetilde{U}_{\alpha^{\vee}}$ maps to $\tilde{\sigma}(g) \in \widetilde{U}_{\sigma(\alpha^{\vee})}$.
   
    It remains to be seen that there exists a pinning $\{x_{\alpha^{\vee}_i}\}_{\alpha^{\vee}_i \in \Pi^{\vee}}$ preserved by $\tilde{\sigma}$. However, if such a pinning exists, in each $\sigma$-orbit $\eta \subset \Pi^{\vee}$, one root homomorphism $x_{\alpha^{\vee}_i}: \GG_a \to \widetilde{U}_{\alpha^{\vee}_i}$ may be determined arbitrarily, with $x_{\sigma(\alpha^{\vee}_i)} := \tilde{\sigma} \circ x_{\alpha^{\vee}_i}$. In the case $\sigma$ acts freely on a simple root, i.e. $|\eta|$ is equal to the order of $\sigma$, there is no further obstruction: an arbitrary choice of pinning for one $\alpha^{\vee}_i \in \eta$ will determine a root homomorphism respected by $\tilde{\sigma}$ for all $\alpha^{\vee}_j \in \eta$. However, if $\sigma^n(\alpha^{\vee}_i) = \alpha^{\vee}_i$ for some $n$ less than the order of $\sigma$, we need to know $\sigma^n$ preserves $\widetilde{U}_{\alpha^{\vee}_i}$ pointwise. It is sufficient to consider the case $n=1$, as in the following lemma of Hong.
   
   \begin{Lemma}[\cite{Ho09} Lemma 4.3]
    \label{Lemma:Lie}
    Suppose some simple $\alpha^{\vee}_i \in \Pi^{\vee}$ is $\sigma$-invariant, i.e. $\sigma(\alpha^{\vee}_i) = \alpha^{\vee}_i$. Then $\tilde{\sigma}$ fixes $\widetilde{U}_{\alpha^{\vee}_i}$ pointwise.
   \end{Lemma}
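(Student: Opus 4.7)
The plan is to reduce the lemma to a one-dimensional computation in a carefully chosen representation, combining the Tannakian compatibility with the Chevalley relation in $\widetilde{\mathfrak{g}}$. Since $\sigma$ fixes $\alpha^{\vee}_i$, the automorphism $\tilde{\sigma}$ stabilises $\widetilde{U}_{\alpha^{\vee}_i}$ and restricts there to a group automorphism of $\mathbb{G}_a$, which must be of the form $x(t) \mapsto x(ct)$ for a unique $c \in \mathbb{C}^{\times}$, once we fix any isomorphism $x \colon \mathbb{G}_a \xrightarrow{\sim} \widetilde{U}_{\alpha^{\vee}_i}$. The goal is then to show $c = 1$.

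I first record a Lie-algebra constraint. Let $e_i = \frac{d}{dt}\big|_{t=0} x(t) \in \widetilde{\mathfrak{g}}_{\alpha^{\vee}_i}$, and let $f_i \in \widetilde{\mathfrak{g}}_{-\alpha^{\vee}_i}$ be a Chevalley partner with $[e_i, f_i] = h_i = \alpha_i \in X_*(\widetilde{T})$. Because $\sigma$ fixes $\alpha^{\vee}_i$, $-\alpha^{\vee}_i$, and $\alpha_i$, the derivative $\tilde{\sigma}_*$ stabilises each root space $\widetilde{\mathfrak{g}}_{\pm \alpha^{\vee}_i}$ and fixes $h_i$. Writing $\tilde{\sigma}_*(e_i) = c\, e_i$ and $\tilde{\sigma}_*(f_i) = c' f_i$, the relation $[e_i, f_i] = h_i$ forces $c c' = 1$, hence $c' = c^{-1}$.

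Next I specialise to the $\sigma$-invariant dominant cocharacter $\mu = 2\rho^{\vee} = \sum_{\alpha > 0} \alpha^{\vee} \in X_*(T)$, which satisfies $\langle \mu, \alpha_i \rangle = 2$. Kostant's multiplicity formula, whose only surviving summand in this range is the $w = e$ term contributing $P(\alpha^{\vee}_i) = 1$ (where $P$ is the Kostant partition function of $\widehat{G}$, the remaining dot-action terms vanishing because the shifts fall outside $\sum_{\beta > 0} \mathbb{Z}_{\geq 0}\, \beta$), gives $\dim V_{\mu}(\mu - \alpha^{\vee}_i) = \dim F_{\mu - \alpha^{\vee}_i} IC_{\mu} = 1$. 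Consequently there is a unique MV cycle $A$ of coweight $(\mu - \alpha^{\vee}_i, \mu)$; since $\sigma$ permutes MV cycles of this coweight and there is only one, $A$ is automatically $\sigma$-invariant, and the preceding lemma of Section \ref{S:Eigenvalues} yields $\tilde{\sigma}_{\mu}([A]) = [A]$ exactly.

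Finally, by $\mathfrak{sl}_2$-theory for the $\alpha^{\vee}_i$-string through $\mu$, the vector $f_i v_{\mu}$ is a nonzero element of the one-dimensional space $F_{\mu - \alpha^{\vee}_i} IC_{\mu}$, so $f_i v_{\mu} = \kappa\, [A]$ for some nonzero $\kappa$. On the other hand, Tannakian compatibility together with the normalisation $\tilde{\sigma}_{\mu}(v_{\mu}) = v_{\mu}$ gives $\tilde{\sigma}_{\mu}(f_i v_{\mu}) = \tilde{\sigma}_*(f_i) \cdot \tilde{\sigma}_{\mu}(v_{\mu}) = c^{-1} f_i v_{\mu}$, so $\tilde{\sigma}_{\mu}([A]) = c^{-1}[A]$. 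Comparing with $\tilde{\sigma}_{\mu}([A]) = [A]$ forces $c = 1$, completing the proof. The main subtlety is securing the multiplicity-one bound $\dim F_{\mu - \alpha^{\vee}_i} IC_{\mu} = 1$; once Kostant's formula supplies it, uniqueness of the MV cycle and the rigidity of the $\tilde{\sigma}_{\mu}$-action close the argument with no further input.
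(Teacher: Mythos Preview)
Your argument is correct and takes a genuinely different route from the paper's. The paper reduces to the almost-simple case, realises $\tilde{\mathfrak g}$ as the highest-weight module $V_{\gamma^{\vee}}$ for the highest coroot $\gamma^{\vee}$, and compares the two operators $d\tilde\sigma$ and $\tilde\sigma_{\gamma^{\vee}}$ on the whole adjoint representation: Schur's lemma forces $d\tilde\sigma = c\,\tilde\sigma_{\gamma^{\vee}}$, the bracket $[e_{\gamma^{\vee}},e_{-\gamma^{\vee}}]$ yields $c^2=1$, and a trace comparison on $\tilde{\mathfrak h}$ rules out $c=-1$. By contrast, you never leave the single simple root $\alpha^{\vee}_i$: the Chevalley relation already pins down $\tilde\sigma_*(f_i)=c^{-1}f_i$, and then a one-line computation in the one-dimensional weight space $V_{2\rho^{\vee}}(2\rho^{\vee}-\alpha^{\vee}_i)$, whose unique MV cycle is automatically $\sigma$-fixed, forces $c^{-1}=1$.

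Your approach is more elementary and more uniform: it avoids the reduction to almost-simple factors, avoids invoking Schur's lemma on the adjoint representation, and sidesteps the trace argument entirely. Two minor remarks. First, the appeal to Kostant's multiplicity formula is heavier than needed; the bound $\dim V_{\mu}(\mu-\alpha^{\vee}_i)\le 1$ is immediate from the PBW filtration (the only monomial in the $f$'s of weight $-\alpha^{\vee}_i$ is $f_i$), and nonvanishing is just $\langle\alpha_i,\mu\rangle>0$. Second, the compatibility $\tilde\sigma_{\mu}(X\cdot v)=\tilde\sigma_*(X)\cdot\tilde\sigma_{\mu}(v)$ you use is exactly the differentiated form of diagram~(\ref{eq:g-square}), which the paper only states for the adjoint representation but which holds verbatim for every $\sigma$-invariant $\mu$; it would be worth saying this explicitly.
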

   
   \begin{proof}
    We may assume $\widetilde{G}$ is semisimple and almost simple. Indeed, $\widetilde{G}^{der} = \widetilde{G}_1 \times \cdots \times \widetilde{G}_m$ where each $\widetilde{G}_j$ is semisimple and almost simple, and the inclusion $\widetilde{U}_{\alpha^{\vee}_i} \hookrightarrow \widetilde{G}$ factors through some $\widetilde{G}_j \to \widetilde{G}^{der} \to \widetilde{G}$. If $\sigma$ preserves $\alpha^{\vee}_i$, this inclusion commutes with $\tilde{\sigma}$. So suppose $\widetilde{G} = \widetilde{G}_j$.
    
    We compare two different actions of $\sigma$ on $\tilde{\frakg}$, the Lie algebra of $\widetilde{G}$. The first, $d\tilde{\sigma}$, comes from differentiating the automorphism $\tilde{\sigma}$ at the identity. The second, $\tilde{\sigma}_{\gamma^{\vee}}$, comes from viewing $\tilde{\frakg}$ as the representation $V_{\gamma^{\vee}}$, where $\gamma^{\vee} \in \Phi^{\vee}$ is the highest coroot. As noted earlier, $\tilde{\sigma}_{\gamma^{\vee}}$ fixes the weight space $F_{\alpha^{\vee}_i} IC_{\gamma^{\vee}} = \tilde{\frakg}_{\alpha^{\vee}_i}$ pointwise, so it is sufficient to prove that $d\tilde{\sigma} = \tilde{\sigma}_{\gamma^{\vee}}$, as tangent space isomorphisms, implying the automorphism $\tilde{\sigma}$ fixes $\widetilde{U}_{\alpha^{\vee}_i}$ pointwise.
    
    For each $\alpha^{\vee} \in \Phi^{\vee}$, let $e_{\alpha^{\vee}}$ be the fundamental class of the (unique) MV cycle of coweight $(\alpha^{\vee}, \gamma^{\vee})$ in $\GrG{G}$. Note that for $\sigma$-invariant $\alpha^{\vee}$, we have $\tilde{\sigma}_{\gamma^{\vee}} (e_{\alpha^{\vee}}) = e_{\alpha^{\vee}}$. We can also identify $\tilde{\frakh} = Lie(\widetilde{T})$ with $X_*(\widetilde{T}) \otimes_{\ZZ} \CC$, to understand the action of $d\tilde{\sigma}$ on $\tilde{\frakh}$.
    
    Schur's lemma implies that the two maps may only differ by a constant scalar: let $d\tilde{\sigma} = c \cdot \tilde{\sigma}_{\gamma^{\vee}}$. Furthermore, by commutativity of diagram (\ref{eq:g-square}), differentiating the adjoint action of $\widetilde{G}$ on $\tilde{\frakg}$, we have $\tilde{\sigma}_{\gamma^{\vee}} ([a, b]) = [d\tilde{\sigma}(a),\tilde{\sigma}_{\gamma^{\vee}} (b)]$.
    
    The highest root $\gamma \in \Phi$ is $\sigma$-invariant, so $\tilde{\sigma}_{\gamma^{\vee}}$ fixes the image of $\gamma: \GG_m \to \widetilde{T}$ pointwise and $d \tilde{\sigma}$ fixes $\CC \cdot \gamma \subset \tilde{\frakh}$ as well. So $d\tilde{\sigma}([e_{\gamma^{\vee}}, e_{-\gamma^{\vee}}]) = [e_{\gamma^{\vee}}, e_{-\gamma^{\vee}}] \in \CC \cdot \gamma$. Since $d\tilde{\sigma}$ is a Lie algebra homomorphism, we also have 
    \[
     d\tilde{\sigma}([e_{\gamma^{\vee}},e_{-\gamma^{\vee}}]) = 
     [d\tilde{\sigma}(e_{\gamma^{\vee}}),d\tilde{\sigma}(e_{-\gamma^{\vee}})] = 
     c^2 \cdot [\tilde{\sigma}_{\gamma^{\vee}} (e_{\gamma^{\vee}}), \tilde{\sigma}_{\gamma^{\vee}} (e_{-\gamma^{\vee}})] = 
     c^2 \cdot [e_{\gamma^{\vee}}, e_{-\gamma^{\vee}}].
    \]
    The last equality holds by $\sigma$-invariance of $\gamma^{\vee}$ and $-\gamma^{\vee}$. And so $c^2 = 1$, and we must have $c = \pm 1$.
    
    Now by comparing trace of $\tilde{\sigma}_{\gamma^{\vee}}$ and $d\tilde{\sigma}$ on $\tilde{\frakh}$, we see $c \ne -1$. In particular, $\tilde{\sigma}_{\gamma^{\vee}}$ preserves $\tilde{\frakh} = F_0 IC_{\gamma^{\vee}}$, so equation (\ref{eq:main}) implies that $\trres{\tilde{\sigma}}{\tilde{\frakh}} \ge 0$. Similarly, since $\sigma$ acts on $X^*(T) = X_*(\widetilde{T})$ by permutation of characters forming a basis, we have $\trres{d\tilde{\sigma}}{X_*(\widetilde{T}) \otimes_{\ZZ} K} \ge 0$ as well. But by assumption, there is a $\sigma$-invariant simple coroot $\alpha^{\vee}_i$, and so $\trres{d\tilde{\sigma}}{X_*(\widetilde{T}) \otimes_{\ZZ} K} > 0$. So the ratio of those two traces, $c = \tr{(d\tilde{\sigma})}/\tr{(\tilde{\sigma}_{\gamma^{\vee}})}$, must be nonnegative, hence $c=1$.
   \end{proof}
    
  \section{Root data and an application}
   \label{S:Haines}
   
   Here I will be explicit about the root datum and pinning of $\widehat{G}$ and $\widehat{G^{\sigma, \circ}}$, both in terms of the pinned root datum tuple, and more simply the root system. I will also prove Theorem 7.7 of \cite{Ha18}, which was originally stated without proof. 
   
   Suppose $G$ is a quasi-split, connected, reductive group over a non-Archimedean local field $F$. Then let $\Sigma$ be the root system of $G$ and let $\breve\Sigma$ be the \'echelonnage root system of $G_{\breve{F}}$, as defined in \cite{Ha18}. By Corollary 5.3 in \cite{Ha18}, $\breve{\Sigma}^\vee$ is the root system for $\widehat{G}^{I, \circ}$, where $I$ is the inertia group of $F$. Then, in light of Theorem \ref{THM:main}, to prove Theorem 7.7 of \cite{Ha18} it is sufficient to prove that $N'_\tau(\breve \Sigma^\vee)$ is equal to the set of roots of $\widehat{\widehat{\widehat{G}^{I, \circ}}^{\tau, \circ}}$.
   
   We will work with pinned root data. According to the classification of connected reductive groups (see, for instance, \cite{Sp79}), a group $H$ over an algebraically closed field is determined up to isomorphism by its root datum: a quadruple $(X, \Phi, X^{\vee}, \Phi^{\vee})$, where $X$ and $X^{\vee}$ are dual, finitely generated, free abelian groups; and $\Phi \subset X$ and $\Phi^{\vee} \subset X^{\vee}$ are dual reduced root systems. The group $H$ is determined up to inner automorphism if the root datum is associated to a particular choice of maximal torus $T \subset H$. If the root datum is based, or endowed with a system of simple roots and coroots $\Pi \subset \Phi$ and $\Pi^{\vee} \subset \Phi^{\vee}$ corresponding to a choice of Borel $T \subset B \subset H$, then the datum determines $H$ up to inner automorphism by an element of $T$. Since the systems of roots and coroots can be constructed from the quadruple $(X, \Pi, X^{\vee}, \Pi^{\vee})$, there is no need to give the entire $6$-tuple. Finally, $H$ is determined up to unique automorphism by a pinning: a collection of root homomorphisms $x_{\alpha_i}: \GG_a \to SL_2 \to H$ for $\alpha_i \in \Pi$.
   
   When we say $\sigma$ preserves a pinning of a connected reductive group $H$ over an algebraically closed field $K$, we mean that if $H$ has pinned root datum $(X, \Pi, X^{\vee}, \Pi^{\vee}, \{x_{\alpha_i}\})$, then $\sigma$ preserves $T \subset B \subset H$, and $\sigma \circ x_{\alpha_i} = x_{\sigma(\alpha_i)}$ for each $\alpha_i \in \Pi$. Then $\sigma$ also acts on $X$, $\Pi$, $X^{\vee}$, and $\Pi^{\vee}$, preserving Dynkin diagram edges and abelian group structure.
   
   \begin{Prop}
    \label{Prop:root-data}
    Let $G$ be a complex, connected, reductive group with pinning $T \subset B \subset G$ and $\{x_{\alpha_i}\}_{\alpha_i \in \Pi}$. Let the corresponding pinned root datum be denoted $(X, \Pi, X^{\vee}, \Pi^{\vee}, \{x_{\alpha_i}\})$. Let $\sigma$ be an automorphism of $G$ preserving its pinning. Then the fixed point subgroup $G^{\sigma} \subset G$ is a closed subgroup, and is reductive. The neutral component $G^{\sigma, \circ} \subset G^{\sigma} \subset G$ is also a closed subgroup and a connected, reductive group. The root datum of $G^{\sigma, \circ}$ is $(X_{\sigma}/tor, \res{\sigma}{\Pi}, (X^{\vee})^{\sigma}, N'_{\sigma}(\Pi^{\vee}), \{x_{\alpha_{\eta}}\})$, where
    \begin{enumerate}[label=\roman*.]
     \item
      \label{Prop:root-data:i}
      $X_{\sigma}$ is the group of $\sigma$-coinvariants $X / \langle x - \sigma(x) \rangle$. $X_{\sigma}/tor$ is the quotient by all torsion elements.
     
     \item
      \label{Prop:root-data:ii}
      For every orbit of simple roots $\eta \subset \Pi$, there is a single root $\alpha_{\eta}$ equal to the image of any $\alpha_i \in \eta$ under the quotient map $X \to X_{\sigma}/tor$. Then $\res{\sigma}{\Pi} = \{\alpha_{\eta} \mid \eta \in \Pi/\sigma \}$.
      
     \item
      \label{Prop:root-data:iii}
      $(X^{\vee})^{\sigma} \subset X^{\vee}$ is the subgroup of $\sigma$-invariant cocharacters.
      
     \item
      \label{Prop:root-data:iv}
      For every $\sigma$-orbit $\eta \subset \Pi^{\vee}$, we have $\alpha^{\vee}_{\eta} = \sum_{\alpha^{\vee}_i \in \eta} \alpha^{\vee}_i$ in the case $\eta$ consists of pairwise disconnected simple roots, and $\alpha^{\vee}_{\eta} = 2(\sum_{\alpha^{\vee}_i \in \eta} \alpha^{\vee}_i)$ in the case $\eta$ consists of a pair of simple roots connected by an edge. Then $N'_{\sigma}(\Pi^{\vee}) = \{\alpha^{\vee}_{\eta} \mid \eta \in \Pi^{\vee}/\sigma \}$.
    \end{enumerate}
   \end{Prop}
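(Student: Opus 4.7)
The closedness of $G^{\sigma}$ in $G$ is immediate from its description as the equalizer of $\sigma$ and the identity; reductivity of $G^{\sigma,\circ}$ is Steinberg's theorem \cite{St68}. I would dispatch these structural claims at the outset and then determine the four constituents of the root datum in turn.

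For the torus, the maximal torus of $G^{\sigma,\circ}$ is $T^{\sigma,\circ}$, which shares its cocharacter lattice with $T^{\sigma}$: a cocharacter $\GG_m \to T^{\sigma,\circ}$ is the same as a $\sigma$-invariant cocharacter of $T$, giving $X_*(T^{\sigma,\circ}) = (X^{\vee})^{\sigma}$ and hence \ref{Prop:root-data:iii}. Dualizing the inclusion $(X^{\vee})^{\sigma} \hookrightarrow X^{\vee}$ via the perfect pairing between $X$ and $X^{\vee}$ identifies $X^*(T^{\sigma,\circ})$ with $X_\sigma/tor$, giving \ref{Prop:root-data:i}. For \ref{Prop:root-data:ii}, since $\alpha_i$ and $\sigma(\alpha_i)$ restrict to the same character of $T^{\sigma,\circ}$, each $\sigma$-orbit $\eta$ contributes a single root $\alpha_\eta$ equal to the common image of the $\alpha_i \in \eta$ in $X_\sigma/tor$.

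The heart of the argument is \ref{Prop:root-data:iv}. For each $\sigma$-orbit $\eta \subset \Pi$, let $H_\eta \subset G$ be the subgroup generated by the root and opposite root subgroups for $\alpha_i \in \eta$; this is a $\sigma$-stable reductive subgroup, and $H_\eta^{\sigma,\circ}$ is a rank-one connected reductive subgroup of $G^{\sigma,\circ}$ whose unique positive coroot is the desired $\alpha_\eta^{\vee}$. Since $\sigma$ preserves the Dynkin diagram and acts transitively on $\eta$, two simple roots in $\eta$ must be either pairwise disconnected or joined by a single edge --- a heavier bond cannot be permuted by a diagram automorphism. In the disconnected case, $H_\eta$ is isogenous to $SL_2^{|\eta|}$ with $\sigma$ cyclically permuting factors; its fixed-point identity component is the diagonally embedded $SL_2$, whose coroot pushed into $X^{\vee}$ is $\sum_{\alpha_i^{\vee} \in \eta} \alpha_i^{\vee}$. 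In the edge case $\eta = \{\alpha_i, \alpha_j\}$, $H_\eta$ is isogenous to $SL_3$ with $\sigma$ acting by the pinning-preserving outer involution; an explicit diagonal-matrix computation shows that $(T_{H_\eta})^{\sigma,\circ}$ is the one-parameter subgroup $t \mapsto \mathrm{diag}(t,1,t^{-1})$, generated by $\alpha_i^{\vee} + \alpha_j^{\vee}$, while $H_\eta^{\sigma,\circ} \cong PGL_2$ (not $SL_2$), whose coroot is twice the generator of its cocharacter lattice. Hence $\alpha_\eta^{\vee} = 2(\alpha_i^{\vee} + \alpha_j^{\vee})$, and $N'_\sigma(\Pi^{\vee}) = \{\alpha_\eta^{\vee} : \eta \in \Pi/\sigma\}$.

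The main obstacle is precisely this factor of $2$ in the edge case: it is easy to miss unless one observes that $(SL_3)^{\sigma,\circ} \cong PGL_2$ rather than $SL_2$, for which the cleanest justification is the explicit matrix computation sketched above together with the fact that the map $SL_2 \to SL_3$ given by the principal embedding has kernel $\{\pm I\}$. Once the simple coroots are correctly identified, verifying that $(X_\sigma/tor, \res{\sigma}{\Pi}, (X^{\vee})^{\sigma}, N'_\sigma(\Pi^{\vee}))$ equipped with the induced pinning $\{x_{\alpha_\eta}\}$ is a based pinned root datum of $G^{\sigma,\circ}$ reduces to the rank-one check $\langle \alpha_\eta, \alpha_\eta^{\vee} \rangle = 2$, which holds in both cases by construction.
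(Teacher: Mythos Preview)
Your proof plan is correct and substantially more detailed than what the paper provides: the paper's own proof of this proposition is simply a pointer to the literature (``See \cite{St68} chapters 7--8. See also \cite{Ha15} and \cite{Ha18}.''), with no argument given. So there is nothing to compare at the level of strategy --- the paper treats this as a known fact and cites Steinberg and Haines rather than reproving it.

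That said, your sketch is essentially a condensed account of what one finds in those references, and the key points are handled correctly. In particular, the identification $X_*(T^{\sigma,\circ}) = (X^{\vee})^\sigma$ and its dual $X^*(T^{\sigma,\circ}) = X_\sigma/\mathrm{tor}$ are standard, and your treatment of the coroot computation via rank-one subgroups $H_\eta$ is the right idea. Your observation that the factor of $2$ in the $A_2$ case arises because $(SL_3)^{\sigma,\circ} \cong SO(3) \cong PGL_2$ rather than $SL_2$ is exactly the point, and the explicit one-parameter subgroup $t \mapsto \mathrm{diag}(t,1,t^{-1})$ generating $(X^{\vee})^\sigma$ makes this transparent. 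One small addition that would tighten the argument: you should note that the only possible orbit types are $A_1^r$ and $A_2$, since a diagram automorphism cannot permute two simple roots joined by a multiple bond (the bond is asymmetric), and an orbit of size $\ge 2$ in which some pair is connected forces the type-$A_2$ situation with $|\eta|=2$ (a larger connected orbit would require a cycle in the Dynkin diagram). The paper implicitly relies on this dichotomy elsewhere (e.g.\ in Lemma~\ref{Lemma:coweight}), so spelling it out here is worthwhile.
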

   
   \begin{proof}
    See \cite{St68} chapters 7--8. See also \cite{Ha15} and \cite{Ha18}.
   \end{proof}
   
   Note that if $\pi: X \to X_{\sigma}/tor$ and we have a character $\lambda \in X_{\sigma}/tor$ and cocharacter $\mu \in (X^{\vee})^{\sigma}$, then $\langle \lambda, \mu \rangle$ is given by $\langle \tilde{\lambda}, \mu \rangle$, where $\tilde{\lambda}$ is any lift $\pi(\tilde{\lambda}) = \lambda$.
   
   \begin{Prop}
    \label{Prop:Weyl}
    The group of $\sigma$-invariants $W^{\sigma}$ is the Coxeter group generated by $s_{\eta}$, where for each orbit $\eta$ of simple roots, $s_{\eta}$ is the longest element of the group generated by simple reflections in $\eta$.
    
    Furthermore, $W^{\sigma}$ is the Weyl group for $G^{\sigma, \circ}$, i.e. $W^{\sigma} = N_G(T^{\sigma, \circ})/T^{\sigma, \circ}$.
   \end{Prop}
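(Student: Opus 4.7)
The plan is to prove the two assertions in turn, leaning on Steinberg's work \cite{St68} for the structure of $G^{\sigma, \circ}$ but giving a self-contained Coxeter-theoretic argument for the first statement. The containment $\langle s_\eta \rangle \subseteq W^\sigma$ is immediate: for each $\sigma$-orbit $\eta \subset \Pi$, the subgroup $W_\eta := \langle s_\alpha : \alpha \in \eta \rangle$ is a finite Coxeter group of type $A_1 \times \cdots \times A_1$ or $A_2$ (since any two simple roots in an orbit are disconnected or connected by a single edge), and $\sigma$ permutes its generators. Hence $\sigma$ fixes the unique longest element $s_\eta$ of $W_\eta$.

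For the reverse containment, I would show that both $W^\sigma$ and the subgroup $W' := \langle s_\eta \mid \eta \in \Pi/\sigma \rangle$ act simply transitively on the set $\mathcal{C}^\sigma$ of $\sigma$-invariant Weyl chambers in $V := X_*(T) \otimes \RR$. Since $W$ already acts simply transitively on all Weyl chambers, freeness is automatic for any subgroup; and if $C \in \mathcal{C}^\sigma$ equals $wC_0$ for the fundamental chamber $C_0$, then $\sigma$-invariance of $C$ forces $\sigma(w) = w$, so $W^\sigma$ is transitive on $\mathcal{C}^\sigma$. For $W'$, I would argue by induction on gallery distance $d(C_0, C)$: given $C \ne C_0$ in $\mathcal{C}^\sigma$, pick a wall $H$ of $C_0$ separating $C_0$ from $C$, corresponding to some simple root $\alpha$. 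By $\sigma$-invariance of the pair $(C_0, C)$, every wall in the $\sigma$-orbit of $H$ also separates them, and the corresponding simple roots form the orbit $\eta$ containing $\alpha$. A direct check in the two local types ($A_1 \times \cdots \times A_1$ and $A_2$) shows that $s_\eta$ reflects $C_0$ across exactly these walls simultaneously, so $d(C_0, s_\eta C) < d(C_0, C)$ and induction concludes.

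For the second assertion, I would first cite Steinberg to know that $T^{\sigma, \circ}$ is a maximal torus of $G^{\sigma, \circ}$ and that $Z_G(T^{\sigma, \circ}) = T$ (since $T$ is the unique maximal torus of $G$ containing $T^{\sigma, \circ}$). Consequently $N_G(T^{\sigma, \circ}) \subseteq N_G(T)$, and an element $n \in N_G(T)$ normalizes $T^{\sigma, \circ}$ iff the conjugation action of $n$ on $T$ commutes with $\sigma$, iff the class $[n] \in W$ is $\sigma$-invariant. This gives a natural injection $N_{G^{\sigma,\circ}}(T^{\sigma,\circ})/T^{\sigma,\circ} \hookrightarrow W^\sigma$. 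Surjectivity follows from the first part: each generator $s_\eta$ is the longest element of the Weyl group of the (finite-type) semisimple subgroup $\langle U_{\pm\alpha} : \alpha \in \eta \rangle$, which is itself $\sigma$-stable, so $s_\eta$ admits a lift in the $\sigma$-fixed points of that subgroup, which lies in $G^{\sigma, \circ}$.

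The main obstacle is the inductive descent step for $W'$, specifically verifying that the product $s_\eta$ acts simultaneously as reflection across every wall in the $\sigma$-orbit of $H$ and no others. This reduces to a case analysis of the two possible local Dynkin types of an orbit; in the $A_2$ case, the relation $s_\alpha s_{\sigma(\alpha)} s_\alpha = s_{\sigma(\alpha)} s_\alpha s_{\sigma(\alpha)}$ and the fact that $s_\eta = s_\alpha s_{\sigma(\alpha)} s_\alpha$ is needed to see that $s_\eta$ genuinely exchanges the two half-spaces in the restricted root direction $\alpha_\eta \in N'_\sigma(\Pi)$. Once this is in place, everything else is bookkeeping.
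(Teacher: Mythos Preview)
Your proposal is essentially correct and goes well beyond what the paper does: the paper's proof consists entirely of the citation ``See \cite{St68} and \cite{Ha15}'' with no argument given. So there is no approach to compare against; you have supplied one where the author defers to the literature.

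A few remarks on the argument itself. The gallery-distance induction for transitivity of $W' = \langle s_\eta\rangle$ on $\mathcal{C}^\sigma$ is the standard Steinberg argument and works as you describe; the key inequality $\ell(s_\eta w) < \ell(w)$ follows because the positive roots sent negative by $s_\eta$ are exactly the positive roots supported on $\eta$, and once every simple $\alpha \in \eta$ satisfies $w^{-1}\alpha < 0$, so does every such root (trivially in type $A_1^k$; in type $A_2$ because $w^{-1}(\alpha_1+\alpha_2) = w^{-1}\alpha_1 + w^{-1}\alpha_2$ is a sum of negative roots). For the second assertion, your step ``$n \in N_G(T)$ normalizes $T^{\sigma,\circ}$ iff $[n] \in W^\sigma$'' tacitly uses that an element of $W$ fixing $X_*(T)^\sigma \otimes \RR$ pointwise is trivial; this holds because $\rho^\vee \in X_*(T)^\sigma$ is regular, so any such element fixes an interior chamber point. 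Finally, for lifting $s_\eta$ into $G^{\sigma,\circ}$, the cleanest route is to take the standard lift $\dot{s}_\eta$ built from the pinning (a product of the $\dot{s}_{\alpha_i} = x_{\alpha_i}(1)y_{\alpha_i}(-1)x_{\alpha_i}(1)$) and check directly that $\sigma$ fixes it; this avoids worrying about connectedness of $(H_\eta)^\sigma$. Note also that the proposition as stated in the paper writes $N_G(T^{\sigma,\circ})/T^{\sigma,\circ}$, which is too large; your reading $N_{G^{\sigma,\circ}}(T^{\sigma,\circ})/T^{\sigma,\circ}$ is the intended one.
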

   
   \begin{proof}
    See \cite{St68} and \cite{Ha15}.
   \end{proof}
   
   We define $\res{\sigma}{\Phi}$ and $N'_{\sigma}(\Phi^{\vee})$ for the root system and coroot system of $G^{\sigma, \circ}$ as follows:
   \[
    \res{\sigma}{\Phi} := W^{\sigma} \cdot \res{\sigma}{\Pi}, \qquad
    \text{and} \qquad
    N'_{\sigma}(\Phi^{\vee}) := W^{\sigma} \cdot N'_{\sigma}(\Pi^{\vee}).
   \]
   
   \begin{Prop}
    \label{Prop:rho}
    The half sum of positive coroots is equal for $G$ and $G^{\sigma, \circ}$: $\rho^{\vee} = \rho^{\vee}_{\sigma}$.
   \end{Prop}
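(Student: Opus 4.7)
The plan is to show $\rho^{\vee} = \rho^{\vee}_{\sigma}$ inside $X^{\vee} \otimes \QQ$ by establishing that $\rho^{\vee}$ satisfies the defining property of $\rho^{\vee}_{\sigma}$. First I would note that $\sigma$ preserves $B$ (as part of the pinning), hence permutes the positive coroots $\Phi^{\vee,+}$; so $\rho^{\vee} = \tfrac{1}{2}\sum_{\alpha^{\vee} \in \Phi^{\vee,+}}\alpha^{\vee}$ is $\sigma$-invariant and in particular lies in $(X^{\vee})^{\sigma} \otimes \QQ$.

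Next I would observe that $\rho^{\vee}$ lies in the coroot space of $G^{\sigma,\circ}$. By Proposition~\ref{Prop:root-data}\ref{Prop:root-data:iv}, each simple coroot $\alpha^{\vee}_{\eta}$ of $G^{\sigma,\circ}$ is a positive integer multiple of the orbit sum $\sum_{\alpha^{\vee}_i \in \eta}\alpha^{\vee}_i$, so the $\QQ$-span of $N'_{\sigma}(\Pi^{\vee})$ is exactly the $\sigma$-fixed subspace of the $\QQ$-span of $\Pi^{\vee}$. Since $\rho^{\vee}$ is both $\sigma$-invariant and a rational combination of positive coroots of $G$, it lies in this common space, which is the coroot space of $G^{\sigma,\circ}$.

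Then I would verify the characterizing identity $\langle \alpha_{\eta}, \rho^{\vee} \rangle = 1$ for every simple root $\alpha_{\eta}$ of $G^{\sigma,\circ}$. Using the pairing rule noted after Proposition~\ref{Prop:root-data}---that $\langle \alpha_{\eta}, \mu \rangle$ for $\mu \in (X^{\vee})^{\sigma}$ is computed by lifting $\alpha_{\eta}$ to any $\alpha_i \in \eta \subset \Pi$---I obtain $\langle \alpha_{\eta}, \rho^{\vee} \rangle = \langle \alpha_i, \rho^{\vee} \rangle = 1$. By definition, $\rho^{\vee}_{\sigma}$ satisfies the same identity, and since pairing with the simple roots $\{\alpha_{\eta}\}$ is non-degenerate on the coroot space of $G^{\sigma,\circ}$ (the Cartan matrix of $G^{\sigma,\circ}$ is invertible on the semisimple part), uniqueness of such an element forces $\rho^{\vee} = \rho^{\vee}_{\sigma}$.

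The only subtlety to watch is keeping the uniqueness argument inside the correct ambient space: one must restrict to the coroot space of $G^{\sigma,\circ}$ rather than all of $X^{\vee} \otimes \QQ$, since on the central-torus complement the pairing with simple roots vanishes identically. Beyond that, no case split between disconnected and connected $\sigma$-orbits is required---the factor of $2$ appearing for $A_2$-type orbits in Proposition~\ref{Prop:root-data}\ref{Prop:root-data:iv} is absorbed by the span statement and plays no role in the pairing characterization.
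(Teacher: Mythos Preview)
Your proof is correct and follows essentially the same approach as the paper: both identify $\rho^{\vee}$ with $\rho^{\vee}_{\sigma}$ via the characterization $\langle \alpha_{\eta}, - \rangle = 1$ for all simple roots $\alpha_{\eta}$ of $G^{\sigma,\circ}$, using the lifting rule for the pairing to reduce to $\langle \alpha_i, \rho^{\vee} \rangle = 1$. The paper phrases this as showing that the fundamental coweights satisfy $\lambda^{\eta} = \sum_{i \in \eta}\lambda^i$ and then summing, whereas you work directly with $\rho^{\vee}$; you are in fact slightly more careful than the paper in making explicit that the uniqueness step requires restricting to the coroot span rather than all of $X^{\vee}\otimes\QQ$.
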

   
   \begin{proof}
    Recall $\rho^{\vee}$ is the sum of fundamental coweights, or dual vectors to the simple roots under the natural perfect pairing $\langle{\cdot,\cdot}\rangle$. For $\alpha_i \in \Pi$, let $\lambda^i$ be the corresponding fundamental coweight. Similarly, for $\alpha_{\eta} \in \res{\sigma}{\Pi}$, let $\lambda^{\eta}$ be the corresponding fundamental coweight. It is sufficient to show that for each $\eta$,
    \[
     \lambda^{\eta} =
     \sum_{i \in \eta} \lambda^i.
    \]
    
    For any two orbits $\zeta$ and $\eta$,
    \[
     \langle \alpha_{\zeta}, \sum_{i \in \eta} \lambda^i \rangle =
     \sum_{i \in \eta} \langle \alpha_j, \lambda^i \rangle =
     \delta_{\zeta, \eta},
    \]
    where $\alpha_j$ is any of the simple roots mapping to $\alpha_{\zeta}$. This works regardless of type of orbits, since $\mathrm{res}_{\sigma}$ treats all simple roots uniformly.
   \end{proof}
   
   Note that $\rho_{\sigma} \ne \rho$. In particular, if $\lambda$ is a $\sigma$-invariant cocharacter, then in general $|\langle \rho_{\sigma}, \lambda \rangle| \le |\langle \rho, \lambda \rangle|$. As a result, by Theorem \ref{THM:MV} \ref{THM:MV:i}, $\dimp{A^{\sigma}} \le \dim{A}$ for a $\sigma$-invariant AMV cycle $A$.
   
   Now we can prove the following theorem, which appears in \cite{Ha18} as Theorem 7.7, although it is not proved in full generality there.
   
   \begin{THM}
    \label{H-7.7}
    Let $G$ be a quasi-split, connected reductive group over a non-Archimedean local field $F$ with inertia group $I$ and geometric Frobenius $\tau$. Let $\widehat{G}$ be the complex dual of $G$, and let $\widehat{G}^I$ be the fixed point subgroup of $\widehat{G}$, and $\widehat{G}^{I, \circ}$ the neutral component. Let the root system of $\widehat{G}^{I, \circ}$ be denoted $\breve{\Sigma}^{\vee}$. Then $\tau$ is an outer automorphism of $\widehat{G}^{I, \circ}$ preserving the natural pinning. Let $V_{\lambda, \xi}$ be the highest-weight representation of $\widehat{G}^{I, \circ} \rtimes \langle \tau \rangle$ where $\tau$ acts by the scalar $\xi \in \CC^{\times}$ on weight spaces associated to weights $\nu \in W^{\tau} \cdot \lambda$.
    
    Let $\lambda \in X^*(\widehat{T}^{I, \circ})^{+, \tau}$ be a dominant, $\tau$-invariant character of $\widehat{T}^{I, \circ}$. There is an equality
    
    \[
     \sum_{\nu \in Wt(\lambda)^{\tau}} \trres{\tau}{V_{\lambda, 1} (\nu)} e^{\nu} =
     \sum_{w \in W^{\tau}} w \left(\prod_{\alpha \in N'_{\tau} (\breve{\Sigma}^{\vee})^+} \frac{1}{1 - e^{-\alpha}} \right) e^{w(\lambda)}.
    \]
   \end{THM}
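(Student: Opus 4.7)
The plan is to apply Theorem \ref{THM:main} directly to the connected reductive complex group $\widehat{G}^{I, \circ}$ equipped with the pinning-preserving outer automorphism $\tau$, and then translate the root-theoretic data back into the notation of \ref{H-7.7}. First I verify the hypotheses of Theorem \ref{THM:main}: Steinberg's theorem guarantees that $\widehat{G}^{I, \circ}$ is a connected reductive group over $\CC$ and inherits a pinning from the $I$-stable pinning of $\widehat{G}$; because $\tau$ commutes with $I$ inside $\mathrm{Gal}(\overline F / F)$, it descends to a pinning-preserving automorphism of $\widehat{G}^{I, \circ}$ (cf. \cite{Ha15}, \cite{Ha18}). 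The character $\lambda \in X^*(\widehat{T}^{I, \circ})^{+, \tau}$ fills the role of the $\sigma$-invariant dominant $\mu$ required by the main theorem.

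Next I reconcile normalizations. Theorem \ref{THM:main} normalizes the automorphism on $V_\lambda$ so that $\tau$ fixes the highest-weight line $V_\lambda(\lambda)$ pointwise. Since $\tau$ commutes with the $\widehat{G}^{I, \circ}$-action and each weight line $V_\lambda(w\lambda)$ with $w \in W^\tau$ is obtained from $V_\lambda(\lambda)$ by a $\tau$-invariant lift $\dot w$ of $w$---such a lift exists because $W^\tau$ is the Weyl group of the pinned fixed-point subgroup (Proposition \ref{Prop:Weyl})---this forces $\tau$ to act as the identity on each $V_\lambda(w\lambda)$. That is precisely the condition $\xi = 1$ defining $V_{\lambda, 1}$ in \ref{H-7.7}. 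Applying the twining formula of Theorem \ref{THM:main} therefore yields
\[
 \sum_{\nu \in Wt(\lambda)^{\tau}} \trres{\tau}{V_{\lambda, 1}(\nu)} e^{\nu}
 = \sum_{w \in W^{\tau}} w\left(\prod_{\alpha \in \Psi^+} \frac{1}{1 - e^{-\alpha}}\right) e^{w(\lambda)},
\]
where $\Psi$ is the root system of $\widehat{(\widehat{\widehat{G}^{I, \circ}})^{\tau, \circ}}$.

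It remains to identify $\Psi$ with $N'_\tau(\breve{\Sigma}^\vee)$. I apply Proposition \ref{Prop:root-data} to the complex group $\widehat{\widehat{G}^{I, \circ}}$, whose coroot system is by definition the root system $\breve{\Sigma}^\vee$ of $\widehat{G}^{I, \circ}$. By part \ref{Prop:root-data:iv} of that proposition the coroot system of $(\widehat{\widehat{G}^{I, \circ}})^{\tau, \circ}$ is exactly $N'_\tau(\breve{\Sigma}^\vee)$, and passing to the dual recovers the same set as the root system of $\widehat{(\widehat{\widehat{G}^{I, \circ}})^{\tau, \circ}}$. Hence $\Psi = N'_\tau(\breve{\Sigma}^\vee)$, giving the desired formula.

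The main obstacle is bookkeeping: keeping the tower $\widehat{G} \rightsquigarrow \widehat{G}^{I, \circ} \rightsquigarrow \widehat{\widehat{G}^{I, \circ}} \rightsquigarrow (\widehat{\widehat{G}^{I, \circ}})^{\tau, \circ} \rightsquigarrow \widehat{(\widehat{\widehat{G}^{I, \circ}})^{\tau, \circ}}$ straight, and verifying that the scalar normalization $\xi = 1$ used to define $V_{\lambda, 1}$ is equivalent to the highest-weight-line normalization used in Theorem \ref{THM:main}. The first reduces to Proposition \ref{Prop:root-data} and the second to a Schur-style argument; neither step is deep, but together they are the genuine content of the reduction from \ref{H-7.7} to the main theorem.
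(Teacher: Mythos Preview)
Your proposal is correct and follows essentially the same route as the paper: apply Theorem \ref{THM:main} to the connected reductive group $\widehat{G}^{I,\circ}$ with the pinning-preserving automorphism $\tau$, check that the normalization $\xi=1$ in $V_{\lambda,1}$ agrees with the highest-weight-line normalization of Theorem \ref{THM:main}, and then identify the root system appearing on the right-hand side with $N'_\tau(\breve\Sigma^\vee)$ via Proposition \ref{Prop:root-data}\ref{Prop:root-data:iv} and the duality tower. One small wording issue: $\tau$ need not \emph{commute} with $I$ in $\mathrm{Gal}(\overline F/F)$, only normalize it (since $I$ is normal), but that is all that is needed for $\tau$ to act on $\widehat{G}^{I,\circ}$ preserving the pinning, so your conclusion stands.
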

   
   \begin{proof}
    The group $\widehat{G}^{I, \circ}$ is connected and reductive. And since $G$ is quasi-split over $F$, $\tau$ acts on $\widehat{G}^{I, \circ}$, preserving the natural pinning. Then the theorem follows from Theorem \ref{THM:main} by two observations. First, $V_{\lambda, 1} = V_{\lambda}$ as a vector space and carries the same normalized $\tau$-action as that described in Section \ref{S:Eigenvalues}.
    
    And second, $N'_{\tau}(\breve{\Sigma}^{\vee})$ is the set of roots of $\widehat{\widehat{\widehat{G}^{I, \circ}}^{\tau, \circ}}$. Indeed, $\breve{\Sigma}^{\vee}$ is the set of roots of $\widehat{G}^{I, \circ}$, so it is the set of coroots of the complex dual group $\widehat{\widehat{G}^{I, \circ}}$. Then by Proposition \ref{Prop:root-data} \ref{Prop:root-data:iv}, $N'_{\tau}(\breve{\Sigma}^{\vee})$ is the set of coroots of the neutral component of the $\tau$-fixed subgroup $\widehat{\widehat{G}^{I, \circ}}^{\tau, \circ} \subset \widehat{\widehat{G}^{I, \circ}}$. And finally, by again taking complex dual, $N'_{\tau}(\breve{\Sigma}^{\vee})$ is the set of roots of $\widehat{\widehat{\widehat{G}^{I, \circ}}^{\tau, \circ}}$.
   \end{proof}
   
   Note that in Theorem \ref{H-7.7}, $G$ is assumed to be quasi-split over a non-Archimedean local field. In particular, $G$ may be ramified. Then the root system $\breve \Sigma^{\vee}$ for $\widehat{G}^{I, \circ}$ may be determined combinatorially from the absolute root datum of $G$, along with the action of the Galois group. In particular, as shown in \cite{Ha18} Theorem 6.8, $N'_\tau(\breve \Sigma^\vee)$ is equal to the root system $\widetilde\Sigma_0^\vee$ appearing in the Lusztig character formula of \cite{Kn05}. This is a necessary ingredient in the proof of Theorem D in \cite{Ha18}.

 \appendix
 
 \section{Geometric Satake equivalence}
  \label{app}
  I use several theorems of \cite{MV07}, summarized in Theorem \ref{THM:MV}. In wording more similar to that used by Mirkovi\'c and Vilonen, along with numbers of specific statements, I have the following:
  
  \begin{THM}[\cite{MV07} Theorem 3.2]
   \label{THM:MV:3.2}
   \begin{enumerate}[label=\alph*)]
    \item 
     The intersection $S_e^{\nu} \cap \GrG{G}^{\mu}$ is nonempty precisely when $\varpi^{\nu} \in \overline{\GrG{G}^{\mu}}$ and then $S_e^{\nu} \cap \overline{\GrG{G}^{\mu}}$ is of pure dimension $\langle{\rho,\mu + \nu}\rangle$, if $\mu$ is chosen dominant.
     
    \item
     The intersection $S_{w_0}^{\nu} \cap \GrG{G}^{\mu}$ is nonempty precisely when $\varpi^{\nu} \in \overline{\GrG{G}^{\mu}}$ and then $S_{w_0}^{\nu} \cap \overline{\GrG{G}^{\mu}}$ is of pure dimension $-\langle{\rho, \mu + \nu}\rangle$, if $\mu$ is chosen anti-dominant.
   \end{enumerate}
  \end{THM}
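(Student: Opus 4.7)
The statement is Theorem 3.2 of \cite{MV07}, cited verbatim. Parts (a) and (b) are symmetric (exchange $e$ with $w_0$, dominant with anti-dominant) and transform into each other by applying $w_0$, which conjugates $LU$ to $w_0 L U w_0^{-1}$ and inverts the dominance order. I will sketch an approach to (a).

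For the nonemptyness assertion, the nontrivial direction is: if $S_e^\nu \cap \GrG{G}^\mu$ contains some point $x$, then $\varpi^\nu \in \overline{\GrG{G}^\mu}$. Using the geometric description (\ref{eq:semi-infinite}), the limit $\lim_{s \to 0} \rho^\vee(s) \cdot x = \varpi^\nu x_0$ lies in the closed $T$-invariant subvariety $\overline{\GrG{G}^\mu}$, giving the conclusion. For the converse, if $\varpi^\nu x_0 \in \overline{\GrG{G}^\mu}$, one must produce a point of $S_e^\nu \cap \GrG{G}^\mu$; when $\nu \in W \cdot \mu$ this is immediate with $x = \varpi^\nu x_0$, and in general one constructs such a point by sliding $\varpi^\nu x_0$ along a root-direction curve into $\GrG{G}^\mu$ via a suitable root subgroup of $LU$.

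For the dimension formula, my plan is hyperbolic localization with respect to the cocharacter $2\rho^\vee: \GG_m \to T$. This cocharacter contracts $S_e^\nu$ to the unique $T$-fixed point $\varpi^\nu x_0$, so $S_e^\nu \cap \overline{\GrG{G}^\mu}$ is the attracting locus of $\varpi^\nu x_0$ inside $\overline{\GrG{G}^\mu}$. A Bialynicki-Birula-type argument adapted to ind-projective varieties bounds the dimension of this attracting locus at each $T$-fixed point by the dimension of the positive-weight part of the tangent space, and the tangent space to $\GrG{G}$ at $\varpi^\nu x_0$ decomposes into $T$-weight spaces indexed by affine roots, with an explicit count yielding the expected value $\langle \rho, \mu + \nu \rangle$.

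The main obstacle is equidimensionality together with the matching lower bound, since hyperbolic localization a priori only controls the top-dimensional piece and admits smaller components. The classical MV argument exhibits an explicit surjection from an affine variety of the predicted dimension, built from ordered products of affine root subgroups acting on $\varpi^\nu x_0$, onto a dense open subset of each irreducible component of $S_e^\nu \cap \overline{\GrG{G}^\mu}$. This uses the $LU$-orbit analysis provided by the decomposition $LU = L^{--} U \cdot L^+ U$ invoked in Proposition \ref{Prop:Orbits:v}, compared with the Iwahori decomposition of $LG$. Equidimensionality then follows by verifying that every irreducible component of the intersection meets the open Schubert stratum $\GrG{G}^\mu$ itself, after which an induction on the partial order $\le$ along the Cartan stratification propagates the dimension count to smaller strata in $\overline{\GrG{G}^\mu}$.
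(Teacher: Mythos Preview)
The paper does not prove this statement. It is quoted verbatim from \cite{MV07} as their Theorem 3.2 and used as a black box in Appendix \ref{app} to deduce Theorem \ref{THM:MV}; no argument for it appears anywhere in the paper. There is therefore nothing here to compare your proposal against.

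As an aside on the content of your sketch: the nonemptyness direction via the limit description (\ref{eq:semi-infinite}) is correct and is exactly how the paper itself reasons about $T$-fixed points in closures elsewhere. Your plan for the dimension formula via hyperbolic localization and Bialynicki--Birula is a reasonable modern packaging, but it is not quite how \cite{MV07} actually proceeds; their argument is a direct analysis of $LU$-orbits combined with an induction along the Cartan stratification, without invoking Braden-style localization. The step you flag as the ``main obstacle'' (equidimensionality and the lower bound) is indeed where the real work lies, and your one-sentence description of it is too compressed to count as a proof; in particular, the claim that every irreducible component of the intersection meets the open stratum $\GrG{G}^{\mu}$ is the substantive point and requires its own argument.
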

  
  \begin{THM}[\cite{MV07} Theroem 3.5]
   \label{THM:MV:3.5}
   For all $\Aa \in P_{L^+G} (\GrG{G}, K)$ there is a canonical isomorphism
   \[
    H_c^k (S_e^{\nu}, \Aa) \overset{\sim}{\to}
    H_{S_{w_0}^{\nu}}^k (\GrG{G}, \Aa)
   \]
   and both sides vanish for $k \ne 2\langle{\rho, \nu}\rangle$.
   
   In particular, the functors $F_{\nu}: P_{L^+G}(\GrG{G}, K) \to \ModA{K}$, defined by
   \[
    F_{\nu} :=
    H_c^{2\langle{\rho, \nu}\rangle} (S_e^{\nu}, -) =
    H_{S_{w_0}^{\nu}}^{2\langle{\rho, \nu}\rangle} (\GrG{G}, -),
   \]
   are exact.
  \end{THM}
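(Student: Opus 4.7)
The plan is to realize $S_e^{\nu}$ and $S_{w_0}^{\nu}$ as attracting and repelling loci for a $\GG_m$-action on $\GrG{G}$ around the fixed point $\varpi^{\nu} x_0$, and then to combine hyperbolic localization with a perversity-and-dimension argument. Concretely, I would use the $\GG_m$-action through $\rho^{\vee}\colon \GG_m \to T$. Its fixed points are exactly the $\varpi^{\eta}x_0$ (by Proposition \ref{Prop:moment} \ref{Prop:moment-i}), and the geometric description (\ref{eq:semi-infinite}) identifies $S_e^{\nu}$ with the attractor at $\varpi^{\nu}x_0$ (the limit as $s \to 0$) and $S_{w_0}^{\nu}$ with the repeller (the limit as $s \to \infty$, using $w_0(\rho^{\vee}) = -\rho^{\vee}$). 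This places the statement squarely in the realm of hyperbolic localization around an isolated fixed point.

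For the canonical map I would invoke the hyperbolic-localization formalism: for a $T$-equivariant (hence $\GG_m$-equivariant) constructible complex $\Aa$ on $\GrG{G}$, the $!$-restriction to the attractor and the $*$-restriction to the repeller, each pushed to the fixed locus, are related by a natural transformation whose restriction to the connected component $\{\varpi^{\nu}x_0\}$ takes the form $H_c^{\bullet}(S_e^{\nu}, \Aa) \to H^{\bullet}_{S_{w_0}^{\nu}}(\GrG{G}, \Aa)$. To upgrade this transformation to an isomorphism for perverse $\Aa$, I would appeal to the $t$-exactness of hyperbolic localization on $L^+G$-equivariant perverse sheaves, whose output at each fixed point is a graded vector space concentrated in a single degree, and then identify both sides with the common shifted stalk at $\varpi^{\nu}x_0$.

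For the vanishing outside $k = 2\langle \rho, \nu \rangle$, I would first reduce by semisimplicity of $P_{L^+G}(\GrG{G})$ to $\Aa = IC_{\mu}$. Stratifying $S_e^{\nu}$ by the intersections $S_e^{\nu} \cap \GrG{G}^{\lambda}$ for $\lambda \le \mu$ (of pure dimension $\langle \rho, \lambda + \nu \rangle$ by Theorem \ref{THM:MV:3.2}), the perverse bound on $IC_{\mu}$ forces its cohomology sheaves on $\GrG{G}^{\lambda}$ to live in degrees $\le -2\langle \rho, \lambda \rangle$, with strict inequality for $\lambda < \mu$. A compact-support spectral sequence then yields $H_c^k(S_e^{\nu}, IC_{\mu}) = 0$ for $k > 2\langle \rho, \nu \rangle$. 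A dual argument on the repeller side, using the anti-dominant form of Theorem \ref{THM:MV:3.2} together with the costalk bounds on $IC_{\mu}$, pins down vanishing for $k < 2\langle \rho, \nu \rangle$, giving concentration in the single degree $k = 2\langle \rho, \nu \rangle$. The main obstacle is that this pincer needs both the dominant and anti-dominant forms of Theorem \ref{THM:MV:3.2} working in tandem; additionally, because $\GrG{G}$ is only an ind-scheme and both semi-infinite cells are infinite-dimensional, some care is required to verify that the hyperbolic-localization formalism and perverse $t$-structure behave correctly, which is typically handled by reducing to the finite-type support $\overline{\GrG{G}^{\mu}}$ of $\Aa = IC_{\mu}$.
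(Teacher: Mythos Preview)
The paper does not prove this statement at all: Theorem \ref{THM:MV:3.5} is quoted in the appendix as \cite{MV07} Theorem 3.5 and used as a black box, with no argument given. So there is no ``paper's own proof'' to compare your proposal against.

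For what it is worth, your sketch is a faithful outline of the standard proof in the literature: identify $S_e^{\nu}$ and $S_{w_0}^{\nu}$ as the attractor and repeller for the $\GG_m$-action via $\rho^{\vee}$, apply Braden's hyperbolic localization to obtain the canonical isomorphism, and then use the dimension estimates of Theorem \ref{THM:MV:3.2} together with the perverse (co)support conditions on $IC_{\mu}$ to squeeze the cohomology into the single degree $2\langle \rho, \nu \rangle$. Your caveats about working on the finite-type support $\overline{\GrG{G}^{\mu}}$ to make the ind-scheme issues harmless are also the right ones. But none of this appears in the present paper; the author simply imports the result.
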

  
  \begin{THM}[\cite{MV07} Theorem 3.6]
   \label{THM:MV:3.6}
   \[
    \HH^{\bullet} \cong
    \bigoplus_{\nu \in X_*(T)} F_{\nu} =
    \bigoplus_{\nu \in X_*(T)} H_c^{2 \langle{\rho, \nu}\rangle} (S_e^{\nu}, -) :
    P_{L^+G} (\GrG{G}, K) \to \Vect{K}
   \]
  \end{THM}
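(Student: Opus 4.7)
The plan is to deduce this decomposition from Theorem \ref{THM:MV:3.5} applied to the Iwasawa stratification $\GrG{G} = \coprod_{\nu \in X_*(T)} S_e^{\nu}$. Since any $\Aa \in P_{L^+G}(\GrG{G}, K)$ is supported on a finite-type Schubert variety $\overline{\GrG{G}^{\mu}}$, only finitely many semi-infinite cells meet its support. After choosing a linear refinement of the partial order $\le$ on these cocharacters, I would build an ascending filtration $Z_0 \subset Z_1 \subset \cdots \subset Z_N = \overline{\GrG{G}^{\mu}}$ by closed subvarieties, such that each complement $Z_k \setminus Z_{k-1}$, open in $Z_k$, is the cell $S_e^{\nu_k} \cap \overline{\GrG{G}^{\mu}}$. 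Compactly supported cohomology then gives, for each $k$, a long exact sequence linking $H_c^\bullet(Z_{k-1}, \Aa)$, $H_c^\bullet(Z_k, \Aa)$, and $H_c^\bullet(S_e^{\nu_k} \cap \overline{\GrG{G}^{\mu}}, \Aa)$; by Theorem \ref{THM:MV:3.5} the last group equals $F_{\nu_k}(\Aa)$, concentrated in the single degree $2\langle \rho, \nu_k\rangle$.

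The second step is to show that each such long exact sequence splits canonically. The support-cohomology identification in Theorem \ref{THM:MV:3.5}, $F_\nu(\Aa) \cong H^{2\langle \rho, \nu\rangle}_{S_{w_0}^{\nu}}(\GrG{G}, \Aa)$, provides a natural forgetful map $F_\nu(\Aa) \to \HH^\bullet(\GrG{G}, \Aa)$ that does not depend on the chosen filtration. I would argue that this canonical map splits the corresponding connecting homomorphism in the long exact sequence, factoring the contribution of $F_\nu$ through a piece that lives naturally in $\HH^\bullet(\GrG{G}, \Aa)$ independently of the ordering. Combined with exactness of each $F_\nu$, also a consequence of Theorem \ref{THM:MV:3.5}, this forces the connecting maps to vanish, so the filtration splits inductively to give $H_c^\bullet(Z_k, \Aa) \cong \bigoplus_{i \le k} F_{\nu_i}(\Aa)$ naturally in $\Aa$.

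Passing to $k = N$ yields $\HH^\bullet(\Aa) \cong \bigoplus_\nu F_\nu(\Aa)$, and since the splitting maps are canonical and functorial, this upgrades to an isomorphism of functors $P_{L^+G}(\GrG{G}, K) \to \Vect{K}$. The main obstacle is controlling the connecting maps in a way that is independent of the chosen ordering of cocharacters. Although the concentration of each $F_\nu$ in the single degree $2\langle\rho,\nu\rangle$ suggests a degree-counting argument, the integers $2\langle\rho,\nu\rangle$ are not distinct across different $\nu$ in general, so a naive parity argument does not suffice; the decisive input must be the canonical support-cohomology splitting provided by Theorem \ref{THM:MV:3.5}, which produces a well-defined natural inverse $\bigoplus_\nu F_\nu \to \HH^\bullet$ and thereby renders the choice of filtration irrelevant to the final decomposition.
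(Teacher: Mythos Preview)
The paper does not give its own proof of this statement: Theorem~\ref{THM:MV:3.6} is listed in Appendix~\ref{app} purely as a citation of \cite{MV07}, Theorem~3.6, with no argument supplied. So there is no in-paper proof to compare your proposal against.

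That said, your outline is the standard route to this result and is essentially what Mirkovi\'c--Vilonen do: filter by the Iwasawa stratification, use the long exact sequence in compactly supported cohomology, and invoke the concentration/vanishing of Theorem~\ref{THM:MV:3.5} to collapse the spectral sequence. You are right that the degrees $2\langle\rho,\nu\rangle$ can coincide for distinct $\nu$, so a pure degree argument does not split the filtration; the canonical identification $F_{\nu}(\Aa)\cong H^{2\langle\rho,\nu\rangle}_{S_{w_0}^{\nu}}(\GrG{G},\Aa)$ from Theorem~\ref{THM:MV:3.5}, together with the forget-supports map to $\HH^{\bullet}(\GrG{G},\Aa)$, is exactly the natural section that makes the splitting canonical and independent of the chosen linear refinement. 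One small point worth making explicit in a full write-up: the two filtrations---one by closed unions of $S_e^{\nu}$ (giving the associated graded $H_c^{\bullet}(S_e^{\nu},\Aa)$) and the other by closed unions of $S_{w_0}^{\nu}$ (giving local cohomology $H^{\bullet}_{S_{w_0}^{\nu}}$)---are opposite with respect to the partial order, and it is precisely this opposition that forces their common refinement to split. Your sketch gestures at this but does not quite say it.
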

  
  \begin{Prop}[\cite{MV07} Proposition 3.10]
   \label{Prop:MV:3.10}
   Let $R$ be a Noetherian ring of finite global dimension. There is a canonical identification
   \[
    H_c^{2\langle{\rho, \lambda}\rangle} (S_e^{\lambda}, IC_{\mu}(R)) \cong
    H_c^{2\langle{\rho, \mu - \lambda}\rangle} (S_e^{\lambda} \cap \GrG{G}^{\mu}, R) \cong
    R[\Irr{S_e^{\lambda} \cap \overline{\GrG{G}^{\mu}}}],
   \]
   here $R[\Irr{S_e^{\lambda} \cap \overline{\GrG{G}^{\mu}}}]$ stands for the free $R$-module generated by the irreducible components of $S_e^{\lambda} \cap \overline{\GrG{G}^{\mu}}$.
  \end{Prop}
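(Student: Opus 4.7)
My plan is to deduce the proposition from the stratification of $\overline{\GrG{G}^{\mu}}$ into Cartan strata, the dimension estimates in Theorem \ref{THM:MV:3.2}, and the defining cohomological support axioms for the middle-perversity IC sheaf.

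I would first handle the right-hand identification $H_c^{2\langle\rho,\mu-\lambda\rangle}(S_e^{\lambda} \cap \GrG{G}^{\mu}, R) \cong R[\Irr{S_e^{\lambda} \cap \overline{\GrG{G}^{\mu}}}]$. By Theorem \ref{THM:MV:3.2} the intersection $S_e^{\lambda} \cap \overline{\GrG{G}^{\mu}}$ is equidimensional, and for each dominant $\mu' < \mu$ the smaller stratum $S_e^{\lambda} \cap \GrG{G}^{\mu'}$ has strictly smaller dimension, because $\langle\rho, \mu - \mu'\rangle > 0$ whenever $\mu - \mu'$ is a nonzero sum of simple coroots. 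Thus the open stratum $S_e^{\lambda} \cap \GrG{G}^{\mu}$ is smooth, dense, and top-dimensional in $S_e^{\lambda} \cap \overline{\GrG{G}^{\mu}}$; its top-degree compactly supported cohomology with constant coefficients is freely generated by its connected components, which correspond bijectively to the irreducible components of the closure.

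For the left-hand identification I stratify $S_e^{\lambda} \cap \overline{\GrG{G}^{\mu}}$ by its intersections with the Cartan strata $\GrG{G}^{\mu'}$ for dominant $\mu' \le \mu$. Iterated excision along this stratification (equivalently, the hypercohomology spectral sequence associated to the stratification) reduces the computation of $H_c^{\bullet}(S_e^{\lambda}, IC_{\mu}(R))$ to contributions from the graded pieces $H_c^{\bullet}(S_e^{\lambda} \cap \GrG{G}^{\mu'}, IC_{\mu}|_{S_e^{\lambda} \cap \GrG{G}^{\mu'}})$. On the top stratum $\mu' = \mu$ the IC sheaf restricts to the constant sheaf $R$ shifted by $\dim \GrG{G}^{\mu}$, and its contribution in the target total degree is identified, via that shift, with $H_c^{2\langle\rho,\mu-\lambda\rangle}(S_e^{\lambda} \cap \GrG{G}^{\mu}, R)$. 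On each lower stratum $\mu' < \mu$, the strict support axiom for the middle-perversity IC extension forces the cohomology sheaves of $IC_{\mu}$ to live in cohomological degrees strictly below $-\dim \GrG{G}^{\mu'}$; combined with the dimension bound on $S_e^{\lambda} \cap \GrG{G}^{\mu'}$ from Theorem \ref{THM:MV:3.2}, this forces the $H_c$-contribution at the target total degree to vanish.

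The main obstacle is precisely this boundary vanishing: one must carefully balance the strict IC support inequality against the dimension estimate for each stratum intersection, and check that at the specific target total degree the strict inequality leaves no room for a nonzero contribution. Once this vanishing is in hand, the excision spectral sequence collapses in the relevant degree and the desired isomorphism appears as an edge map. An alternative route, closer to the original argument in \cite{MV07}, is to invoke Theorem \ref{THM:MV:3.5} to identify $H_c^{\bullet}(S_e^{\lambda}, IC_{\mu})$ with local cohomology along the opposite semi-infinite orbit $S_{w_0}^{\lambda}$, where Verdier duality and the self-duality (up to shift) of $IC_{\mu}$ reduce the computation to the dual top-dimensional open piece of the intersection, avoiding the spectral sequence bookkeeping.
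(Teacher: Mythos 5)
First, a caveat on the comparison: the paper does not actually prove this proposition. It is imported wholesale from \cite{MV07} (their Proposition 3.10), and the only commentary added is the remark that the second isomorphism follows from the bijection between $\Irr{S_e^{\lambda}\cap\GrG{G}^{\mu}}$ and $\Irr{S_e^{\lambda}\cap\overline{\GrG{G}^{\mu}}}$. Your sketch is, in strategy, a reconstruction of Mirkovi\'c--Vilonen's own argument: excision along the Cartan stratification of $\overline{\GrG{G}^{\mu}}$, the dimension estimates of Theorem \ref{THM:MV:3.2}, and the behaviour of $IC_{\mu}$ on strata. So the route is the right one, but the step you yourself single out as the main obstacle does not close with the inputs you allow yourself.

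Here is the gap. Write $X=S_e^{\lambda}\cap\overline{\GrG{G}^{\mu}}$, $U=S_e^{\lambda}\cap\GrG{G}^{\mu}$, $Z=X\setminus U$, and let $k$ be the target degree. The relevant excision sequence is
\[
H_c^{k-1}(Z,IC_{\mu})\to H_c^{k}(U,IC_{\mu})\to H_c^{k}(X,IC_{\mu})\to H_c^{k}(Z,IC_{\mu}).
\]
On a boundary stratum $\GrG{G}^{\mu'}$, $\mu'<\mu$, the strict support axiom gives $\mathcal{H}^{q}(IC_{\mu})|_{\GrG{G}^{\mu'}}=0$ only for $q\ge -2\langle\rho,\mu'\rangle$, i.e. the stalks may live in degree $-2\langle\rho,\mu'\rangle-1$; combined with $\dimp{S_e^{\lambda}\cap\GrG{G}^{\mu'}}\le\langle\rho,\mu'+\lambda\rangle$ this kills $H_c^{j}(Z,IC_{\mu})$ only for $j\ge k$. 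That disposes of the right-hand term and yields \emph{surjectivity} of $H_c^{k}(U)\to H_c^{k}(X)$, but leaves $H_c^{k-1}(Z,IC_{\mu})$ untouched, so injectivity is not established. Mirkovi\'c--Vilonen close this with a genuinely extra geometric input: the stalks of $IC_{\mu}$ along $\GrG{G}^{\mu'}$ are concentrated in \emph{even} degrees (their stalk/parity computation, valid over $\ZZ$ and hence over any $R$ as in the hypothesis), hence in degrees $\le-2\langle\rho,\mu'\rangle-2$, which pushes the boundary contribution down to degrees $\le k-2$ and kills the left-hand term as well. This parity vanishing is not a formal consequence of the perverse support axioms, so your proof needs it (or an equivalent substitute) as an explicit ingredient; note that Theorem \ref{THM:MV:3.5} does not apply directly to the boundary term, since $IC_{\mu}$ restricted to $Z$ is no longer an object of $P_{L^+G}(\GrG{G})$. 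Two smaller points: the top-degree compactly supported cohomology of $U$ with constant coefficients is freely generated by its top-dimensional \emph{irreducible} components (true without any smoothness claim), not its connected components; and the degree $2\langle\rho,\mu-\lambda\rangle$ in the middle term matches the $S_{w_0}$ dimension convention of Theorem \ref{THM:MV:3.2}(b), whereas for $S_e^{\lambda}$ the relevant dimension is $\langle\rho,\mu+\lambda\rangle$ --- a convention mismatch already present in the transcribed statement, not one you introduced.
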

  
  Note that \cite{MV07} Theorem 3.10 is proved using a constant sheaf on the smooth variety $S_e^{\lambda} \cap \GrG{G}^{\mu}$, as written above. The second isomorphism thus follows from the bijection between $\Irr{S_e^{\lambda} \cap \GrG{G}^{\mu}}$ and $\Irr{S_e^{\lambda} \cap \overline{\GrG{G}^{\mu}}}$.
  
  \begin{THM}[\cite{MV07} Theorem 12.1]
   \label{THM:MV:main}
   The group scheme $\widetilde{G}_{\ZZ}$ is the split reductive group scheme over $\ZZ$ whose root datum is dual to that of $G$.
  \end{THM}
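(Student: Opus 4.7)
The plan is to apply Tannakian reconstruction to $\mathcal{P} := P_{L^+G}(\GrG{G}, \ZZ)$: equip $\mathcal{P}$ with a symmetric monoidal structure, show that the total cohomology $\HH^{\bullet}$ is an exact, faithful tensor functor to $\ModA{\ZZ}$, and then identify $\widetilde{G}_{\ZZ} := \Autop{\otimes}{\HH^{\bullet}}$ with the split reductive group whose root datum is dual to that of $G$.

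The first step is to define a convolution product on $\mathcal{P}$. For $\Aa, \Bb \in \mathcal{P}$, set $\Aa \conv \Bb := m_*(\Aa \widetilde{\boxtimes} \Bb)$, where $\Aa \widetilde{\boxtimes} \Bb$ is the twisted external product on $LG \times^{L^+G} \GrG{G}$ and $m$ is the convolution morphism to $\GrG{G}$. A stratified semismallness argument, using the dimension estimates of Theorem \ref{THM:MV:3.2}, shows that convolution preserves perversity, giving $\mathcal{P}$ a monoidal structure. To upgrade this to a symmetric monoidal structure, I would realize convolution as the specialization along the diagonal of a global product on the Beilinson--Drinfeld Grassmannian $\GrG{G, X^2}$ over a smooth curve $X$; on the complement of the diagonal one gets the external product, which is manifestly symmetric, and nearby cycles transport the symmetry to the convolution. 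Proper base change and the K\"unneth formula then promote $\HH^{\bullet}$ to a tensor functor, while exactness and the grading $\HH^{\bullet} = \bigoplus_{\nu \in X_*(T)} F_{\nu}$ are provided by Theorems \ref{THM:MV:3.5} and \ref{THM:MV:3.6}; faithfulness is immediate from Proposition \ref{Prop:MV:3.10}, since $F_{\nu}(IC_{\mu}) \ne 0$ whenever $\nu \in Wt(\mu)$.

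Having reconstructed $\widetilde{G}_{\ZZ}$, identifying its root datum proceeds as foreshadowed in Section \ref{S:Eigenvalues}. The grading $\HH^{\bullet} = \bigoplus F_{\nu}$ singles out a maximal torus $\widetilde{T}_{\ZZ} \subset \widetilde{G}_{\ZZ}$ with $X_*(\widetilde{T}_{\ZZ}) = X^*(T)$, and a choice of Borel $\widetilde{B}_{\ZZ}$ is pinned down by the positive cone on weight spaces. Root subgroups $\widetilde{U}_{\alpha^{\vee}}$ are characterized by the shift condition $g(F_{\nu}) \subset F_{\nu + \alpha^{\vee}}$, and Proposition \ref{Prop:MV:3.10} identifies the ranks of weight spaces with counts of irreducible components of $\overline{S_e^{\lambda} \cap \GrG{G}^{\mu}}$; comparing with the Kostant partition formula forces the simple coroots of $\widetilde{G}_{\ZZ}$ to be exactly the simple roots $\Pi$ of $G$. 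Reductivity, and the verification that the Cartan integers $\langle \alpha^{\vee}, \beta \rangle$ match those of the dual root datum, is carried out by reduction to rank-one and rank-two Levi subgroups, where the relevant Schubert varieties are projective lines or minuscule orbits, and a direct calculation with $IC_{\mu}$ for $\mu$ a fundamental coweight completes the identification.

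The main obstacle is the construction of the commutativity constraint. Convolution is not symmetric on the nose and there is no naive braiding; the Beilinson--Drinfeld construction is essential, and it requires nontrivial verifications that nearby cycles preserve perversity, that the resulting symmetry satisfies the hexagon axiom, and that convolution and external product agree over the diagonal. A further subtlety is the sign modification needed so that reconstruction yields an ordinary reductive group rather than a super-group: the commutativity constraint must be twisted by $(-1)^{\langle 2\rho, \nu \rangle}$ on the $F_{\nu}$-component, which is well-defined precisely because the parity of $\langle 2\rho, \nu \rangle$ is controlled by the dimension bounds in Theorem \ref{THM:MV:3.2}.
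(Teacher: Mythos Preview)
The paper does not prove this statement at all: it is quoted verbatim as \cite{MV07} Theorem 12.1 and used as a black box. In the appendix, the only ``proof'' offered for the corresponding summary statement (Theorem \ref{app:THM:MV} \ref{app:THM:MV:v}) is the single line ``Statement \ref{app:THM:MV:v} is a restatement of Theorem \ref{THM:MV:main}.'' So there is no argument in the paper to compare your proposal against.

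That said, your sketch is a faithful outline of the actual proof in \cite{MV07}: convolution via the twisted product and semismallness, the commutativity constraint via fusion on the Beilinson--Drinfeld Grassmannian (including the sign fix by $(-1)^{\langle 2\rho,\nu\rangle}$ to avoid a super-group), the fiber functor $\HH^{\bullet}=\bigoplus F_{\nu}$, and the identification of the root datum through the weight functors and rank-one/rank-two reductions. As a high-level roadmap it is correct, though of course each step---especially the BD construction of the braiding and the verification over $\ZZ$ that $\widetilde{G}_{\ZZ}$ is flat and reductive---is substantial and not something you have actually carried out here. For the purposes of this paper, though, you are over-delivering: the intended answer is simply to cite \cite{MV07}.
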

  
  Here $\widetilde{G}_{\ZZ}$ is the group over $\ZZ$ whose category of representations is isomorphic as a tensor category to $P_{L^+G} (\GrG{G}, \ZZ)$. Mirkovi\'c and Vilonen construct $\widetilde{G}_{\ZZ}$ as a $\ZZ$-scheme so a result analogous to Theorem \ref{THM:MV:main} will hold, by base change, for coefficients in any Noetherian ring $R$ of finite global dimension. In particular, the complex group with dual root datum is $\widehat{G}$, and so $\widehat{G} \cong \widetilde{G} := \Spec{\CC} \times_{\ZZ} \widetilde{G}_{\ZZ}$ has a representation category isomorphic to $P_{L^+G}(\GrG{G}, \CC)$. Furthermore, $\widehat{G^{\sigma, \circ}} \cong \widetilde{G^{\sigma, \circ}} := \Spec{\CC} \times_{\ZZ} \widetilde{G^{\sigma, \circ}}_{\ZZ}$. 
  
  Note that $\widetilde{G}$ is naturally endowed with a maximal torus and Borel $\widetilde{T} \subset \widetilde{B} \subset \widetilde{G}$, identifiable using representations of $\widetilde{G}$. In particular, consider the representation 
  \[
   \tilde{\frakg}^{ss} :=
   Lie([\widetilde{G}, \widetilde{G}]) = 
   \bigoplus_i \HH^{\bullet}(\GrG{G}, IC_{\gamma_i^{\vee}}),
  \]
  where $i$ runs through the components of the Dynkin diagram of $G$, and $\gamma_i^{\vee}$ is the highest coroot in $\Phi^{\vee}_i$. Then we have a decomposition into weight spaces
  \[
   \tilde{\frakg}^{ss} =
   \bigoplus_i \left(H_c^0 (S_{w_0}^0, IC_{\gamma^{\vee}_i}) \oplus \bigoplus_{\alpha^{\vee} \in \Phi^{\vee}_i} H_c^{-2\langle{\rho,\alpha^{\vee}}\rangle} (S_{w_0}^{\alpha^{\vee}}, IC_{\gamma^{\vee}_i})\right) =
   \tilde{\frakg}^{ss} (0) \oplus \bigoplus_{\alpha^{\vee} \in \Phi^{\vee}} \tilde{\frakg}^{ss} (\alpha^{\vee}).
  \]
  For $g \in \widetilde{G}$, we can say $g \in \widetilde{T}$ if $g$ preserves all weight spaces of $\tilde{\frakg}^{ss}$, and $g \in \widetilde{B}$ if $g$ preserves the vector subspace
  \[
   \tilde{\frakg}^{ss} (0) \oplus \bigoplus_{\alpha^{\vee} \in \Phi^{\vee, +}} \tilde{\frakg}^{ss} (\alpha^{\vee}).
  \]
  Thus if we fix pinnings of $\widetilde{G}$ and $\widehat{G}$, we can identify the two groups uniquely.

  \begin{THM}[\cite{MV07} Corollary 13.2]
   \label{Cor:MV:13.2}
   Let $R$ be a Noetherian ring of finite global dimension. The $\lambda$-weight spaces $S_{\mu}(\lambda)$ and $W_{\mu} (\lambda)$ of $S_{\mu}$ and $W_{\mu}$, respectively, can both be canonically identified with the free $R$-module spanned by the irreducible components of $S_e^{\lambda} \cap \overline{\GrG{G}^{\mu}}$. In particular, the ranks of these modules can be given by the number of irreducible components of $S_e^{\lambda} \cap \overline{\GrG{G}^{\mu}}$.
  \end{THM}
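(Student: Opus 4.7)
The plan is to use the two equivalent descriptions of the weight functor $F_\lambda$ from Theorem \ref{THM:MV:3.5}, applied to the standard and costandard perverse sheaves $S_\mu$ and $W_\mu$. First I would realize these sheaves explicitly: with $j_\mu : \GrG{G}^\mu \hookrightarrow \overline{\GrG{G}^\mu} \hookrightarrow \GrG{G}$ the natural locally closed inclusion, set
\[
 S_\mu = (j_\mu)_! \, R[\dim \GrG{G}^\mu], \qquad
 W_\mu = (j_\mu)_* \, R[\dim \GrG{G}^\mu].
\]
For $R$ Noetherian of finite global dimension, these are $L^+G$-equivariant perverse sheaves, and under the Tannakian equivalence of Theorem \ref{THM:MV:main} they correspond to the Weyl and dual Weyl modules of $\widetilde{G}_R$ of highest weight $\mu$.

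For $S_\mu$ the computation is direct: apply $F_\lambda(-) = H_c^{2\langle\rho,\lambda\rangle}(S_e^\lambda, -)$ and invoke proper base change for the $!$-extension along the open inclusion $S_e^\lambda \cap \GrG{G}^\mu \hookrightarrow S_e^\lambda$. This reduces $F_\lambda(S_\mu)$, after the appropriate shift, to the top-degree compactly supported cohomology group of $S_e^\lambda \cap \GrG{G}^\mu$ with constant coefficients---exactly the group computed in Proposition \ref{Prop:MV:3.10}, which provides the canonical identification with the free $R$-module on $\Irr(S_e^\lambda \cap \overline{\GrG{G}^\mu})$ via fundamental classes of top-dimensional components. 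For $W_\mu$ I would use the second presentation $F_\lambda(-) = H_{S_{w_0}^\lambda}^{2\langle\rho,\lambda\rangle}(\GrG{G}, -)$; adjunction together with base change for $j_*$ along the closed-open square formed by $S_{w_0}^\lambda$ and $\GrG{G}^\mu$ reduces this to cohomology with constant coefficients on $S_{w_0}^\lambda \cap \GrG{G}^\mu$, which, by the Verdier dual of Proposition \ref{Prop:MV:3.10}, again gives a free $R$-module of the expected rank, with a canonical basis indexed by the top-dimensional components of the intersection.

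The hard part is that the two computations produce canonical bases attached to two a priori different sets of irreducible components: those of $S_e^\lambda \cap \overline{\GrG{G}^\mu}$ for $S_\mu$, and those of $S_{w_0}^\lambda \cap \overline{\GrG{G}^\mu}$ for $W_\mu$. The cleanest route to reconciling these---and to verifying that the identification for $W_\mu$ matches the statement verbatim---is to pass through the intermediate extension $IC_\mu$. The factorizations $S_\mu \twoheadrightarrow IC_\mu \hookrightarrow W_\mu$ in the perverse heart become, upon applying the exact functor $F_\lambda$, maps of free $R$-modules that are isomorphisms on each weight space, since each basis vector is detected on the smooth open intersection $S_e^\lambda \cap \GrG{G}^\mu$ rather than its boundary. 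Comparison with the canonical MV-cycle basis of $F_\lambda(IC_\mu)$ furnished by Proposition \ref{Prop:MV:3.10} then pins down simultaneously the canonical identifications of $S_\mu(\lambda)$ and $W_\mu(\lambda)$ with $R[\Irr(S_e^\lambda \cap \overline{\GrG{G}^\mu})]$, and the equality of ranks follows as an immediate corollary.
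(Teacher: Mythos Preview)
The paper does not prove this statement.  It appears in Appendix~\ref{app} as one of several results quoted without proof from \cite{MV07}; the paragraph following it is commentary explaining how $S_\mu$ and $W_\mu$ relate to $V_\mu$ over a field, not an argument.  So there is no proof in the paper to compare your sketch against.

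That said, your outline is reasonable and tracks the actual argument in \cite{MV07}: compute $F_\lambda$ of the $!$- and $*$-extensions directly using the two descriptions of the weight functor in Theorem~\ref{THM:MV:3.5}, reducing to top-degree compactly supported (resp.\ Borel--Moore) cohomology with constant coefficients on the smooth intersection $S_e^\lambda \cap \GrG{G}^\mu$ (resp.\ $S_{w_0}^\lambda \cap \GrG{G}^\mu$).  One caution about your final paragraph: you assert that the maps $F_\lambda(S_\mu) \to F_\lambda(IC_\mu) \to F_\lambda(W_\mu)$ are isomorphisms because ``each basis vector is detected on the smooth open intersection.''  Over a general Noetherian ring $R$ this is precisely the content of the corollary, not an input to it; in \cite{MV07} the argument instead shows directly that the composite $F_\lambda(S_\mu) \to F_\lambda(W_\mu)$ is an isomorphism of free modules (via duality and the parity vanishing of Theorem~\ref{THM:MV:3.5}), and deduces that both intermediate maps are isomorphisms.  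Your heuristic does not by itself establish that step.
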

  
  In the corollary above, $S_{\mu}$ and $W_{\mu}$ are canonical $R$-representations of $\widehat{G}$. In particular, taking coefficients in $R = \CC$ (or any other field), we have a natural map $S_{\mu} \to W_{\mu}$, bijective on underlying vector spaces, factoring through the irreducible highest weight representation $V_{\mu}$. As a result, the underlying vector space of $V_{\mu}$ has a basis indexed by $\Irr{S_e^{\lambda} \cap \overline{\GrG{G}^{\lambda}}}$.
  
  \begin{Lemma}[\cite{BG08} Proposition 5 (iii)]
   \label{Lemma:MV=AMV}
   Let $\nu \in X_*(T)$ be such that $\nu \ge 0$. If $\mu \in X_*(T)^+$ is sufficiently dominant, then $S_{w_0}^{\mu - \nu} \cap S_e^{\mu} = S_{w_0}^{\mu - \nu} \cap \GrG{G}^{\mu}$.
  \end{Lemma}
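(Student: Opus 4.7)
The plan is to invoke Theorem \ref{THM:Polytope} via a translation argument. Translating by $-\mu$ identifies AMV cycles of coweight $(\mu-\nu,\mu)$ with stable AMV cycles of coweight $(-\nu,0)$; since $\nu$ is fixed, there are only finitely many such stable AMV cycles (bounded by the Kostant partition function of $\nu$), so their moment polytopes $P_{A_0}$ all lie in a compact region $K \subset X_*(T)_{\RR}$ depending only on $\nu$. The set $\Conv{W \cdot \mu} - \mu$ expands in every negative-coroot direction as $\mu$ grows more dominant, so for $\mu$ sufficiently dominant $K \subset \Conv{W \cdot \mu} - \mu$, and then by Theorem \ref{THM:Polytope} every AMV cycle of coweight $(\mu-\nu,\mu)$ is in fact an MV cycle, contained in $\overline{\GrG{G}^{\mu}}$. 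In particular, $S_{w_0}^{\mu-\nu} \cap S_e^{\mu} \subset \overline{\GrG{G}^{\mu}}$.

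Combined with the identity $S_e^{\mu} \cap \overline{\GrG{G}^{\mu}} = S_e^{\mu} \cap \GrG{G}^{\mu}$ (valid for any dominant $\mu$: by Theorem \ref{THM:MV:3.2}(a), if $x \in S_e^{\mu} \cap \GrG{G}^{\lambda}$ with $\lambda$ dominant then $\varpi^{\mu} x_0 \in \overline{\GrG{G}^{\lambda}}$, forcing $\mu \le \lambda$, and together with $\lambda \le \mu$ this gives $\lambda = \mu$), we obtain
\[
S_{w_0}^{\mu-\nu} \cap S_e^{\mu} = S_{w_0}^{\mu-\nu} \cap S_e^{\mu} \cap \GrG{G}^{\mu} \subset S_{w_0}^{\mu-\nu} \cap \GrG{G}^{\mu},
\]
which is the first inclusion.

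For the reverse inclusion, both sides are equidimensional of dimension $\langle \rho, \nu \rangle$ by Theorem \ref{THM:MV} \ref{THM:MV:i} and \ref{THM:MV:ii}; since $S_{w_0}^{\mu-\nu} \cap S_e^{\mu}$ is open in $S_{w_0}^{\mu-\nu} \cap \GrG{G}^{\mu}$, their difference is a closed subvariety of strictly smaller dimension, which lies in $\coprod_{\eta} S_{w_0}^{\mu-\nu} \cap \GrG{G}^{\mu} \cap S_e^{\eta}$, where $\eta$ runs over the finite set of coweights in $\mu + X_*(T^{sc})$ with $\mu-\nu \le \eta < \mu$. It suffices to show each such piece is empty for $\mu$ sufficiently dominant. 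For $\eta = w\mu$ with $w \in W \setminus \{e\}$, sufficient dominance of $\mu$ forces $\mu - w\mu > \nu$, so $w\mu \not\ge \mu - \nu$, and then $S_{w_0}^{\mu-\nu} \cap S_e^{w\mu}$ is empty by the intersection criterion for semi-infinite cells.

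The main obstacle will be the case of interior $\eta \in \Conv{W\cdot\mu}$ not of the form $w\mu$. Here $\varpi^{\eta} x_0 \in \GrG{G}^{\eta'}$ for some dominant $\eta' < \mu$, so $\varpi^{\eta} x_0 \in \overline{\GrG{G}^{\mu}} \setminus \GrG{G}^{\mu}$. A hypothetical point $x$ of the triple intersection $S_{w_0}^{\mu-\nu} \cap \GrG{G}^{\mu} \cap S_e^{\eta}$ would have $T$-orbit closure with moment polytope sitting inside $\Conv{W\cdot\mu}$ and passing through both $\varpi^{\eta} x_0$ and $\varpi^{\mu-\nu} x_0$; combining this with the stable-range identification of the AMV cycle containing $x$ with an MV cycle (from the first paragraph), one should be able to rule out such $x$ via Proposition \ref{Prop:moment}, completing the proof.
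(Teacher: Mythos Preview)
The paper does not prove this lemma at all; it is simply quoted from \cite{BG08} Proposition 5(iii) and used as a black box in the proof of Theorem~\ref{THM:MV}~\ref{THM:MV:ii}. So there is no ``paper's proof'' to compare against, only your attempt.

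Your argument for the inclusion $S_{w_0}^{\mu-\nu}\cap S_e^{\mu}\subset S_{w_0}^{\mu-\nu}\cap\GrG{G}^{\mu}$ is sound: the moment-polytope/compactness argument via Theorem~\ref{THM:Polytope} correctly shows $S_{w_0}^{\mu-\nu}\cap S_e^{\mu}\subset\overline{\GrG{G}^{\mu}}$ for $\mu$ sufficiently dominant, and the identity $S_e^{\mu}\cap\overline{\GrG{G}^{\mu}}=S_e^{\mu}\cap\GrG{G}^{\mu}$ then finishes it.

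The reverse inclusion, however, has a genuine gap. First, a minor point: you invoke Theorem~\ref{THM:MV}~\ref{THM:MV:ii} for the dimension of $S_{w_0}^{\mu-\nu}\cap S_e^{\mu}$, but in this paper that statement is \emph{derived from} the very lemma you are proving, so the appeal is circular. More seriously, your fourth paragraph is not a proof but a hope. For an interior $\eta$ with $\mu-\nu\le\eta<\mu$ and $\eta\notin W\cdot\mu$, the intersection $S_e^{\eta}\cap\GrG{G}^{\mu}$ is genuinely nonempty (Theorem~\ref{THM:MV:3.2}(a)), and nothing you have written rules out a point $x\in S_{w_0}^{\mu-\nu}\cap\GrG{G}^{\mu}\cap S_e^{\eta}$. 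The moment polytope of $\overline{T\cdot x}$ would contain $\eta$ and $\mu-\nu$, both of which lie in $\Conv{W\cdot\mu}$, so Proposition~\ref{Prop:moment} gives no contradiction. Nor does ``the AMV cycle containing $x$'' help: that cycle has coweight $(\mu-\nu,\eta)$, not $(\mu-\nu,\mu)$, so the first-paragraph argument does not apply to it. Making $\mu$ more dominant does not shrink the set $\{\eta:\mu-\nu\le\eta<\mu\}$ either. The actual proof in \cite{BG08} proceeds by a direct analysis of the cell $\GrG{G}^{\mu}$ in terms of root subgroups near $\varpi^{\mu}x_0$, not by polytope considerations; your sketch does not supply a substitute for that analysis.
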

  
  Here a dominant cocharacter $\mu$ may be considered ``sufficiently dominant'' if, for all simple roots $\alpha \in \Pi$, we have $\langle{\alpha, \mu}\rangle \ge N$, where $N$ is some positive integer depending on the group $G$ and the cocharacter $\nu$.
  
  These are all the statements necessary to state and prove Theorem \ref{THM:MV}:
  
  \begin{THM}
    \label{app:THM:MV}
    Let $\mu$ be a dominant cocharacter, and let $\lambda \in Wt(\mu)$.
    \begin{enumerate}[label=\roman*.]
     \item
      \label{app:THM:MV:i}
      $S_{w_0}^{\lambda} \cap \overline{\GrG{G}^{\mu}}$ is equidimensional, and $\dimp{S_{w_0}^{\lambda} \cap \overline{\GrG{G}^{\mu}}} = \langle \rho, \mu - \lambda \rangle$
      
     \item
      \label{app:THM:MV:ii}
      $S_{w_0}^{\lambda} \cap S_e^{\mu}$ is equidimensional, and $\dimp{S_{w_0}^{\lambda} \cap S_e^{\mu}} = \langle \rho, \mu - \lambda \rangle$
     
     \item
      \label{app:THM:MV:iii}
      $\displaystyle \HH^{\bullet}(\GrG{G}, IC_{\mu}) = \bigoplus_{\lambda \in Wt(\mu)} H_c^{-2\langle \rho, \lambda\rangle} (S_{w_0}^{\lambda}, IC_{\mu}) =
       V_{\mu}$
      
     \item
      \label{app:THM:MV:iv}
      $\displaystyle H_c^{-2\langle \rho, \lambda\rangle}(S_{w_0}^{\lambda}, IC_{\mu}) =
       \bigoplus_{A \in \Irr{S_{w_0}^{\lambda} \cap \overline{\GrG{G}^{\mu}}}} \CC[A] =
       V_{\mu}(\lambda)$.
       
      \item
       \label{app:THM:MV:v}
       $P_{L^+G} (\GrG{G}, \ZZ)$ is isomorphic as a tensor category to $\Rep{\ZZ}{\widehat{G}}$. 
     \end{enumerate}
   \end{THM}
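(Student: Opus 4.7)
The plan is to derive each of the five parts from the Mirkovi\'c--Vilonen results just listed in the appendix, by making the necessary dualities and translations explicit.

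For part \ref{app:THM:MV:i}, I would pass from the dominant representative $\mu$ to the anti-dominant representative $w_0\mu$, using that $\overline{\GrG{G}^{\mu}} = \overline{\GrG{G}^{w_0\mu}}$ because Schubert varieties depend only on the Weyl orbit. Theorem \ref{THM:MV:3.2}(b) then produces pure dimension $-\langle \rho, w_0\mu + \lambda\rangle$, which collapses to $\langle \rho, \mu - \lambda\rangle$ since $w_0^{-1}\rho = -\rho$. For part \ref{app:THM:MV:ii}, the strategy is to translate by a large cocharacter so as to invoke Lemma \ref{Lemma:MV=AMV}. Multiplication by $\varpi^\nu$ is an automorphism of $\GrG{G}$ sending $S_w^\eta$ to $S_w^{\eta+\nu}$, so $S_{w_0}^\lambda \cap S_e^\mu \cong S_{w_0}^{\lambda+\nu} \cap S_e^{\mu+\nu}$. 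Choosing $\nu$ so that $\mu+\nu$ is sufficiently dominant with respect to the positive cocharacter $(\mu+\nu)-(\lambda+\nu) = \mu-\lambda$ gives equality with $S_{w_0}^{\lambda+\nu} \cap \GrG{G}^{\mu+\nu}$, which is open and dense in the Schubert-variety intersection of part \ref{app:THM:MV:i}; a dimension count then forces pure dimension $\langle\rho, \mu-\lambda\rangle$.

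For parts \ref{app:THM:MV:iii} and \ref{app:THM:MV:iv}, I would combine Theorem \ref{THM:MV:3.6} with the $w_0$-symmetry on $\GrG{G}$, which exchanges $S_e^\lambda$ with $S_{w_0}^{w_0\lambda}$ and preserves each Schubert variety $\overline{\GrG{G}^\mu}$; this transports the semi-infinite decomposition into the parallel form $\HH^{\bullet}(\GrG{G}, IC_\mu) = \bigoplus_\lambda H_c^{-2\langle\rho,\lambda\rangle}(S_{w_0}^\lambda, IC_\mu)$, establishing \ref{app:THM:MV:iii}. Proposition \ref{Prop:MV:3.10} gives an identification of $H_c^{2\langle\rho, \lambda\rangle}(S_e^\lambda, IC_\mu)$ with the free $\CC$-module on $\Irr{S_e^\lambda \cap \overline{\GrG{G}^\mu}}$; transporting by the same $w_0$-symmetry yields the corresponding identification of $H_c^{-2\langle\rho,\lambda\rangle}(S_{w_0}^\lambda, IC_\mu)$ with $\CC[\Irr{S_{w_0}^\lambda \cap \overline{\GrG{G}^\mu}}]$. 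Corollary \ref{Cor:MV:13.2} then identifies this summand further with $V_\mu(\lambda)$, once one checks that the $\HH^\bullet$-decomposition of \ref{app:THM:MV:iii} coincides with the weight decomposition under the dual torus $\widetilde{T}$. Finally, part \ref{app:THM:MV:v} is Theorem \ref{THM:MV:main} specialized to coefficients in $\ZZ$, with the tensor structure on $P_{L^+G}(\GrG{G},\ZZ)$ provided by convolution.

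The main obstacle will be the bookkeeping of degree shifts: passing between $S_e^\lambda$ and $S_{w_0}^\lambda$ via $w_0$ reverses the sign of $\langle\rho,\cdot\rangle$, and the perverse shift $[2\langle\rho,\mu\rangle]$ built into $IC_\mu$ must be tracked so that the claimed cohomological degrees match. This is particularly delicate in \ref{app:THM:MV:iv}, where one has to reconcile the ``$+$'' convention of Proposition \ref{Prop:MV:3.10} with the ``$-$'' convention of \ref{app:THM:MV:iii}, and verify that the resulting $\CC$-basis indexed by MV cycles is compatible with the $\widetilde{T}$-weight grading identified in Corollary \ref{Cor:MV:13.2}.
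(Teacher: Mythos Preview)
Your proposal is essentially correct and follows the paper's strategy: part \ref{app:THM:MV:i} from Theorem \ref{THM:MV:3.2}(b) via the anti-dominant representative, part \ref{app:THM:MV:ii} by translation plus Lemma \ref{Lemma:MV=AMV}, parts \ref{app:THM:MV:iii}--\ref{app:THM:MV:iv} from Theorems \ref{THM:MV:3.5}--\ref{THM:MV:3.6}, Proposition \ref{Prop:MV:3.10}, and Corollary \ref{Cor:MV:13.2}, and part \ref{app:THM:MV:v} as a restatement of Theorem \ref{THM:MV:main}.

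There is one point where your mechanism differs from the paper's, and it matters. For the passage from $S_e^{\nu}$ to $S_{w_0}^{\nu}$, the paper invokes ``symmetry between our choice of Borel and its opposite'': rerunning the Mirkovi\'c--Vilonen construction with $B^-$ in place of $B$ replaces $\rho$ by $-\rho$ and $S_e^{\nu}$ by $S_{w_0}^{\nu}$ \emph{for the same $\nu$}, yielding directly
\[
 H_c^{2\langle\rho,\nu\rangle}(S_e^{\nu}, IC_\mu) \;=\; H_c^{-2\langle\rho,\nu\rangle}(S_{w_0}^{\nu}, IC_\mu) \;=\; F_\nu(IC_\mu) \;=\; V_\mu(\nu),
\]
since the weight functor $F_\nu$ is intrinsic to $\widetilde{T}$ and does not depend on which Borel is used to compute it. Your proposed device is instead the geometric $w_0$-action on $\GrG{G}$, which sends $S_e^{\lambda}$ to $S_{w_0}^{w_0\lambda}$. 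That transports the decomposition correctly (giving \ref{app:THM:MV:iii}) and matches irreducible components bijectively (giving the first equality in \ref{app:THM:MV:iv}), but it intertwines the $\widetilde{T}$-action through $w_0$: it would identify $H_c^{-2\langle\rho,\lambda\rangle}(S_{w_0}^{\lambda}, IC_\mu)$ with $V_\mu(w_0\lambda)$ rather than $V_\mu(\lambda)$. You rightly flag the weight-compatibility as the delicate point, but the $w_0$-action alone cannot supply it; you need exactly the Borel-independence of $F_\nu$ that the paper uses.
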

   
   \begin{proof}
    Statement \ref{app:THM:MV:i} follows immediately from \ref{THM:MV:3.2} (b). Note that $-\mu$ is anti-dominant exactly when $\mu$ is dominant.
    
    Statement \ref{app:THM:MV:ii} is not a direct consequence of any statement in \cite{MV07}, but it is well-known. One way to see it follows from Lemma \ref{Lemma:MV=AMV} and statement \ref{app:THM:MV:i}. Indeed, note that for $\nu \in X_*(T)$, the translation of semi-infinite cells $\nu: S_w^{\eta} \to S_w^{\eta + \nu}$ is an isomorphism of ind-schemes. In particular,
    \[
     \dimp{S_{w_0}^{\lambda} \cap S_e^{\mu}} =
     \dimp{S_{w_0}^{\lambda + \nu} \cap S_e^{\mu + \nu}}.
    \]
    So $\dimp{S_{w_0}^{\lambda} \cap S_e^{\mu}} = \dimp{S_{w_0}^{\lambda + n \rho^{\vee}} \cap S_e^{\mu + n \rho^{\vee}}}$ for all integers $n$. For sufficiently large $n$, the character $\mu + n \rho^{\vee}$ is sufficiently dominant, with respect to $G$ and $\mu - \lambda$, to satisfy the hypotheses of Lemma \ref{Lemma:MV=AMV}. Therefore
    \[
     \dimp{S_{w_0}^{\lambda} \cap S_e^{\mu}} =
     \dimp{S_{w_0}^{\lambda + n \rho^{\vee}} \cap S_e^{\mu + n \rho^{\vee}}} =
     \dimp{S_{w_0}^{\lambda + n \rho^{\vee}} \cap \GrG{G}^{\mu+ n \rho^{\vee}}} =
     \langle{\rho, \mu - \lambda}\rangle.
    \]
    
    Statements \ref{app:THM:MV:iii} and \ref{app:THM:MV:iv} are closely tied together. Theorem \ref{THM:MV:3.5} tells us that the global cohomology of $L^+G$-equivariant perverse sheaves decomposes canonically as a direct sum of cohomology groups with compact support, taken on semi-infinite cells. Specifically, for $\Aa \in P_{L^+G} (\GrG{G})$,
    \[
     \HH^{\bullet}(\GrG{G}, \Aa) =
     \bigoplus_{\nu \in X_*(T)} H_c ^{2 \langle{\rho, \nu}\rangle} (S_e^{\nu}, \Aa).
    \]
    By symmetry between our choice of Borel and its opposite, we have
    \begin{equation}
     \label{eq:MV:negsign}
     H_c^{2\langle{\rho, \nu}\rangle} (S_e^{\nu}, IC_{\mu}) =
     H_c^{-2\langle{\rho, \nu}\rangle} (S_{w_0}^{\nu}, IC_{\mu}).
    \end{equation}
    And of course by emptiness of the intersection of $S_{w_0}^{\lambda} \cap \GrG{G}^{\mu}$ for $\lambda \not\in Wt(\mu)$, the only $\lambda$ appearing in the direct sum for $\HH^{\bullet}$ are those contained in $Wt(\mu)$. Thus the first equality of \ref{THM:MV:iii} follows. And the first equality of \ref{THM:MV:iv} follows from Proposition \ref{Prop:MV:3.10}.
    
    The second equality of \ref{THM:MV:iv} follows from Theorem \ref{Cor:MV:13.2} and equation (\ref{eq:MV:negsign}), which in turn implies the second equality of \ref{THM:MV:iii}.
    
    Statement \ref{app:THM:MV:v} is a restatement of Theorem \ref{THM:MV:main}.
   \end{proof}
   
  \printbibliography
\end{document}